\definecolor{clemson-orange}{RGB}{234,106,32}
\definecolor{chicago-maroon}{RGB}{128,0,0}
\definecolor{cincinnati-red}{RGB}{190,0,0}
\definecolor{soft-cyan}{RGB}{68,85,90}
\newcolumntype{L}[1]{>{\raggedright\let\newline\\\arraybackslash\hspace{0pt}}m{#1}}
\newcolumntype{C}[1]{>{\centering\let\newline\\\arraybackslash\hspace{0pt}}m{#1}}
\newcolumntype{R}[1]{>{\raggedleft\let\newline\\\arraybackslash\hspace{0pt}}m{#1}}
\newcommand{\bb}{\mathbb}
\newcommand{\R}{\bb R}
\newcommand{\Z}{{\bb Z}}
\newcommand{\N}{{\bb N}}
\theoremstyle{definition}
\newtheorem{theorem}{Theorem}[section]
\newtheorem{lemma}[theorem]{Lemma}
\newtheorem{corollary}[theorem]{Corollary}
\newtheorem{claim}{Claim}
\newtheorem{definition}[theorem]{Definition}
\newtheorem{remark}[theorem]{Remark}
\newtheorem{example}[theorem]{Example}
\newtheorem*{question*}{Question}
\DeclareMathOperator*{\conv}{conv}
\DeclareMathOperator*{\poly}{poly}
\newcommand{\D}{\mathcal{D}}
\newcommand{\cL}{\mathcal{L}}
\numberwithin{equation}{section}  % If you number theorems, etc. within sections,
\title{Complexity of branch-and-bound and cutting planes in mixed-integer optimization}
\author{Amitabh Basu\thanks{Department of Applied Mathematics and Statistics, Johns Hopkins University, Baltimore, MD, USA ({\tt basu.amitabh@jhu.edu}, {\tt hjiang32@jhu.edu}).} 
\and Michele Conforti\thanks{Dipartimento di Matematica ``Tullio Levi-Civita'', Universit\`a degli Studi Padova, Italy ({\tt conforti@math.unipd.it}, {\tt disumma@math.unipd.it}).}
\and Marco Di Summa\footnotemark[3]
\and Hongyi Jiang\footnotemark[2]}
\begin{document}

\maketitle

\begin{abstract}We investigate the theoretical complexity of branch-and-bound (BB) and cutting plane (CP) algorithms for mixed-integer optimization. In particular, we study the relative efficiency of BB and CP, when both are based on the same family of disjunctions. We extend a result of Dash to the nonlinear setting which shows that for convex 0/1 problems, CP does at least as well as BB, with variable disjunctions. We sharpen this by giving instances of the stable set problem where we can provably establish that CP does exponentially better than BB. We further show that if one moves away from 0/1 sets, this advantage of CP over BB disappears; there are examples where BB finishes in $O(1)$ time, but CP takes infinitely long to prove optimality, and exponentially long to get to arbitrarily close to the optimal value (for variable disjunctions). We next show that if the dimension is considered a fixed constant, then the situation reverses and BB does at least as well as CP (up to a polynomial blow up), no matter which family of disjunctions is used. This is also complemented by examples where this gap is exponential (in the size of the input data).\end{abstract}

\section{Introduction}\label{sec:intro}

In this paper, we consider the following optimization problem:

\begin{equation}\label{eq:linear-obj}
\begin{array}{rcll}
\sup\limits_{x \in \R^n} & \langle c, x \rangle && \\
\textrm{s.t.} & x &\in & C \\
& x &\in & S
\end{array}
\end{equation}
where $C$ is a closed, convex subset of $\R^n$ and $S$ is a closed, possibly non-convex, subset of $\R^n$. This model is a formal way to ``decompose" the feasible region into the ``convex" constraints $C$ and the ``non-convexities" $S$ of the problem at hand. The bulk of this paper will be concerned with non-convexity coming from integrality constraints, i.e., $S := \Z^{n_1} \times \R^{n_2}$, where $n_1 + n_2 = n$; the special case $n_2 = 0$ will be referred to as a {\em pure-integer lattice} and the general case as a {\em mixed-integer lattice} ($n_1=0$ gives us standard continuous convex optimization). However, some of the ideas put forward apply to other non-convexities like sparsity or complementarity constraints as well (see Theorem \ref{thm:BB<=CP} below, where the only assumption on $S$ is closedness).

\paragraph{Cutting Planes and Branch-and-Bound.}
Cutting planes were first successfully employed for solving combinatorial problems with special structure, such as the traveling salesman problem~\cite{applegate2011traveling,grotschel1979symmetric, grotschel1979symmetric-2,grotschel1985polyhedral,grotschel1986clique, chvatal1973edmonds-II, dantzig1954solution,dantzig1959linear, cornuejols1982travelling, cornuejols1985travelling, cornuejols1983halin}, the independent set problem~\cite{nemhauser1974properties,padberg1973facial,chvatal1975certain,trotter1975class}, the knapsack problem~\cite{balas1975facets,wolsey1975faces}, amongst others. For general mixed-integer problems, cutting plane ideas were introduced by Gomory~\cite{gomory1960algorithm,MR0102437}, but did not make any practical impact until the mid 1990s~\cite{balas96gomory}. Since then, cutting planes have been cited as the most significant component of modern solvers~\cite{Bixby}, where they are combined with a systematic enumeration scheme called {\em Branch-and-Bound}. Both of these ideas are based on the following notion.
\medskip

\begin{definition}\label{def:disj} Given a closed subset $S \subseteq \R^n$, a {\em disjunction covering $S$} is a finite union of closed convex sets $D = Q_1 \cup \ldots \cup Q_k$ such that $S \subseteq D$. Such a union is also called a {\em valid} disjunction.
\end{definition}
\medskip

Observe that the feasible region of~\eqref{eq:linear-obj} is always contained in any valid disjunction $D$. This leads to a fundamental algorithmic idea: one iteratively refines the initial convex ``relaxation" $C$ by intersecting it with valid disjunctions. More formally, a {\em cutting plane for $C$ derived from a disjunction} $D$ is any halfspace $H\subseteq \R^n$ such that $C \cap D \subseteq H$. The point is that the feasible region $C\cap S \subseteq C\cap D \subseteq C \cap H$.

Thus, the convex region $C$ is refined or updated to a tighter convex set $C \cap H$. The hope is that iterating this process with clever choices of disjunctions and cutting planes derived from them will converge to the convex hull of $C \cap S$, where the problem can be solved with standard convex optimization tools. Since the objective is linear, solving over the convex hull suffices.
\bigskip

\begin{figure}[htbp]
\begin{multicols}{2}
\begin{center}\includegraphics[width=0.3\textwidth]{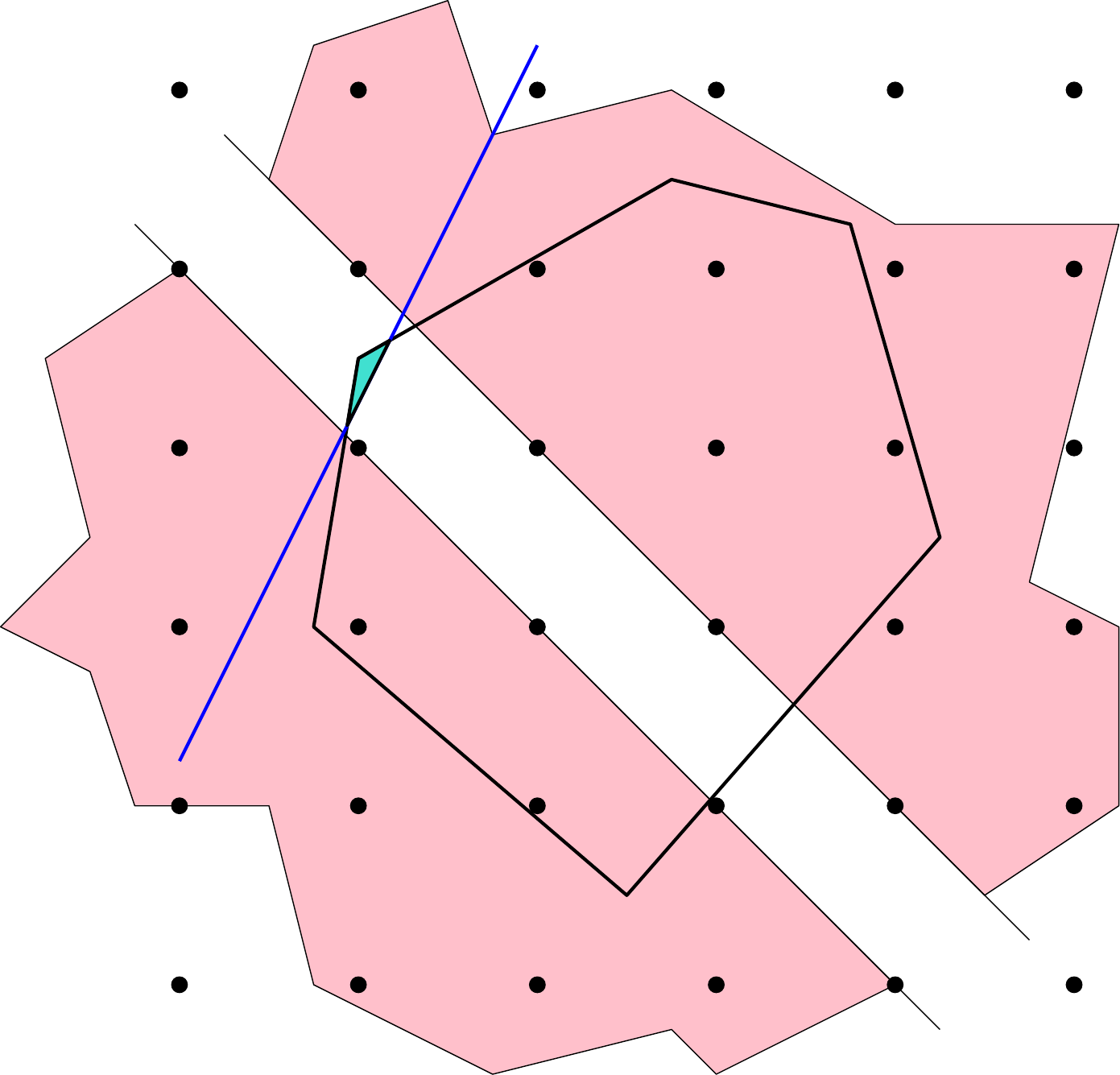}\end{center}
\columnbreak
\begin{center}\includegraphics[width=0.27\textwidth]{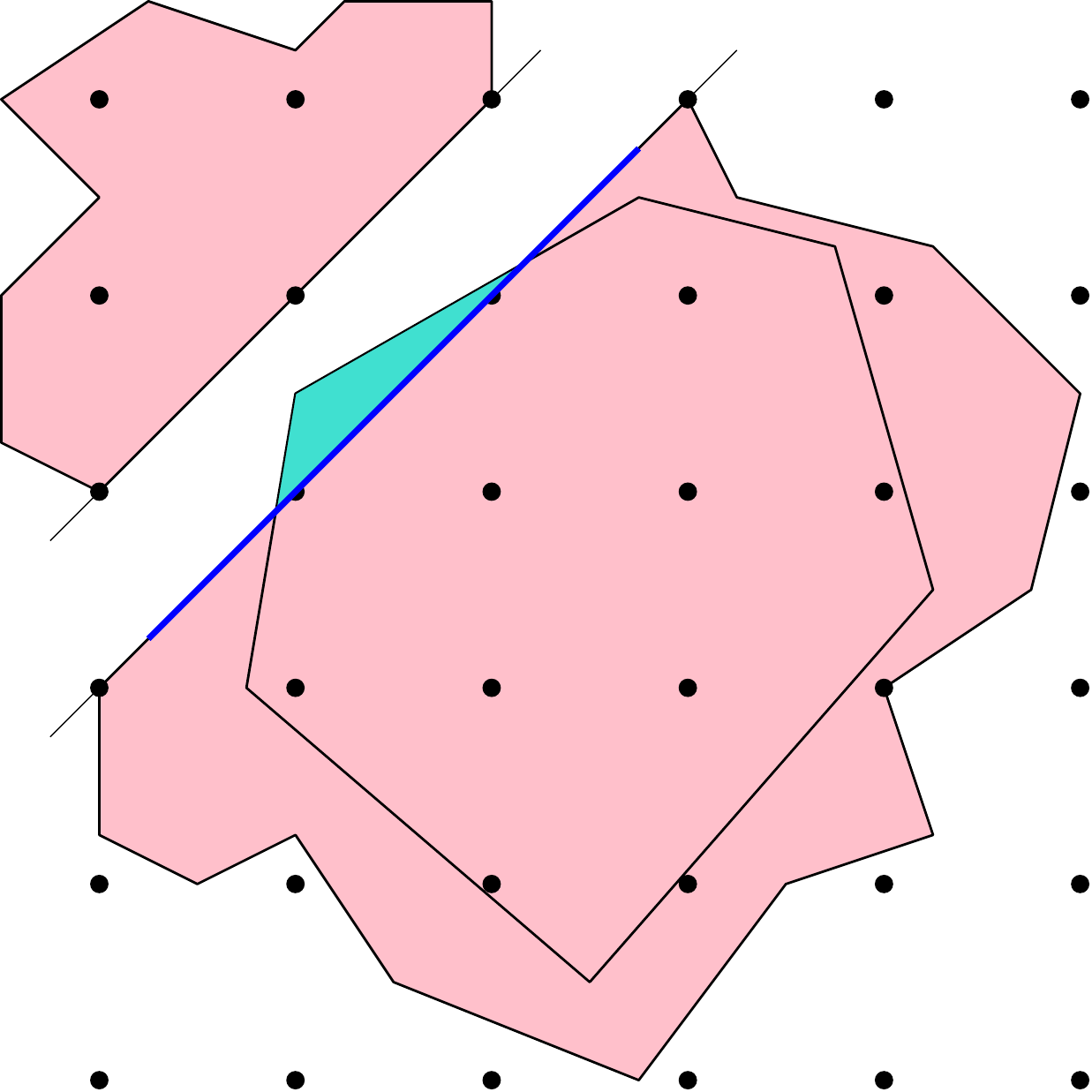}\end{center}
\end{multicols}
\caption{Two examples of cutting planes based on split disjunctions for $S=\Z^n$. The convex region $C$ is a polytope. The dashed line shows the bounding hyperplane of the cutting plane $H$; the dark triangle in both cases is the part of $C$ that is ``shaved off", i.e. $C \setminus H$. }\label{fig:splits}
\end{figure}

\begin{example}\label{ex:splits}
{\em Split disjunctions} for the mixed-integer lattice were introduced by Cook, Kannan and Schrijver~\cite{cook-kannan-schrijver}. These are disjunctions that are a union of two rational halfspaces that cover the mixed-integer lattice. Note that this implies the bounding hyperplanes of the two halfspaces have to be parallel. See Figure~\ref{fig:splits} where the disjunctions are colored in light gray. The right figure illustrates an example of {\em Chv\'atal-Gomory cuts}~\cite{chvatal1973edmonds}, which are cutting planes derived from split disjunctions where one side of the disjunction does not intersect the convex set $C$. 
Most cutting planes used in combinatorial optimization are Chv\'atal-Gomory cuts. For example, in the Maximum Matching problem the so-called {\em odd set} or {\em blossom inequalities} are an example of Chv\'atal-Gomory cuts~\cite{edmonds1965maximum,edmonds1965paths,chvatal1973edmonds}, where $C$ is the standard polyhedral formulation for maximum matching with 0/1 variables for the edges of the graph.
\end{example} 

We will now formally define algorithms based on cutting planes and branch-and-bound, assuming access to a continuous, convex optimization solver. Below, when we use the word ``solve" to process a continuous convex optimization problem, we assume that the output of such a solver will either  (i) report infeasibility, or (ii) report that a maximizer does not exist either because the problem is unbounded or because the supremum is not attained, or (iii) report an optimal solution to the convex optimization problem.

\begin{definition}\label{def:CP-algo} Cutting plane (CP) algorithm based on a family $\mathcal{D}$ of disjunctions:
\begin{enumerate}
\item Initialize $C_0 := C$.
\item For $i = 0, 1, \ldots$
\begin{enumerate}
\item[(a)] Solve $x^i \in \arg\max\{ \langle c, x\rangle: x \in C_i\}$. If $C_i = \emptyset$, report ``INFEASIBLE" and STOP. If no maximizer exists, report ``EXCEPTION" and STOP.
\item[(b)] If $x^i \in S$, report $x^i$ as OPTIMAL and STOP. Else, {\bf choose} a disjunction $D \in \mathcal{D}$ and a cutting plane $H$ for $C_i$ derived from $D$ such that $x^i \not\in H$. Set $C_{i+1} := C_i \cap H$. If no cutting plane can be derived, report ``NO CUTTING PLANE" and STOP.
\end{enumerate}
\end{enumerate}
\end{definition}
The outputs ``EXCEPTION'' and ``NO CUTTING PLANE'' correspond to situations in which the CP algorithm stops without finding an optimal solution to the given problem. Note however that if, e.g., $C$ is compact, then the output ``EXCEPTION'' will never occur. Also, if $S$ is the mixed-integer lattice and $\mathcal D$ is the family of split disjunctions, the output ``NO CUTTING PLANE'' will never occur if $x^i$ is an extreme point of $C_i$. We call the sequence of operations in Definition~\ref{def:CP-algo} an ``algorithm", even though it may not terminate in finitely many iterations.

In the framework above, at every iteration there are usually many possibilities for the choice of the disjunction from $\mathcal{D}$, and then many possibilities for the choice of the cutting plane from the chosen disjunction. Specific strategies for these two choices give a particular instance of a cutting plane algorithm based on the family of disjunctions $\mathcal{D}$. 

Disjunctions can also be used to simply search, as opposed to convexification by cutting planes. This leads to the idea of {\em branching} with pruning by bounds.
\medskip

\begin{definition}\label{def:BB-algo} Branch-and-bound (BB) algorithm based on a family $\mathcal{D}$ of disjunctions: \begin{enumerate}
\item Initialize a list $\mathcal{L} := \{C\}$, $LB := -\infty$.
\item While $\mathcal{L} \neq \emptyset$
\begin{enumerate}
\item[(a)] {\bf Choose} $N \in \mathcal{L}$ and update $\mathcal{L} := \mathcal{L}\setminus \{N\}$. Solve $\bar x \in \arg\max\{ \langle c, x\rangle: x \in N\}$. If $N= \emptyset$, continue the loop. If no maximizer exists, report ``EXCEPTION" and STOP.
\item[(b)] If $\langle c,\bar x\rangle > LB$, then check if $\bar x \in S$. If yes, update $LB := \langle c,\bar x\rangle$; if no, {\bf choose} a disjunction $D = (Q_1 \cup \ldots \cup Q_k) \in \mathcal{D}$ such that $\bar x \not\in D\cap N$ and update $\mathcal{L} := \mathcal{L} \cup \bigcup_{i=1}^k(Q_i\cap N)$. If no such disjunction exists, report ``NO DISJUNCTION FOUND" and STOP.
\end{enumerate}
\item If $LB=-\infty$ report ``INFEASIBLE". Else, return $\bar x \in S$ corresponding to current $LB$ as OPTIMAL and STOP.
\end{enumerate}
\end{definition}

The idea is to maintain a list $\mathcal{L}$ of {\em convex subsets} of the initial convex set $C$ which are guaranteed to contain the optimal point. $LB$ stores the objective value of the best feasible solution found so far, which is a lower bound for the optimal value. In the worst case, one will go through each integer point (or connected component of $S$) in $C$.

Cutting planes and branch-and-bound can be combined into a single algorithm as well that generalizes Definitions~\ref{def:CP-algo} and~\ref{def:BB-algo}.

\begin{definition} A {\em branch-and-cut (BC)} algorithm is a version of the algorithm defined in Definition~\ref{def:BB-algo}, where there is an additional decision point in Step 2 (b), where one decides if one should add a cutting plane or branch as described in the step.
\end{definition}

The literature on the complexity of cutting plane methods has often focused on the concepts of {\em cutting plane proof} and {\em cutting plane rank}, which are closely related to the efficiency of a cutting plane algorithm (Definition~\ref{def:CP-algo}).
\medskip

\begin{definition}\label{def:CP-proof-rank} Let $C\subseteq \R^n$ be a convex set, let $S\subseteq \R^n$ model the non-convexity, and let $\mathcal{D}$ be a family of valid disjunctions for $S$ (see Definition~\ref{def:disj}).
\begin{enumerate}
\item Suppose the inequality $\langle c, x \rangle \leq \gamma$ is valid for all points in $C \cap S.$ A {\em cutting plane (CP) proof based on $\D$ with respect to $C\cap S$} of length $N \in \N$ for this inequality is a sequence of halfspaces $H_1, H_2, \ldots, H_N$ such that 1) there exists a sequence of disjunctions $D_1, D_2, \ldots, D_N \in \mathcal{D}$ such that for each $i=0, \ldots, N-1$, $H_{i+1}$ is a cutting plane derived from the disjunction $D_{i+1}$ applied to $C_i := C \cap \bigcap_{j=1}^{i} H_j,$\footnote{We make the standard notational convention that the trivial intersection $\bigcap_{i=1}^0 X_i = \R^n$.} and 2) $\langle c, x \rangle \leq \gamma$ is valid for all points in $C_N$.
\item The {\em $\mathcal{D}$-closure} is defined as the intersection of $C$ with all cutting planes that can be derived from all disjunctions in $\mathcal{D}$, which we denote by $CP_\D(C)$. We can iterate this operator $N\geq 1$ times, which we will denote by $CP^N_\D(C) = \underbrace{CP_\D(CP_\D( \ldots CP_\D}_{N \textrm{times}}(C) \ldots ))$. The {\em $\mathcal{D}$-rank} of a valid inequality $\langle c, x \rangle \leq \gamma$ for $C \cap S$ is the smallest $N \in \N$ such that this inequality is valid for $CP^N_\D(C)$ (the rank is $\infty$ if the inequality is invalid for all $CP^N_\D(C)$, $N\geq 1$). \end{enumerate}
\end{definition}

The relation to the notion of a cutting plane algorithm (Definition~\ref{def:CP-algo}) is simple: Suppose an instance of~\eqref{eq:linear-obj} has optimal value $OPT$ and a cutting plane algorithm ends with this value in $N$ iterations. Clearly, the cutting planes generated during the algorithm give a cutting plane {\em proof} of length $N$ for the inequality $\langle c, x \rangle \leq OPT$. Thus, establishing lower bounds on the length of cutting plane proofs are a way to derive lower bounds on the efficiency of a cutting plane algorithm. Also, proving a lower bound on the {\em rank} of an inequality gives a lower bound on the cutting plane proof length. See~\cite{dash2002exponential,dash2005exponential,dash2010complexity,chvatal1989cutting,chvatal1984cutting,chvatal1980hard,cook2001matrix,bockmayr1999chvatal,eisenbrand2003bounds,rothvoss20130} for a sample of upper and lower bound results on rank and proof length. 

One can define analogous concepts for branch-and-bound. In fact, we will now define a generalization of the notion of cutting plane proof to branch-and-cut procedures.
\medskip

\begin{definition}\label{def:BC-proof-rank} Let $C\subseteq \R^n$ be a convex set, let $S\subseteq \R^n$ model the non-convexity, and let $\mathcal{D}$ be a family of valid disjunctions for $S$ (see Definition~\ref{def:disj}). Suppose the inequality $\langle c, x \rangle \leq \gamma$ is valid for all points in $C \cap S.$ A {\em branch-and-cut (BC) proof based on $\D$ with respect to $C\cap S$} for this inequality is a rooted tree such that 1) every node represents a convex subset of $C$, 2) the root node represents $C$ itself, 3) every non-leaf node is labeled as a {\em cutting} node or a {\em branching} node, 4) any cutting node representing $C' \subseteq C$ has exactly one child and there exists a disjunction $D \in \D$ and a cutting plane $H$ for $C'$ derived from $D$ such that the child represents $C' \cap H$, 5) for any branching node representing $C' \subseteq C$, there exists a disjunction in $\D$ given by $D = Q_1 \cup \ldots\cup Q_k$ such that the children of this node represent the sets $C' \cap Q_i$, $i=1, \ldots, k$, and 6) $\langle c, x \rangle \leq \gamma$ is valid for all the subsets represented by the leaves of the tree. The {\em size} of the BC proof is the total number of nodes in the tree minus 1 (we exclude the root representing $C$). \end{definition}

\begin{remark} Note that if all nodes in a branch-and-cut proof are cutting nodes, then we have a cutting plane proof and notions of length and size coincide. If all nodes are branching nodes, then we say we have a {\em branch-and-bound (BB) proof}.
\end{remark}

As with the case of cutting planes, a branch-and-bound/branch-and-cut algorithm also provides a branch-and-bound/branch-and-cut proof of optimality with size equal to the number of iterations of the algorithm. Also, note that by just changing the stopping criterion in the algorithms, one can use them to derive upper bounds on the objective function value, i.e., proving validity of $\langle c, x \rangle \leq \gamma$, instead of stopping at optimality or infeasibility. Consequently, CP/BB/BC proofs can be seen as a way to prove unsatisfiability in logic. This led to a rich literature at the intersection of optimization and proof complexity~\cite{beame_et_al:LIPIcs:2018:8341,dadush2020complexity,bonet1997lower,razborov2017width,impagliazzo1994upper,buss1996cutting,cook1987complexity,goerdt1990cutting,goerdt1991cutting,clote1992cutting,pudlak1997lower,pudlak1999complexity,krajivcek1998discretely,kojevnikov2007improved,grigoriev2002complexity}, to cite a few.

The difference between a CP/BB/BC algorithm and a CP/BB/BC proof of optimality (or validity of an upper bound) is that the algorithm is restricted to choose a disjunction (and a cutting plane based on this disjunction) that ``shaves off" the current optimal solution. There is no such restriction on a proof. In fact, there are two-dimensional examples where every CP algorithm based on a standard family of disjunctions converges to a value that is 12.5\% larger than the optimal value even after infinitely many iterations, while there are finite length CP proofs based on the same family that can certify the validity of any upper bound {\em strictly} bigger than the optimal value~\cite{owen2001disjunctive}.

\section{Main Results} We summarize our main results in Table~\ref{tab:CP-BB-results} and the discussion below.
\medskip

\begin{table}[htbp]
\begin{tabular}{|C{1.3cm}|L{3.3cm}|L{3.2cm}|L{3.3cm}|L{3.2cm}|}
\hline
& 0/1 convex sets, variable disjunctions & general convex sets, variable disjunctions & 0/1 convex sets, split disjunctions & general convex sets, split disjunctions \\
\hline
Variable dim. & CP $\leq$ BB {(Thm.~\ref{thm:CP<=BB})}& BB $O(1)$ vs. CP~$\infty$ {(Thm.~\ref{thm:BB<<CP})}& Open question & Open question \\
& & & & \\
 & CP $\poly(n)$ vs. BB~$\exp(n)$ {(Thm.~\ref{thm:CP<<BB})} & CP $\poly(n)$ vs. BB~$\exp(n)$  {(Thm.~\ref{thm:CP<<BB})}& &  \\
%  & &&&  \\
\hline
Fixed dim. & { BB and CP~$O(1)$ (Thm.~\ref{thm:fixed-dim-0/1})}& BB $\leq$ $\poly$(CP) {(Thm.~\ref{thm:BB<=CP})}& { BB and CP $O(1)$ (Thm.~\ref{thm:fixed-dim-0/1})} & BB $\leq$ $\poly$(CP) {(Thm.~\ref{thm:BB<=CP})}\\
& & & & \\
 &  & BB $O(1)$ vs. CP~$\exp(data)$ {(Thm.~\ref{thm:BB-con-CP-exp})}& & BB $O(1)$ vs. CP~$\poly(data)$ {(Remark~\ref{rem:CP-poly})}\\
\hline
\end{tabular}
\vspace{0.2cm}
\caption{The two rows distinguish between the cases when the dimension $n$ is considered variable or a fixed constant in the bounds reported. The columns consider combinations of different families of convex sets $C$ -- ``0/1 convex sets" or ``general convex sets" -- and two popular disjunction families in integer optimization. ``BB"/``CP" denotes the smallest size of a BB/CP proof or algorithm in the corresponding setting. }\label{tab:CP-BB-results}
\vspace{-0.5cm}
\end{table}
%\medskip

%\vspace{-0.3cm}
\paragraph{0/1 convex sets and variable disjunctions.} Consider the family of pure-integer instances where $S=\Z^n$ and $C$ is a convex set contained in the $[0,1]^n$ hypercube. This captures most combinatorial optimization problems, for instance. We focus on the most commonly used disjunctions in practice: \emph{variable disjunctions}, i.e., every disjunction is a union of two halfspaces of the form
\begin{equation}\label{eq:var-disj}
D_{i,K} := \{x \in \R^n: x_i \leq K\}\cup \{x \in \R^n: x_i \geq K+1\}, \qquad \textrm{where }i \in \{1, \ldots, n\} \textrm{ and }K \in \Z.
\end{equation}

The following is a generalization of a result by Dash~\cite{dash2002exponential,dash2005exponential}.

\begin{theorem}\label{thm:CP<=BB} Let $C\subseteq [0,1]^n$ be any closed, convex set. Let $S=\Z^n$ and $\D$ be the family of variable disjunctions. Let $\langle c, x \rangle \leq \gamma$ be a valid inequality for $C \cap \{0,1\}^n$ (possibly $c=0$, $\gamma = -1$ if $C \cap \{0,1\}^n = \emptyset$). If there exists a branch-and-cut proof of size $N$ based on $\D$ that certifies $\langle c, x \rangle \leq \gamma$, then for any $\epsilon > 0$, there exists a cutting plane proof based on $\D$ of size at most $N$ certifying $\langle c, x \rangle \leq \gamma + \epsilon$.

If $C$ is a polytope, then the statement is also true with $\epsilon = 0$.
\end{theorem}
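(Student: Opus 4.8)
\textbf{Overall strategy.} The plan is to induct on the structure of the branch-and-cut tree, converting it node-by-node into a cutting plane proof of comparable length, while tracking how much the target value $\gamma$ must be relaxed. The central idea — going back to Dash — is that over the $0/1$ cube, a \emph{branching} step on a variable disjunction $D_{i,0} = \{x_i \le 0\}\cup\{x_i \ge 1\}$ can be \emph{simulated} by cutting planes: if we have cutting plane proofs for the two children $C'\cap\{x_i\le 0\}$ and $C'\cap\{x_i\ge 1\}$ certifying $\langle c,x\rangle \le \gamma$, then because the only $0/1$ points of $C'$ lie in one of the two slabs, we can "interpolate" these two proofs to get a cutting plane proof for $C'$ itself. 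The key geometric fact enabling this is that on $[0,1]^n$, for a variable disjunction the two pieces $\{x_i \le 0\}$ and $\{x_i\ge 1\}$ are the two facets $\{x_i = 0\}$ and $\{x_i = 1\}$ of the cube, so $C'$ is sandwiched between them in the $x_i$ direction.

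\textbf{Key steps.} First I would set up the induction on $N$, the size of the BC proof, with the statement: if a node represents $C' \subseteq C$ and there is a BC proof of size $N$ rooted there certifying $\langle c,x\rangle\le\gamma$ valid on $C'\cap\{0,1\}^n$, then for every $\epsilon>0$ there is a CP proof of size $\le N$ from $C'$ certifying $\langle c,x\rangle \le \gamma+\epsilon$. The base case $N=0$ is a leaf: $\langle c,x\rangle\le\gamma$ is already valid on $C'$, nothing to do. For the inductive step, look at the root's type. If the root is a \emph{cutting} node with child $C'\cap H$ and a BC proof of size $N-1$ below it: apply induction to get a CP proof of size $\le N-1$ from $C'\cap H$ for $\gamma+\epsilon$; prepend the cut $H$ to obtain a CP proof of size $\le N$ from $C'$. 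If the root is a \emph{branching} node on $D_{i,K}$ — and here I would first argue we may assume $K=0$, i.e. the disjunction is $\{x_i\le 0\}\cup\{x_i\ge 1\}$, since any other split has one side missing $[0,1]^n$ entirely and reduces to a cutting node — then we have two subtrees, of sizes $N_1$ and $N_2$ with $N_1+N_2 = N-2$ (or $N-1$; one must be careful with how the "size" counts the two children), rooted at $C'\cap\{x_i\le 0\}$ and $C'\cap\{x_i\ge1\}$. Induction gives CP proofs $\pi_0$ (length $\le N_1$) and $\pi_1$ (length $\le N_2$) for $\gamma+\epsilon'$ on these two faces. The heart of the argument is to \emph{merge} $\pi_0$ and $\pi_1$ into a single CP proof from $C'$.

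\textbf{The merging lemma — the main obstacle.} The crux, and the step I expect to be hardest, is the following: given a CP proof $H_1,\dots,H_p$ starting from $C' \cap \{x_i \le 0\}$ that ends with $\langle c,x\rangle \le \delta$ valid, produce cutting planes valid for (slabs of) $C'$ that emulate it. The point is that a halfspace $H$ that is a legitimate cut for $C'\cap\{x_i\le 0\}$ (derived from some disjunction $D\in\mathcal D$, i.e. $C'\cap\{x_i\le 0\}\cap D \subseteq H$) need not be a cut for $C'$. The standard fix is to "tilt" $H$: replace $H = \{\langle a,x\rangle \le b\}$ by $H_t = \{\langle a,x\rangle \le b + t\, x_i\}$ for a suitable large multiplier $t$ — this is again a valid cut for $C'$ from the \emph{same} disjunction $D$ (since adding a nonnegative multiple of the valid inequality $x_i \ge 0$ on $D\cap\{x_i\le0\}$... ) — wait, one must instead use that $H_t$ agrees with $H$ on $\{x_i = 0\}$ and is implied on all of $C'$ by combining $H$ with the cube inequality; and on $\{x_i = 1\}$ it becomes weaker by exactly $t$, which is fine because the other branch's proof $\pi_1$ handles that face. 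So the merged proof is: run a tilted copy of $\pi_0$, which drives $C'$ down to a set whose $\{x_i=0\}$-face satisfies $\langle c,x\rangle\le\delta$ but whose $\{x_i=1\}$-face is only controlled up to a slack; then run a tilted copy of $\pi_1$ with tilt in the $(1-x_i)$ direction; finally take one more cut (the variable disjunction $D_{i,0}$ itself, which is valid) to conclude $\langle c,x\rangle \le \max(\delta_0,\delta_1) + (\text{slack})$ on $C'$. Choosing the tilt parameters large enough and $\epsilon'$ small enough relative to $\epsilon$ closes the estimate; the total length is $p + (\text{length of }\pi_1) + O(1)$, and I would need to verify the $+O(1)$ actually fits within the "size $\le N$" budget by accounting carefully (the branching node contributed $2$ to the size, which pays for the extra interpolation cuts). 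For the polytopal case $\epsilon = 0$: when $C$ is a polytope all the intermediate sets are polyhedra, the relevant maxima are attained, and a compactness/finiteness argument lets one take $\epsilon' = 0$ throughout — the slacks become exact and no perturbation is needed. The delicate accounting of proof length through the interpolation, and making the tilt construction produce a \emph{legitimate} cut from the same disjunction family $\mathcal D$ (not some ad hoc halfspace), is where the real work lies.
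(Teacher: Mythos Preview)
Your proposal is correct and follows essentially the same approach as the paper: the ``tilt'' you describe is precisely what the paper calls an $\epsilon$-approximate rotation with respect to the face hyperplane $x_i = 0$ (resp.\ $x_i = 1$), and the paper isolates the ``merging lemma'' you flag as the hard part into a standalone result (Theorem~\ref{thm:CP-proof-lifting}) showing that any CP proof for a face $F$ of $C'$ lifts, inequality by inequality, to a CP proof for $C'$ using the same disjunctions. The only structural difference is that the paper inducts bottom-up on the number of branching nodes (processing a \emph{maximal-depth} branching node first, so that the two subtrees below it are already pure CP proofs and the lifting lemma applies directly), whereas you induct top-down on total size; this makes the paper's size accounting slightly cleaner --- two child nodes disappear, one final disjunctive cut is added, net $-1$ --- but your version works too once the lifting lemma is in hand.
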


Thus, in this setting, cutting planes are always at least as good as branch-and-bound (up to $\epsilon$ slack in the general convex case and exactly for the polyhedral case). The question arises if CP can be provably much better. We show that this is indeed the case with an instance of maximum independent set problems. Let $P(G)$ denote the standard formulation of the independent set problem on a graph $G = (V,E)$: $P(G):= \{x \in \R^V: x_u + x_v \leq 1 \;\; \forall (u,v) \in E\;\; x \in [0,1]^V\}$; so the independent sets are represented by $P(G) \cap \Z^n$. The objective is to maximize $\sum_{v\in V}x_v$.
\medskip

\begin{theorem}\label{thm:CP<<BB} Let $G$ be the graph given by $m$ disjoint copies of $K_3$ (cliques of size $3$). Then for $C=P(G)$  with objective $\sum_{v\in V}x_v$, $S= \Z^{3m}$ and $\D$ representing the family of all variable disjunctions, there is a cutting plane algorithm which solves the maximum independent set problem in $m$ iterations, but any branch-and-bound proof certifying an upper bound of $m$ on the optimal value has size at least $2^{m+1}-2$ for all $m\geq 1$.\end{theorem}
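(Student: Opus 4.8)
The statement splits into an easy upper bound for cutting planes and a more delicate lower bound for branch-and-bound.

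\emph{The cutting-plane algorithm.} Over $P(G)$ the objective $\sum_{v\in V}x_v$ restricted to any $K_3$-copy with vertex set $\{a,b,c\}$ is maximized, with value $\tfrac32$, only at $(\tfrac12,\tfrac12,\tfrac12)$, so the LP value of $P(G)$ is $\tfrac{3m}{2}$. For each copy the clique inequality $x_a+x_b+x_c\le 1$ is a cutting plane for $P(G)$ derived from the variable disjunction $\{x_a\le 0\}\cup\{x_a\ge 1\}$: on the branch $x_a=0$ it follows from the edge inequality $x_b+x_c\le 1$, and on the branch $x_a=1$ the edge inequalities force $x_b=x_c=0$. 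The algorithm I would run adds these $m$ clique inequalities, one per iteration. The only thing to verify is that at iteration $i\le m$ the current optimum is still fractional, so that the $i$-th cut legitimately removes it: indeed every vertex of $P(G)$ intersected with the first $i-1$ clique cuts is a product of vertices of the individual factors, and the $i$-th factor (still $P(K_3)$) contributes the vertex $(\tfrac12,\tfrac12,\tfrac12)$. After the $m$-th cut the feasible region is the product of $m$ copies of $\{x\in[0,1]^3 : x_a+x_b+x_c\le 1\}$, whose vertices are integral of value $m$, so the algorithm (returning a vertex, as is standard) stops; thus $m$ iterations suffice.

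\emph{The branch-and-bound lower bound.} I would work with a \emph{minimum-size} BB proof $T$ certifying $\sum_{v\in V}x_v\le m$ in the sense of Definition~\ref{def:BC-proof-rank} (every node a branching node). First normalize: $T$ must be \emph{clean}, meaning no branching is performed on a variable whose value is already forced in the set represented by that node. Otherwise one child equals the parent and the other is empty, and replacing the subtree at that node by the subtree at its non-empty child gives a strictly smaller valid proof, a contradiction. Every set occurring in $T$ is $P(G)$ with a consistent collection of variables fixed to $0$ or $1$; in a clean proof a branching variable $x_i$ is therefore unfixed and no variable of its copy is fixed to $1$, which makes both children $x_i=0$, $x_i=1$ non-empty (exhibit the obvious $0/1$ feasible points). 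So $T$ is a full binary tree and all leaves represent non-empty sets.

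Now the core estimate. A leaf set $C_\ell$ is a product over the $m$ copies, so its LP value is $\sum_{j=1}^m\mu_j$ with $\mu_j=\tfrac32$ if copy $j$ has no fixed variable, $\mu_j=0$ if all three variables of copy $j$ are fixed to $0$, and $\mu_j=1$ otherwise. Writing $a_\ell,b_\ell,c_\ell$ for the number of copies of the three respective types (so $a_\ell+b_\ell+c_\ell=m$), validity of the leaf, $\tfrac32 a_\ell+c_\ell\le m$, is exactly the inequality $a_\ell\le 2b_\ell$. On the other hand, in a clean proof each branching on a root-to-$\ell$ path fixes a new variable, so the depth of $\ell$ equals the number of variables fixed in $C_\ell$, which is at least $3b_\ell+c_\ell$ (a type-$b$ copy forces all three of its variables to be branched to $0$, a type-$c$ copy forces at least one branching, a type-$a$ copy forces none). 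Combining, $\operatorname{depth}(\ell)\ge 3b_\ell+c_\ell\ge 2b_\ell+b_\ell+c_\ell\ge a_\ell+b_\ell+c_\ell=m$. Hence every leaf of the full binary tree $T$ lies at depth $\ge m$, so $T$ has at least $2^m$ leaves and therefore at least $2^{m+1}-1$ nodes; its size (nodes minus the root) is at least $2^{m+1}-2$, and by minimality of $T$ so is the size of every BB proof.

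\emph{Main obstacle.} I expect the leaf characterization to be the subtle step. It is tempting but false to assert that a valid leaf must ``touch'' every copy: a leaf that fixes all three variables of one copy to $0$ can leave two other copies entirely untouched. The right invariant is $a_\ell\le 2b_\ell$, and the small coincidence driving the proof is that this is precisely the inequality needed to upgrade the crude depth bound $3b_\ell+c_\ell$ to $m$. A secondary point requiring care is that the ``clean proof'' normalization should be carried out as a minimality argument, since an explicit node-deletion would need checking against the structural conditions of Definition~\ref{def:BC-proof-rank}.
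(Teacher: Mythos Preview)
Your proof is correct and follows essentially the same approach as the paper: the cutting-plane part adds the $m$ clique inequalities one per iteration, and the branch-and-bound part shows that every leaf of a minimally-sized (hence normalized) BB tree lies at depth at least $m$, forcing at least $2^{m+1}-1$ nodes. The paper's depth argument is organized a bit more directly---it observes that any feasible node at depth $k=k_1+k_2$ has LP value at least $k_1+\tfrac{3(m-k_1)-k_2}{2}=\tfrac{3m-k}{2}>m$ whenever $k<m$---while you reach the same conclusion through the leaf classification and the inequality $a_\ell\le 2b_\ell$; the two are equivalent, and your ``clean'' normalization is exactly the paper's contraction of infeasible/redundant branchings phrased as a minimality argument.
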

\medskip

Theorems~\ref{thm:CP<=BB} and~\ref{thm:CP<<BB} are together represented in the first row and first column of Table~\ref{tab:CP-BB-results}. Theorem~\ref{thm:CP<<BB} can be generalized to examples where the graph is connected;  see subsection~\ref{sec:misc}.

\begin{remark}
Examples where BB based on variable disjunctions takes an exponential number of iterations were already known: see, e.g., \cite{Jeroslow1974,Krishnamoorthy08}. However, to the best of our knowledge, it is unknown whether CP based on the same family of disjunctions can finish in polynomial time on these instances.
\end{remark}

\paragraph{General polytopes and variable disjunctions.} If one allows polytopes that are not necessarily subsets of the $n$-hypercube, but sticks with $S =\Z^n$ and variable disjunctions, then the advantage of CP over BB discussed above disappears. We created examples of polytopes $P_n$ in every dimension $n$ such that there is a BB algorithm that solves the problem in $O(1)$ iterations, but any CP proof will necessarily have {\em infinite length} (recall that only cutting planes based on variable disjunctions are allowed). 

 \begin{theorem}\label{thm:BB<<CP} Let $B \subseteq \R^2$ be defined as the convex hull of $\{(0,0), (1.5,1), (2,2), (1, 1.5)\}$. For every $n\in \N$, let $P_n\subseteq \R^n$ be the Cartesian product of $B$ and the hypercube $[-\frac12,\frac12]^{n-2}$, i.e., $P_n := B \times [-\frac12,\frac12]^{n-2}$. Consider instances of~\eqref{eq:linear-obj} with the objective $x_1 - x_2$, $C = P_n$ and $S=\Z^n$, and let $\D$ be the family of variable disjunctions. The optimal value is 0 and there is a branch-and-bound algorithm based on $\mathcal D$ which certifies $x_1 - x_2 \leq 0$ in $O(1)$ steps. However, any cutting plane proof of $x_1 - x_2 \leq 0$ has infinite length.
 \end{theorem}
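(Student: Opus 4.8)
My plan is to treat the two halves of the statement separately. The lattice points of $P_n$ are exactly $\{(0,0),(1,1),(2,2)\}\times\{0\}^{n-2}$, since the only integer points of $B$ lie on the diagonal $x_1=x_2$, where the objective vanishes; hence $\mathrm{OPT}=0$. For the branch-and-bound side I would exhibit an explicit proof tree of constant size. Branch the root $P_n$ on the disjunction $\{x_1\le 1\}\cup\{x_1\ge 2\}$. On $P_n\cap\{x_1\ge 2\}$ the inequalities defining $B$ force $x_1=x_2=2$, so this child equals $\{(2,2)\}\times[-\tfrac12,\tfrac12]^{n-2}$ and $x_1-x_2=0$ there: a leaf. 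On $P_n\cap\{x_1\le 1\}$ branch again on $\{x_2\le 0\}\cup\{x_2\ge 1\}$: the first grandchild is forced to $\{(0,0)\}\times[-\tfrac12,\tfrac12]^{n-2}$ (because $x_2\ge\tfrac23 x_1\ge 0$ on $B$), a leaf with objective $0$; the second satisfies $x_1\le 1\le x_2$, hence $x_1-x_2\le 0$, another leaf. This is a valid BB proof with four nodes, and the corresponding run of the BB algorithm — letting the convex solver return the lattice point $(2,2,0,\dots,0)$ on the first leaf and then pruning the two descendants of $\{x_1\le 1\}$ against $LB=0$ — terminates in $O(1)$ iterations.

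For the cutting-plane side I would show that \emph{every} cutting-plane proof fails, by lower-bounding the iterated $\D$-closure. Using that $CP_\D$ is monotone and that a routine induction gives $C_N\supseteq CP^N_\D(P_n)$ for the set $C_N$ obtained after $N$ cutting steps, it suffices to produce, for each $N$, a point of $CP^N_\D(P_n)$ with $x_1-x_2>0$. The first reduction kills the cube coordinates: for $i\ge 3$ and any convex $A\subseteq P_n$, the only variable disjunctions $D_{i,K}$ giving a nontrivial cut of $A$ are $K=0$ and $K=-1$ (the cube factor makes $A\cap\{x_i\ge 1\}$ and $A\cap\{x_i\le -1\}$ empty), and these cuts are just the halfspaces $\{x_i\le 0\}$ and $\{x_i\ge 0\}$, so $CP_\D$ pins $x_i=0$. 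Likewise, since $B\subseteq[0,2]^2$, the only nontrivial splits of $x_1$ or $x_2$ on a convex subset of $P_n$ are at $K\in\{0,1\}$. Combining these with monotonicity applied to $P_n\supseteq B\times\{0\}^{n-2}$ gives $CP^N_\D(P_n)\supseteq \cl_2^N(B)\times\{0\}^{n-2}$, where $\cl_2$ is the planar operator $A\mapsto\bigcap_{j\in\{1,2\},\,K\in\{0,1\}}\conv\big((A\cap\{x_j\le K\})\cup(A\cap\{x_j\ge K+1\})\big)$. Everything then reduces to the two-dimensional claim: $\cl_2^N(B)$ contains a point with $x_1>x_2$ for all $N$.

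This planar claim is the heart of the argument, and I would prove it by exhibiting an explicit nested family of quadrilaterals $P_N\subseteq\cl_2^N(B)$ each carrying a marked vertex $r_N$ with $x_1-x_2=\tfrac1{N+2}>0$. Computing the first few closures by hand shows that the extreme point does not stay on one edge but alternates between two positions; accordingly, with $t_N:=\tfrac1{N+2}$, I would set $P_N:=\conv\{(0,0),(1+t_N,1),(2,2),(1,1+t_N)\}$ and $r_N:=(1+t_N,1)$ for even $N$ (so $P_0=B$), and $P_N:=\conv\{(0,0),(1,1-t_N),(2,2),(1-t_N,1)\}$ and $r_N:=(1,1-t_N)$ for odd $N$. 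The induction step $P_{N+1}\subseteq\cl_2(P_N)$ unwinds the definition of $\cl_2$: one checks $P_{N+1}\subseteq P_N$ (the two new vertices land on edges of $P_N$), two of the four relevant cuts of $P_N$ equal $P_N$ itself (a short direct check), and the remaining two — the two $K=1$ splits for $N$ even, the two $K=0$ splits for $N$ odd — are images of one another under $x_1\leftrightarrow x_2$, so by the symmetry of $B$ (hence of every $P_N$) it is enough to bound one of them below by $P_{N+1}$. For that cut one writes the cut polygon $\conv\big((P_N\cap\{x_j\le K\})\cup\{(2,2)\}\big)$ explicitly and verifies that the four vertices of $P_{N+1}$ — most notably $r_{N+1}$ — satisfy its facet inequalities; each such check collapses to a triviality like $t_N^2\ge 0$ or $2t_N+3t_N^2\ge 0$. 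I expect the main obstacle to be precisely this: identifying the correct alternating family (the obvious single-edge guess already breaks at $N=1$) and organizing the case analysis so the closure computations remain elementary. Granting it, $(r_N,0,\dots,0)\in CP^N_\D(P_n)\subseteq C_N$ has $x_1-x_2=t_N>0$ for every $N$, so $x_1-x_2\le 0$ is valid on no $C_N$ — there is no finite cutting-plane proof.
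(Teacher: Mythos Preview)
Your proposal is correct and follows the same approach as the paper: the identical two branchings $D_{1,1}$ and $D_{2,0}$ for the BB side, and on the CP side the same core idea of tracking how the fractional vertex below the diagonal drifts toward but never reaches $x_1=x_2$ under repeated variable-disjunction closures. Your treatment is in fact considerably more rigorous than the paper's own proof (which simply asserts a convergence rate and points to a figure), in particular by making the reduction from $P_n$ to the planar closure explicit via monotonicity and by supplying the alternating inner-bound family $P_N$ with the inductive verification.
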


Together with Theorem~\ref{thm:CP<<BB}, this fills in the first row and second column of Table~\ref{tab:CP-BB-results}.

\medskip

\paragraph{Fixing the dimension.} {The following relatively straightforward result shows that in fixed dimensions, one must go beyond the $0/1$ setting for anything interesting to happen.

\begin{theorem}\label{thm:fixed-dim-0/1} Let $C \subseteq [0,1]^n$ be compact, convex and let $\langle c, x \rangle \leq \gamma$ be valid for $C\cap \Z^n$. There exist BB and CP proofs based on variable disjunctions that prove the validity of this inequality and whose size is bounded by a function only of the dimension. Thus, if we consider instances in a fixed dimension, there are O(1) size BB and CP proofs.
\end{theorem}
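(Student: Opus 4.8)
The plan is to exploit compactness of $C$ together with the fact that the $0/1$ cube has only finitely many integer points, to get a bound depending only on $n$ (and not on the data describing $C$). First I would handle the case $C \cap \Z^n = \emptyset$; here $C$ is a compact convex set disjoint from the finite set $\{0,1\}^n$ (indeed disjoint from all of $\Z^n$, but only the $0/1$ points are relevant since $C \subseteq [0,1]^n$), and we must certify infeasibility. For the BB proof, simply branch on all $n$ variables in sequence: the branch-and-bound tree has $2^n$ leaves, each of which is $C$ intersected with a single point $v \in \{0,1\}^n$; each such leaf is either empty or a singleton not in $C$, and in either case the leaf is empty, so the inequality $0 \le -1$ (i.e. infeasibility) is certified. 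This is an $O(1)$-size BB proof once $n$ is fixed. For the CP proof in this case one can appeal to Theorem~\ref{thm:CP<=BB}: a BB proof of size $2^n$ exists, so for the polytope case (and more generally up to $\epsilon$, which suffices since the target is a ``$\le -1$'' type statement and one can push $\epsilon$ below $1$) there is a CP proof of size at most $2^n$; alternatively argue directly by repeatedly taking integer hulls.

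For the main case $C \cap \{0,1\}^n \ne \emptyset$, the key observation is that once we branch on all $n$ variables, every leaf of the branch tree is $C \cap \{v\}$ for some $v \in \{0,1\}^n$, which is either empty or the single point $v$, and in the latter case $v \in C \cap \Z^n$ so the valid inequality $\langle c, x\rangle \le \gamma$ holds there trivially. Hence the full branching tree of depth $n$ — which has $2^{n+1}-2$ nodes, a quantity depending only on $n$ — is a valid BB proof of $\langle c,x\rangle \le \gamma$. This takes care of the BB half. For the CP half, invoke Theorem~\ref{thm:CP<=BB}: since we just exhibited a branch-and-cut (indeed BB) proof of size $N = 2^{n+1}-2$, for every $\epsilon > 0$ there is a CP proof of size at most $N$ certifying $\langle c,x\rangle \le \gamma + \epsilon$, and if $C$ is a polytope we get $\epsilon = 0$. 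Since $N$ depends only on $n$, this is an $O(1)$-size CP proof in fixed dimension.

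The one subtlety is the word ``compact'' versus ``polytope'': Theorem~\ref{thm:CP<=BB} gives an exact ($\epsilon = 0$) CP proof only in the polyhedral case, and only an $\epsilon$-approximate one for general compact convex $C$. So for a general compact convex body the cleanest statement is that there is a CP proof of $\langle c,x\rangle \le \gamma + \epsilon$ of size depending only on $n$ for every $\epsilon>0$; the theorem statement as written (``prove the validity of this inequality'') should be read in this $\epsilon$-approximate sense in the non-polyhedral case, exactly paralleling Theorem~\ref{thm:CP<=BB}. I expect no real obstacle here — the only thing to be careful about is making sure that after branching on all coordinates the leaves really are singletons or empty (this uses $C \subseteq [0,1]^n$ crucially, so that $x_i \le 0$ and $x_i \ge 1$ are the two children and there is no ``$x_i = $ something in between'' to worry about), and that the BB proof we wrote down literally fits the template of Definition~\ref{def:BC-proof-rank}. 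An alternative self-contained route for the CP half, avoiding Theorem~\ref{thm:CP<=BB} entirely, is to note that in fixed dimension the variable-disjunction (split) closure operator reaches the integer hull of a polytope in a bounded number of rounds — but citing Theorem~\ref{thm:CP<=BB} is shorter and keeps the bound uniform across the compact-convex case as well.
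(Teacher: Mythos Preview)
Your BB argument is essentially the paper's: branch on all $n$ variables and observe every leaf is either empty or a single $0/1$ point of $C$, so the tree of size $O(2^n)$ certifies the inequality.

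For the CP half, there is a genuine gap. You invoke Theorem~\ref{thm:CP<=BB} and accept that for non-polyhedral $C$ this yields only $\langle c,x\rangle \le \gamma + \epsilon$, then suggest the theorem ``should be read in this $\epsilon$-approximate sense.'' But the paper proves the exact statement, so this concession gives away too much. The key idea you are missing is a reduction to the polyhedral case: since $C$ is compact and $\{0,1\}^n$ is finite, each $v \in \{0,1\}^n \setminus C$ can be strictly separated from $C$ by a hyperplane, and intersecting $[0,1]^n$ with these finitely many halfspaces produces a polytope $P \supseteq C$ with $P \cap \Z^n = C \cap \Z^n$. Any CP proof for $P$ is automatically a CP proof for $C$ (every cut valid for $P_i \cap D$ is valid for $C_i \cap D \subseteq P_i \cap D$), so it suffices to work with $P$.

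From here the paper takes a different route than your Theorem~\ref{thm:CP<=BB} shortcut: it applies Balas' sequential convexification to $P$ to reach $\conv(P\cap\Z^n)$ in $n$ rounds, then uses the Andersen--Cornu\'ejols--Li result (that every split cut for a polytope is already a split cut for some relaxation defined by at most $n$ of its inequalities) together with Carath\'eodory to bound the total number of cuts by a function of $n$ alone. Your approach can be salvaged more simply: once you have the polytope $P$, your full-branching BB proof works verbatim for $P$ (the leaves $P\cap\{v\}$ are empty or singletons exactly as before), and now Theorem~\ref{thm:CP<=BB} applies with $\epsilon=0$ to give an exact CP proof for $P$, hence for $C$. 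Either way, the polytope reduction is the missing ingredient.
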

It is actually easy to present a BB algorithm that takes $O(1)$ iterations (this is what we do in the proof of Theorem~\ref{thm:fixed-dim-0/1}); however, it is not clear if the CP proof presented in the proof can be converted into a CP algorithm that takes $O(1)$ iterations. This remains an open question.
}
\bigskip

The independent set example in Theorem~\ref{thm:CP<<BB} shows that if the dimension varies, CP can be exponentially better than BB. One can ask if a family of {polyhedral} instances can be constructed in some fixed dimension such that CP is better than BB by a factor that is exponential in the size of the input data (i.e., bit complexity of the numerical entries of the constraints). Interestingly, in fixed dimensions, the situation is {\em reversed}; roughly speaking BB is always as good as CP (at most polynomial blow-up), if the family of disjunctions has ``bounded complexity". Moreover, we can establish such a result for quite general non-convexities and convex relaxations.
\smallskip

\begin{definition} The {\em complexity} of a disjunction $D = Q_1 \cup \ldots \cup Q_k$ is defined as follows: if any of the $Q_i$'s are non-polyhedral, then the complexity of $D$ is $\infty$. Else, the complexity of $D$ is the sum of the number of facets of each $Q_i$. The {\em complexity} of a family $\D$ of disjunctions is the maximum complexity of any disjunction in $\D$ (e.g., the complexity of the family of split disjunctions is $2$).
\end{definition}
\medskip

\begin{theorem}\label{thm:BB<=CP} Fix $n \in \N$.  Let $S\subseteq \R^n$ be any closed set modeling the non-convexity and let $\D$ be a family of valid disjunctions for $S$ with complexity bounded by $M$. For any convex set $C\subseteq \R^n$ and any inequality $\langle c, x \rangle \leq \gamma$ valid for $C\cap S$, if there is a cutting plane proof of length {$K$} for its validity, then there is a branch-and-bound algorithm which proves the validity and takes at most {$O((MK)^{n+1})$} iterations.
\end{theorem}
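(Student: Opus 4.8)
The plan is to convert a given cutting plane proof of length $K$ into a branch-and-bound tree, by simulating each cut using branching. The key obstacle is that a single cutting plane $H$ derived from a disjunction $D = Q_1 \cup \dots \cup Q_k$ is a \emph{global} weakening: $C \cap D \subseteq H$, and hence $C \cap S \subseteq C \cap H$, but $H$ itself need not be obtainable by a bounded branching tree of depth independent of the numbers involved. What saves us is that we are in \emph{fixed dimension} $n$: I would exploit the fact that $C \cap D = \bigcup_i (C \cap Q_i)$, so replacing a node representing a convex set $C'$ by the $k$ children $C' \cap Q_i$ is a legal branching step of the BB algorithm (since $D$ is valid for $S$), and after this one branching step, each child is contained in the halfspace $H$ up to the portion sticking out of $D$ — but since $C' \cap Q_i \subseteq Q_i \subseteq H$ is \emph{not} generally true, the correct statement is that $C' \cap Q_i \cap S = C' \cap Q_i \cap D \cap S$ and more importantly the union of the children equals $C' \cap D \supseteq C' \cap S$, and each $C' \cap Q_i$ is a face-like slice. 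The real point: after branching on $D$, on each child the cut $H$ \emph{is} implied — indeed $C' \cap Q_i \subseteq C' \cap D \subseteq H$? No: we need $C \cap D \subseteq H$, i.e. $C' \cap Q_i$, for $C' \subseteq C$, satisfies $C' \cap Q_i \subseteq C \cap D \subseteq H$. Yes. So branching once on $D$ produces $k \le M$ children, each of which is already contained in $C' \cap H$ (in fact in $C' \cap Q_i \cap H$). Thus one cut is simulated by one branching step with branching factor $\le M$.

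Chaining this naively over the $K$ cuts would give a tree where at level $j$ we have up to $M^j$ nodes, i.e. $M^K$ leaves — exponential in $K$, which is too weak. To get the claimed $O((MK)^{n+1})$ bound I would instead argue more carefully using fixed dimension: the cutting plane proof produces a nested sequence $C = C_0 \supseteq C_1 \supseteq \dots \supseteq C_K$ with $\langle c, x\rangle \le \gamma$ valid on $C_K$, where $C_{i+1} = C_i \cap H_{i+1}$ and $H_{i+1}$ is derived from $D_{i+1} = Q^{i+1}_1 \cup \dots \cup Q^{i+1}_{k_{i+1}}$. I would build a BB tree whose nodes at depth $i$ are indexed by ``which side of each of the first $i$ disjunctions'' — i.e. sequences $(j_1, \dots, j_i)$ with $j_\ell \in \{1, \dots, k_\ell\}$ — but \emph{prune} any node whose convex set $C_0 \cap Q^1_{j_1} \cap \dots \cap Q^i_{j_i}$ is empty or on which $\langle c, x\rangle \le \gamma$ is already valid. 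The claim would be that each $C_i$ (the cutting-plane iterate) contains $\bigcup$ over surviving leaves of the corresponding slices, so that once we have processed all $K$ disjunctions every surviving leaf has $\langle c,x\rangle \le \gamma$ valid, giving a valid BB proof; the issue is then bounding the number of surviving nodes. Here is where fixed dimension enters: the surviving branching nodes correspond to cells of an arrangement of at most $MK$ halfspaces in $\R^n$ (each disjunction $D_i$ contributes $\le M$ facets total across its pieces $Q^i_j$), and the number of nonempty cells of an arrangement of $m$ halfspaces in $\R^n$ is $O(m^n)$; accounting for the tree structure (each cell can appear along a root-to-leaf path, and there are $\le K$ levels) yields $O((MK)^n \cdot K) = O((MK)^{n+1})$.

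The main technical step I expect to require care is the \emph{correctness of pruning}: I must show that if a node's convex set $C'$ already satisfies $\langle c,x\rangle \le \gamma$ for all $x \in C'$, it is safe to make it a leaf, and that the leaves of the pruned tree still ``cover'' $C_K$ in the sense needed for a valid BB proof, i.e. every leaf set has the inequality valid. For this I would prove by induction on $i$ that for every point $x \in C_i$ (the $i$-th cutting-plane iterate), either $\langle c,x\rangle \le \gamma$ or $x$ lies in some surviving slice $C_0 \cap Q^1_{j_1} \cap \dots \cap Q^i_{j_i}$ at depth $i$; the inductive step uses $C_{i+1} = C_i \cap H_{i+1}$ together with validity $C_i \cap D_{i+1} \subseteq H_{i+1}$ — wait, validity of the cut is $C_i \cap D_{i+1} \subseteq H_{i+1}$, but points of $C_{i+1}$ outside all $Q^{i+1}_j$ are not covered, so I instead need: for $x \in C_{i+1}$, if $x \in D_{i+1}$ then $x$ lands in one of the new slices, and if $x \notin D_{i+1}$ then since $D_{i+1}$ is valid for $S$, $x \notin S$, which is fine for a BB \emph{proof} because BB proofs only need the inequality valid on leaf sets, and such an $x$ will be cut off — actually the cleanest route is: since $D_{i+1}$ is a valid disjunction for $S$ and $S \supseteq$ the relevant feasible set, $C_i \cap S \subseteq C_i \cap D_{i+1} = \bigcup_j (C_i \cap Q^{i+1}_j)$, so branching is legal and I only ever need to track $C_i \cap S$, not all of $C_i$. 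I would therefore phrase the induction in terms of $C \cap S$: every point of $C \cap S$ violating $\langle c,x\rangle \le \gamma$ must survive into a leaf, but $C_K$ has the inequality valid so in fact \emph{no} point of $C\cap S$ violates it, hence all leaves can be declared as having the inequality valid — more precisely, a leaf where $C'$ is nonempty but $\langle c,x\rangle \le \gamma$ fails to be valid on $C'$ cannot occur once we have branched on all $K$ disjunctions, because then $C' \subseteq C_K$'s ``support''. Making this last sentence rigorous — that after exhausting the disjunction sequence every nonempty leaf set is contained in $C_K$ up to the $S$-irrelevant part — is the crux, and I would handle it by the slice-covering induction above, then invoke the arrangement-counting bound for the size estimate.
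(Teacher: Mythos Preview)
Your approach and the paper's coincide on the core idea: simulate the CP proof by branching on the same disjunctions $D_1,\dots,D_K$, then bound the number of nodes by the $O((MK)^n)$ cell count of the hyperplane arrangement generated by the $\le MK$ facets, times the depth $\le K$. Two points deserve comment.

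First, your correctness argument is simpler than you fear. The one-step observation you made in your first paragraph already iterates: if a depth-$(i{-}1)$ node $C'$ satisfies $C'\subseteq C_{i-1}$, then each child satisfies $C'\cap Q^i_j\subseteq C_{i-1}\cap D_i\subseteq H_i$, hence $C'\cap Q^i_j\subseteq C_{i-1}\cap H_i=C_i$. So every depth-$K$ leaf lies in $C_K$ and therefore satisfies $\langle c,x\rangle\le\gamma$. You do \emph{not} need the ``covering'' direction you wrestled with in your last paragraph; validity of each disjunction for $S$ already guarantees that each branching step preserves $S$-feasibility, which is all a BB proof requires.

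Second, the paper does not branch on $D_1,\dots,D_K$ in fixed order. Instead it repeatedly selects the node $Q$ with maximum objective, picks $v\in Q$ with $\langle c,v\rangle>z^{CP}(i)$, finds a disjunction $D^\star\in\{D_1,\dots,D_i\}$ with $v\notin D^\star$, and branches on that. This adaptive rule is what makes the procedure a BB \emph{algorithm} in the sense of Definition~\ref{def:BB-algo}, where the branching disjunction is required to cut off the node's optimum $\bar x$; your systematic order gives a BB \emph{proof} of the right size but need not satisfy that constraint (at a depth-$i$ node the optimum may well lie in $D_{i+1}$). Since the theorem statement asks for an algorithm, this is a genuine if small gap, and the fix is exactly the paper's adaptive choice. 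The counting argument---no disjunction repeats on a root-to-leaf path, so depth $\le K$; nonempty leaves bounded by cells of an $MK$-hyperplane arrangement; total nodes bounded by $M$ times the number of feasible nodes---is then the same as yours.
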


A notion that becomes important in some branch-and-bound type algorithms is the so-called {\em flatness constant} of a convex set. The flatness theorem~\cite{BanaszczykLitvakPajorSzarek99,banaszczyk1996inequalities,rudelson2000distances} states that there exists a function $f :\N \to \N$, such that for every $n\in \N$, if $C\subseteq \R^n$ is a convex set with $C \cap \Z^n = \emptyset$, then there exists $w \in \Z^n\setminus\{0\}$ such that $\max_{x \in C}\langle w, x\rangle - \min_{x \in C}\langle w, x\rangle \leq f(n)$. The number $f(n)$ is usually referred to as the \emph{flatness constant} (where the word ``constant'' is justified by the fact that this function has its main use when the dimension $n$ is fixed). Combining the flatness theorem with the proof techniques that go behind Theorem~\ref{thm:BB<=CP}, we can also prove the following related theorem for the pure-integer lattice.

\begin{theorem}\label{thm:BB-algorithm-fixed-dim} Fix $n \in \N$.  Let $S=\Z^n$ and let $\D$ be the family of split disjunctions. For any convex set $C\subseteq \R^n$ and any inequality $\langle c, x \rangle \leq \gamma$ valid for $C\cap S$ with $c \in \Z^n$, there is a branch-and-bound algorithm which proves the validity of $\langle c, x \rangle \leq \gamma$ and takes at most $O((2f(n))^{n(n+1)})$ iterations, where $f(n)$ is the flatness constant.
\end{theorem}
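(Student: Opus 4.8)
The plan is to first bound the length of a \emph{cutting plane} proof of $\langle c,x\rangle\le\gamma$, and then invoke Theorem~\ref{thm:BB<=CP} with $M=2$ (the complexity of the split family) to convert it into a branch-and-bound algorithm. The heart of the matter is the following \emph{Main Lemma}: if $C\subseteq\R^n$ is compact, convex and $C\cap\Z^n=\emptyset$, then there is a split-disjunction cutting plane proof, of length at most $r(n)$, that reduces $C$ to $\emptyset$, where $r(1)=1$ and $r(n)\le (f(n)+1)\bigl(r(n-1)+1\bigr)+1$; in particular $r(n)=O\bigl((2f(n))^{n}\bigr)$.

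I would prove the Main Lemma by induction on $n$, the cases $n\le 1$ being immediate (a segment missing $\Z$ lies in one open unit interval, killed by a single split). For the inductive step, the flatness theorem gives a primitive $w\in\Z^n\setminus\{0\}$ with $\max_C\langle w,\cdot\rangle-\min_C\langle w,\cdot\rangle\le f(n)$, so the integers attained by $\langle w,\cdot\rangle$ on $C$ form a consecutive block $a,a{+}1,\dots,b$ with $b-a\le f(n)$ (possibly empty, in which case $C$ lies in one open unit $w$-slab and is killed by a single split). I then peel the layers $a,\dots,b$ one at a time. To peel layer $j$: a one-sided split cut on $\{\langle w,x\rangle\le j{-}1\}\cup\{\langle w,x\rangle\ge j\}$ (whose first halfspace misses the current set) leaves a set $N$ with $\langle w,\cdot\rangle\ge j$; its slice $N\cap\{\langle w,x\rangle=j\}$ is a compact, convex, integer-free set inside the lattice hyperplane $\{\langle w,x\rangle=j\}$, which is unimodularly equivalent to $\R^{n-1}$ with lattice $\Z^{n-1}$, so by induction it has an $(n{-}1)$-dimensional split-CP infeasibility proof of length $\le r(n-1)$. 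Its split disjunctions lift to honest split disjunctions of $\Z^n$ (extend $w$ to a lattice basis), and I replay that proof on $N$, \emph{tilting} each lifted cut along $w$ so that its restriction to $\{\langle w,x\rangle=j\}$ is the intended lower-dimensional cut while it still contains $N$; after $\le r(n-1)$ cuts the slice $\{\langle w,x\rangle=j\}$ of the current set is empty, so one further split on $w$ at $j$ (again one-sided) gives $\langle w,\cdot\rangle\ge j{+}1$. Iterating over the $\le f(n)+1$ layers empties $C$, and counting cuts gives the stated recursion.

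Granting the Main Lemma, the theorem follows in two further steps. First, reduce the inequality to infeasibility: as $c\in\Z^n$ we may assume $\gamma\in\Z$; if $C\cap\{\langle c,x\rangle\ge\gamma{+}1\}=\emptyset$ then one split cut on $\{\langle c,x\rangle\le\gamma\}\cup\{\langle c,x\rangle\ge\gamma{+}1\}$ already proves $\langle c,x\rangle\le\gamma$, and otherwise $\widehat C:=C\cap\{\langle c,x\rangle\ge\gamma{+}1\}$ is compact, convex and integer-free, so the Main Lemma gives it an infeasibility proof of length $\le r(n)$; replaying it on $C$---tilting each cut toward $c$ so that it becomes valid for $C$ while its effect on $\{\langle c,x\rangle\ge\gamma{+}1\}$ is preserved---and appending one split on $c$ yields a split-CP proof of $\langle c,x\rangle\le\gamma$ of length $K\le r(n)+1=O\bigl((2f(n))^{n}\bigr)$. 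Second, Theorem~\ref{thm:BB<=CP} with $M=2$ converts this proof into a branch-and-bound algorithm proving $\langle c,x\rangle\le\gamma$ in $O\bigl((2K)^{n+1}\bigr)=O\bigl((2f(n))^{n(n+1)}\bigr)$ iterations, the implied constant depending only on the fixed dimension $n$.

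The step I expect to need the most care is the ``replaying with tilting'' used both in the inductive step and in the reduction to the inequality case: one must produce a single halfspace that is simultaneously a valid split cut for the full-dimensional current set and equal to a prescribed lower-dimensional cut on the relevant lattice hyperplane (respectively on $\{\langle c,x\rangle=\gamma{+}1\}$). This amounts to choosing a finite ``tilt'' coefficient, and the potential obstruction is a point of the current set lying arbitrarily close to that hyperplane while only barely violating the prescribed cut. When $C$ is a polytope this is harmless, since the relevant ``maximum over parallel slices'' functions are piecewise linear, hence Lipschitz near the hyperplane, so a finite tilt works; for a general convex $C$ one needs an extra approximation argument (exhaust $C$ by polytopes, or relax $\gamma$ to $\gamma+\epsilon$ in the spirit of Theorem~\ref{thm:CP<=BB}). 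The flatness theorem is used only as a black box.
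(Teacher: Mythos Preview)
Your outline matches the paper's---reduce to certifying infeasibility of $\widehat C:=C\cap\{\langle c,x\rangle\ge\gamma+1\}$ via the flatness theorem, then turn that certificate into a BB algorithm by the arrangement/depth argument of Theorem~\ref{thm:BB<=CP}---but the paper never passes through a cutting-plane proof. It first \emph{branches} on the split $\{\langle c,x\rangle\le\gamma\}\cup\{\langle c,x\rangle\ge\gamma+1\}$ and, on the integer-free side, runs the standard Lenstra recursion purely as a branch-and-bound proof: repeated splits along a flatness direction isolate the lower-dimensional slices directly, with no tilting whatsoever, yielding a BB proof of size $O(f(n)^n)$. The BB-proof-to-BB-algorithm conversion is then exactly the cell-counting argument from the proof of Theorem~\ref{thm:BB<=CP} (it only needs the list of disjunctions used, not that they were applied as cuts).

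Your CP detour forces two tilting steps, both more fragile than you allow. The one inside the Main Lemma is an instance of Theorem~\ref{thm:CP-proof-lifting} (the slice $\{\langle w,x\rangle=j\}$ is a genuine face of $N$), so it works exactly for polytopes and only with $\epsilon$-slop otherwise; note also that your assertion that $\widehat C$ is compact is unjustified---the theorem is stated for arbitrary convex $C$, and $\widehat C$ can perfectly well be an unbounded integer-free slab. More seriously, the step ``replay on $C$, tilting each cut toward $c$'' is not backed by any lemma here: the hyperplane $\{\langle c,x\rangle=\gamma+1\}$ is \emph{not} a face of $C$ (the inequality $\langle c,x\rangle\le\gamma+1$ is typically invalid for $C$), so neither Lemma~\ref{lem:approx-lifting} nor Theorem~\ref{thm:CP-proof-lifting} applies. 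If you tilt $\{a_i\cdot x\le b_i\}$ by adding $\lambda c$ so as to cover $C_{i-1}'\cap D_i$ on the side $\langle c,x\rangle<\gamma+1$, the tilted halfspace becomes \emph{stricter} on the side $\langle c,x\rangle>\gamma+1$, and there is no evident reason the two requirements on $\lambda$ are compatible. The clean repair is the paper's: make the first move on $c$ a branch rather than a lift, obtaining a BC proof of size $r(n)+O(1)$, and feed its disjunctions to the simulation of Theorem~\ref{thm:BB<=CP}. Once you do that, the CP machinery inside your Main Lemma is doing no work that Lenstra-style branching would not do more simply and without the compactness and $\epsilon$-approximation caveats.
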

\medskip

Theorems~\ref{thm:BB<=CP} and~\ref{thm:BB-algorithm-fixed-dim} provide some mathematical reasons for why the best complexity guarantees in fixed dimensions are for Lenstra-style algorithms, which can be interpreted as branch-and-bound algorithms. To complement this, the instances from Theorem~\ref{thm:BB<<CP} can be interpreted to be fixed dimension examples showing that BB can be infinitely better than CP. {Nevertheless, in that instance, there is an $O(\log(1/\epsilon))$ size CP proof for $\epsilon$-optimality, i.e., to prove $x_1 - x_2 \leq \epsilon$ for any $\epsilon > 0$. So the CP proof for approximate optimality is polynomial size in terms of the approximation parameter.} We present another family of instances in fixed dimensions in Theorem~\ref{thm:BB-con-CP-exp} below where there are CP algorithms that finish in finite time but any such algorithm will take exponentially (in the data size) many steps ({ we state the result for exact CP proofs, but the CP proofs remain exponential in size even when allowing for $\epsilon$-approximations}).

\begin{theorem}\label{thm:BB-con-CP-exp}
Given a rational $h>0$, let $P_h\subseteq \R^3$ be the convex hull of $\{(0,0,0),(0,2,0),(2,0,0),
(1-\frac{1}{h},1-\frac{1}{h},h)\}$. Consider the instances of~\eqref{eq:linear-obj} with the objective $-x_3$, $C=P_h$ and $S=\Z^3$. The optimal value is $0$ and there is a branch-and-bound algorithm with variable disjunctions which certifies $-x_3\geq 0$ in $O(1)$ steps. However, any cutting plane proof with variable disjunctions of $-x_3\geq 0$ has $\Omega(h)$ length. The instances can be created in any dimension by using the construction from Theorem~\ref{thm:BB<<CP} where one takes a Cartesian product with a hypercube.
\end{theorem}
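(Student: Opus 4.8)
The theorem has three parts: (i) the optimal value of the instance is $0$; (ii) there is an $O(1)$-size branch-and-bound proof certifying $-x_3 \geq 0$ (equivalently $x_3 \leq 0$); (iii) every cutting plane proof with variable disjunctions certifying $x_3 \leq 0$ has length $\Omega(h)$. Parts (i) and (ii) are the easy ones and I would dispatch them first. For (i), note that $P_h$ is a simplex with apex $a := (1-\tfrac1h, 1-\tfrac1h, h)$ and base triangle $T$ in the plane $x_3 = 0$ with vertices $(0,0,0),(0,2,0),(2,0,0)$; the only integer points of $T$ are $(0,0,0),(1,0,0),(0,1,0),(1,1,0)$, all with $x_3 = 0$, and since $x_3 \geq 0$ on all of $P_h$ while the apex is the unique point with $x_3 > 0$, any integer point of $P_h$ must lie in the base (its third coordinate is a nonnegative integer, and $x_3 < 1$ everywhere except near the apex, which is not integral for $h > 1$ — one should check the relevant range of $h$ carefully, or simply observe $a \notin \Z^3$). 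Hence $OPT = 0$.

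**Branch-and-bound in $O(1)$.** I would branch on the variable $x_1$ with the disjunction $D_{1,0} = \{x_1 \leq 0\} \cup \{x_1 \geq 1\}$, and then on $x_2$ similarly inside each child. Geometrically, the apex has $x_1 = x_2 = 1 - \tfrac1h \in (0,1)$, so every one of the (at most four) leaves obtained by fixing $x_1 \in \{\leq 0, \geq 1\}$ and $x_2 \in \{\leq 0, \geq 1\}$ excludes the apex; each such leaf is a face (or sub-polytope) of $P_h$ contained in a region where the maximum of $x_3$ is $0$. A short case check — the slab $x_1 \leq 0$ meets $P_h$ in the segment from $(0,0,0)$ to $(0,2,0)$, on which $x_3 = 0$; the slab $x_1 \geq 1$ intersected with $x_2 \leq 0$ gives the segment from $(1,0,0)$ to $(2,0,0)$, again with $x_3 = 0$; and $x_1 \geq 1, x_2 \geq 1$ gives the single point $(1,1,0)$ — establishes that all leaves satisfy $x_3 \leq 0$. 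That is an $O(1)$-size BB proof.

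**The lower bound for cutting planes.** This is the main obstacle and where the real work lies. The intuition is that variable-disjunction cuts are quite weak near the apex: the only variable disjunctions that can cut anything off $P_h$ are $D_{1,0}$ and $D_{2,0}$ (since $0 \leq x_1, x_2 \leq 2 - \tfrac2h < 2$ and $0 \leq x_3$ on $P_h$, and for large $h$ no other integer threshold does anything), and a cut derived from, say, $D_{1,0}$ applied to a current relaxation $C_i$ is a halfspace containing $\conv\big( (C_i \cap \{x_1 \leq 0\}) \cup (C_i \cap \{x_1 \geq 1\}) \big)$. I would show that starting from $P_h$, each such cut can reduce the maximum of $x_3$ over the current relaxation only by an additive amount of order $O(1/h^2)$, or more precisely by a factor bounded away from the full gap, so that $\Omega(h)$ cuts are needed before the max of $x_3$ drops from $h$ down to $0$. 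Concretely, I would track the quantity $t_i := \max\{x_3 : x \in C_i\}$ and argue by an explicit geometric/convexity estimate that $t_{i+1} \geq t_i - c\cdot\frac{t_i}{h}$ or $t_{i+1} \geq (1 - c/h) t_i$ for a universal constant $c$; iterating, $t_N \geq h(1-c/h)^N$, which stays positive unless $N = \Omega(h)$. The key lemma is therefore a one-step estimate: if $C \subseteq P_h$ is convex with $\max_{C} x_3 = t$, then $\conv\big((C\cap\{x_i\le 0\})\cup(C\cap\{x_i\ge1\})\big)$ still contains a point with $x_3 \geq t - O(t/h)$ — this follows because the "bad" apex region of $P_h$ where $x_3$ is close to its max is a thin sliver around the line segment from the centroid-ish base point $(1-\tfrac1h,1-\tfrac1h,0)$ to the apex, and the hyperplanes $x_i = 0$ and $x_i = 1$ pass on either side of the apex's $x_i$-coordinate $1-\tfrac1h$ at distances $1-\tfrac1h$ and $\tfrac1h$; the convex hull operation essentially "reconnects" across the narrow gap of width comparable to the cross-sectional size near height $t$, losing only a proportionally small amount of height. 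I would make this precise by parametrizing cross-sections $P_h \cap \{x_3 = s\}$ (a shrinking triangle) and checking that removing the two slabs and taking the convex hull of what remains still reaches height $t(1 - O(1/h))$. Finally, since the objective is $-x_3$ and the target inequality is $x_3 \leq 0$, a CP proof of length $N$ forces $t_N = 0$, giving the $\Omega(h)$ bound; the Cartesian-product remark then lifts this to all dimensions exactly as in Theorem~\ref{thm:BB<<CP}, because padding with a hypercube $[-\tfrac12,\tfrac12]^{n-3}$ neither creates new useful variable disjunctions nor changes the reachable values of $x_3$.
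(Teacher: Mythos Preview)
Your overall plan matches the paper's: track the maximal value of $x_3$ on the current relaxation and show it shrinks by at most a factor $1-O(1/h)$ per cut. But two concrete gaps would prevent your argument from going through as written.

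First, you assert that ``the only variable disjunctions that can cut anything off $P_h$ are $D_{1,0}$ and $D_{2,0}$,'' citing $0\le x_3$ as the reason no $x_3$-disjunction matters. That is false: $x_3$ ranges over $[0,h]$, so every $D_{3,K}$ with $0\le K<h$ yields a nontrivial cut. The paper handles this separately: it observes that any cut from a disjunction on $x_3$ is valid for the apex point lowered by one unit, i.e., for $(1-\tfrac1h,\,1-\tfrac1h,\,h'-1)$, so each such cut reduces the tracked height by at most~$1$ and thus still contributes to an $\Omega(h)$ count. Without this, your lower bound does not follow. (Also, $x_1,x_2$ range over $[0,2]$, not $[0,2-\tfrac2h]$, since $(2,0,0)$ and $(0,2,0)$ are vertices; this is harmless because the apex lies in $(0,1)^2$, but your stated reason is incorrect.)

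Second, your key one-step lemma---``if $C\subseteq P_h$ is convex with $\max_C x_3=t$ then the variable-disjunction closure of $C$ still reaches height $t(1-O(1/h))$''---is false as stated (take $C$ to be a single point at height $t$ with $x_1=1-\tfrac1h$; both sides of $D_{1,0}$ miss it and the closure is empty). To iterate, you need a hypothesis that survives the step. The paper maintains the invariant that the current relaxation contains the entire simplex $\conv\{(0,0,0),(2,0,0),(0,2,0),\,v^1(h')\}$ with $v^1(h')=(1-\tfrac1h,1-\tfrac1h,h')$: the three base vertices are integer and hence satisfy every disjunctive cut, and an explicit convex-combination check (via the auxiliary point $v^3(h')=(1,\tfrac{h-1}{h+1},h'\tfrac{h}{h+1})$ on the edge to $(2,0,0)$) shows that the point $v^2(h')=(1-\tfrac1h,1-\tfrac1h,h'\tfrac{h-1}{h+1})$ lies in $\conv(Q\cap D_{1,0})$, hence survives every cut from $D_{1,0}$ (and symmetrically $D_{2,0}$). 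That yields $h'\mapsto h'(1-\tfrac{2}{h+1})$ per round for $x_1,x_2$-cuts, which together with the $h'\mapsto h'-1$ bound for $x_3$-cuts gives the $\Omega(h)$ conclusion. Your ``thin sliver'' intuition is pointing at exactly this, but you need to carry the full shrunk simplex, not just the height, through the induction.
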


\begin{remark}\label{rem:CP-poly}
For the case of general polytopes and general split disjunctions in fixed dimension, examples are known in which BB solves the problem in $O(1)$ iterations while any CP proof of optimality takes a number of iterations that is at least polynomial in the size of the input data. An instance of this type can be found in \cite[Lemma 19]{conforti2015reverse}: if $P\subseteq\R^3$ is the convex hull of the points $(0,0,0)$, $(2,0,0)$, $(0,2,0)$ and $(0.5,0.5,h)$, where $h>0$, and $S=\Z^3$, then the rank of the inequality $x_3\le0$ with respect to general split disjunctions is $\Omega(\log h)$.
\end{remark}

The second row of Table~\ref{tab:CP-BB-results} summarizes the results in fixed dimension. 

\section{Proofs of main results}

\subsection{Preliminaries}

We first recall with the following well-known result from LP sensitivity analysis.

{\begin{lemma}\label{lem:LP-sensitivity} Let $P = \{x\in \R^n: \langle a^i, x \rangle \leq b_i\;\; i=1, \ldots, m\}$ be a polyhedron given as the intersection of $m$ halfspaces, and let $\langle d, x \rangle \leq \delta$ be a valid inequality for $P$. Then for any $\epsilon > 0$, there exists $\epsilon' > 0$ such that $\langle d, x \rangle \leq \delta +\epsilon$ is valid for $P_{\epsilon'} := \{x\in \R^n: \langle a^i, x \rangle \leq b_i + \epsilon'\;\; i=1, \ldots, m\}$.
\end{lemma}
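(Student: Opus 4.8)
The plan is to invoke linear programming duality, with the right-hand side perturbation treated as a parameter that does not affect dual feasibility. First I would dispose of the degenerate case $P = \emptyset$. By Farkas' lemma there is $y \in \R^m$, $y \geq 0$, with $\sum_{i=1}^m y_i a^i = 0$ and $\sum_{i=1}^m y_i b_i < 0$; since the latter sum is strictly negative we must have $\sum_{i=1}^m y_i > 0$. Hence for every $\epsilon'$ with $0 < \epsilon' < -(\sum_i y_i b_i)/(\sum_i y_i)$ we still have $\sum_i y_i(b_i + \epsilon') < 0$, so the same $y$ certifies $P_{\epsilon'} = \emptyset$ via Farkas, and $\langle d, x\rangle \leq \delta + \epsilon$ holds vacuously.

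Now assume $P \neq \emptyset$. Since $\langle d, x\rangle \leq \delta$ is valid for $P$, the linear program $\max\{\langle d, x\rangle : \langle a^i, x\rangle \leq b_i,\ i = 1,\dots,m\}$ is feasible and bounded above by $\delta$, hence has a finite optimum $\delta^\ast \leq \delta$. By strong LP duality the dual $\min\{\langle b, y\rangle : \sum_{i=1}^m y_i a^i = d,\ y \geq 0\}$ attains an optimal solution $y^\ast \geq 0$ with $\langle b, y^\ast\rangle = \delta^\ast$. The key point is that the dual feasibility conditions $\sum_i y_i a^i = d$ and $y \geq 0$ do not involve the right-hand side, so $y^\ast$ remains dual-feasible for $\max\{\langle d, x\rangle : x \in P_{\epsilon'}\}$ for every $\epsilon' > 0$. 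Weak duality then gives, for all $x \in P_{\epsilon'}$,
\[
\langle d, x\rangle \;\leq\; \sum_{i=1}^m y^\ast_i (b_i + \epsilon') \;=\; \delta^\ast + \epsilon' \sum_{i=1}^m y^\ast_i \;\leq\; \delta + \epsilon' \sum_{i=1}^m y^\ast_i .
\]
Choosing $\epsilon' := \epsilon \,/\, \bigl(1 + \sum_{i=1}^m y^\ast_i\bigr) > 0$ yields $\langle d, x\rangle \leq \delta + \epsilon$ on $P_{\epsilon'}$, as required.

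I do not expect a genuine obstacle here; this is a standard duality argument. The only two points needing care are: (a) justifying that the primal LP is bounded so that strong duality and hence an explicit dual certificate $y^\ast$ are available — this is exactly where validity of $\langle d, x\rangle \leq \delta$ over the nonempty $P$ is used; and (b) the empty-polyhedron case, which the Farkas argument above handles separately. One could also unify the two cases by applying a Farkas/homogenization argument directly to the infeasible system $\{\langle a^i, x\rangle \leq b_i\}_{i} \cup \{\langle d, x\rangle \geq \delta + \epsilon'\}$ and reading off a uniform $\epsilon'$ from the certificate, but the two-case treatment above is cleaner.
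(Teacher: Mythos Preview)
Your argument is correct and is exactly the standard LP duality/Farkas certificate approach to right-hand-side sensitivity; the paper itself does not spell out a proof but simply cites Schrijver~\cite[Chapter 10, equation (22)]{sch}, which records precisely this affine-Farkas/duality estimate. So your proof is essentially the same as the (cited) one, with the added benefit that you have made the two cases ($P=\emptyset$ versus $P\neq\emptyset$) and the explicit choice of $\epsilon'$ fully transparent.
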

\begin{proof} See, for example, equation (22) in~\cite[Chapter 10]{sch}.
\end{proof}

We will need the following version of the above result.
}
\begin{lemma}\label{lem:distance} Let $C \subseteq \R^n$ be a compact, convex set. Let $a^1, \ldots, a^m, c\in \R^n$, and $b_1, \ldots, b_m,\gamma\in \R$ be such that $\langle c, x \rangle \leq \gamma$ is valid for $C\cap\bigcap_{i=1}^m \{x\in \R^n: \langle a^i, x \rangle \leq b_i\}$. For any $\epsilon > 0$, there exists $\epsilon' > 0$ such that $\langle c, x \rangle \leq \gamma + \epsilon$ is valid for $C\cap\bigcap_{i=1}^m \{x\in \R^n: \langle a^i, x \rangle \leq b_i+\epsilon'\}$.
\end{lemma}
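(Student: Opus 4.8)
The statement is a compactness-plus-finitely-many-constraints strengthening of Lemma~\ref{lem:LP-sensitivity}: we want to absorb the extra closed convex body $C$ into the perturbation argument. The natural plan is to reduce to the polyhedral case by approximating $C$ from outside by a polytope, but that introduces its own error; instead I would argue directly by a compactness/contradiction argument, which avoids having to control a polyhedral outer approximation uniformly.

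First I would set up the contradiction: suppose the conclusion fails for some $\epsilon>0$. Then for every $k\in\N$ there is a point $x^k\in C$ with $\langle a^i,x^k\rangle\le b_i+\tfrac1k$ for all $i=1,\dots,m$ but $\langle c,x^k\rangle>\gamma+\epsilon$. Since $C$ is compact, pass to a convergent subsequence $x^{k_j}\to x^*\in C$. Taking limits, $\langle a^i,x^*\rangle\le b_i$ for every $i$ (each inequality is closed and the perturbation $\tfrac1{k_j}\to0$), so $x^*\in C\cap\bigcap_{i=1}^m\{x:\langle a^i,x\rangle\le b_i\}$; and by continuity of $\langle c,\cdot\rangle$ we get $\langle c,x^*\rangle\ge\gamma+\epsilon>\gamma$. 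This contradicts the hypothesis that $\langle c,x\rangle\le\gamma$ is valid on that set. Hence the conclusion holds: for each $\epsilon>0$ some $\epsilon'>0$ works.

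The only subtlety — and the step I'd expect to need the most care — is the quantifier structure. The contradiction argument as stated produces, for a fixed $\epsilon$, the existence of \emph{some} $\epsilon'>0$, which is exactly what the lemma asks, but one should phrase it so that it is clear we are negating ``$\exists\epsilon'>0$ such that the perturbed inequality is valid''; the negation is ``$\forall\epsilon'>0$ there is a violating point'', and specializing to $\epsilon'=\tfrac1k$ gives the sequence. (Equivalently, one can note that the family of sets $C_{\epsilon'}:=C\cap\bigcap_i\{x:\langle a^i,x\rangle\le b_i+\epsilon'\}$ is decreasing as $\epsilon'\downarrow0$ with intersection $C_0$, each $C_{\epsilon'}$ compact, and the function $x\mapsto\langle c,x\rangle$ attains its max $M(\epsilon')$ on each; $M$ is monotone and, by the finite-intersection/compactness property, $M(\epsilon')\to M(0)\le\gamma$, so $M(\epsilon')\le\gamma+\epsilon$ for $\epsilon'$ small.) I would present the first, cleaner contradiction version.

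A remark worth including: unlike Lemma~\ref{lem:LP-sensitivity}, here we genuinely need compactness of $C$ — without it the limit point $x^*$ may escape to infinity and the conclusion can fail — and we also only get an $\epsilon$-slack (not $\epsilon=0$), because $C$ need not be polyhedral; both points are consistent with how the lemma is invoked later in the proof of Theorem~\ref{thm:CP<=BB}.
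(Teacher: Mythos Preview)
Your proof is correct, but it takes a genuinely different route from the paper's. The paper reduces to Lemma~\ref{lem:LP-sensitivity} via a separating-hyperplane argument: it sets $\tilde C:=C\cap\{x:\langle c,x\rangle\ge\gamma+\epsilon\}$, observes this is compact, convex, and disjoint from the polyhedron $P:=\bigcap_i\{x:\langle a^i,x\rangle\le b_i\}$, strictly separates $\tilde C$ from $P$ by a hyperplane $\langle d,x\rangle=\delta$, and then invokes Lemma~\ref{lem:LP-sensitivity} on $P$ (with the valid inequality $\langle d,x\rangle<\delta$) to find $\epsilon'>0$ such that the same strict inequality holds on the relaxed polyhedron $P_{\epsilon'}$, forcing $\tilde C\cap P_{\epsilon'}=\emptyset$.

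Your sequential-compactness contradiction argument is more elementary and entirely self-contained: it uses only Bolzano--Weierstrass and continuity of linear maps, and in particular bypasses both the separating-hyperplane theorem and Lemma~\ref{lem:LP-sensitivity}. The paper's route has the mild expository advantage of making explicit that this lemma is literally the LP-sensitivity statement with an extra compact convex piece tacked on; your route has the advantage of being shorter and not importing any auxiliary results. Either is perfectly adequate here, and your remark about why compactness of $C$ is essential (and why one only gets an $\epsilon$-slack) is a useful addition.
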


\begin{proof} {Let $P := \{x\in \R^n: \langle a^i, x \rangle \leq b_i\;\; i=1, \ldots, m\}$. Since $\langle c, x \rangle \leq \gamma$ is valid for $C \cap P$, so is $\langle c, x \rangle \leq \gamma + \epsilon$. Therefore, if we define $\tilde C := C \cap \{x\in \R^n: \langle c, x \rangle \geq \gamma + \epsilon\}$, we have that $\tilde C \cap P = \emptyset$. $\tilde C$ is a compact, convex set as it is a closed, convex subset of a compact set. Thus, there exists a (strongly) separating hyperplane given by $d\in \R^n, \delta\in \R$ such that $\langle d,x \rangle \geq \delta$ is valid for $\tilde C$ and $\langle d,x \rangle < \delta$ for all $x \in P$ (see, for example, Problem 3 in Section III.1.3 in~\cite{barv}). By appealing to Lemma~\ref{lem:LP-sensitivity}, there exists $\epsilon' > 0$ such that $\langle d,x \rangle < \delta$ is valid for $P_{\epsilon'}$, using the notation of Lemma~\ref{lem:LP-sensitivity}. In particular, $\tilde C \cap P_{\epsilon'} = \emptyset$, i.e., $C \cap \{x\in \R^n: \langle c, x \rangle \geq \gamma + \epsilon\} \cap P_{\epsilon'} = \emptyset$. Therefore, $\langle c, x \rangle \leq \gamma + \epsilon$ is valid for $C \cap P_{\epsilon'}$, which is what we wish to establish.}
\end{proof}

\begin{lemma}\label{lem:approx-lifting} Let $C \subseteq \R^n$ be a compact, convex set. Let $\langle a, x \rangle \leq b$ be a valid inequality for $C$ that defines the face $F = C \cap \{x \in \R^n: \langle a, x \rangle = b\}$. Let $\langle c, x \rangle \leq \gamma$ be a valid inequality for $F$. Then, for any $\epsilon>0$, there exists $\lambda\geq 0$ such that $\langle c', x \rangle \leq \gamma'$ is valid for $C$ where $c' = c + \lambda a$ and $\gamma' = \gamma +\epsilon + \lambda b$.
\end{lemma}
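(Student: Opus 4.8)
The plan is to separate a point from a compact convex set and then translate the separating inequality into the desired lifted inequality. First I would observe that the conclusion is trivially true (with $\lambda = 0$) if $\langle c, x\rangle \le \gamma$ happens already to be valid for all of $C$; the content of the lemma is the case where it is violated somewhere on $C \setminus F$. So assume there are points of $C$ violating $\langle c,x\rangle \le \gamma$. Fix $\epsilon > 0$ and consider the set $\tilde C := C \cap \{x : \langle c, x\rangle \ge \gamma + \epsilon\}$. This is a closed convex subset of the compact set $C$, hence compact. I claim $\tilde C$ is disjoint from $F$: indeed $\langle c, x\rangle \le \gamma$ is valid on $F$, so no point of $F$ can satisfy $\langle c,x\rangle \ge \gamma + \epsilon$.

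Next I would use that $F = C \cap \{x : \langle a, x\rangle = b\}$ and that $\langle a, x\rangle \le b$ is valid for $C$, so $F = C \cap \{x : \langle a,x \rangle \ge b\}$. The idea is that $\tilde C$, being a compact convex set disjoint from the face $F$, must be ``pushed off'' the hyperplane $\{\langle a, x\rangle = b\}$: since $\langle a,x\rangle \le b$ on $C \supseteq \tilde C$ and $\tilde C \cap \{\langle a,x\rangle = b\} = \tilde C \cap F = \emptyset$, compactness gives $\max_{x \in \tilde C}\langle a, x\rangle = b - \eta$ for some $\eta > 0$ (taking $\eta := b - \max_{\tilde C}\langle a,x\rangle > 0$; if $\tilde C = \emptyset$ we are again in the trivial case $\lambda = 0$). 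Now on $\tilde C$ we have both $\langle c, x\rangle \ge \gamma + \epsilon$ and $\langle a, x\rangle \le b - \eta$, while the target inequality $\langle c', x\rangle \le \gamma'$ with $c' = c + \lambda a$, $\gamma' = \gamma + \epsilon + \lambda b$ reads $\langle c,x\rangle + \lambda\langle a, x\rangle \le \gamma + \epsilon + \lambda b$.

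The key step is now to choose $\lambda$ large enough. I would split into two regions. On $\tilde C$: since $\langle a,x \rangle \le b - \eta$, we get $\langle c', x\rangle = \langle c,x\rangle + \lambda \langle a,x\rangle \le \langle c,x \rangle + \lambda(b-\eta)$; using the bound $\langle c,x\rangle \le M := \max_{x\in C}\langle c,x\rangle$ (finite by compactness), this is $\le M + \lambda b - \lambda\eta$, which is $\le \gamma + \epsilon + \lambda b = \gamma'$ as soon as $\lambda \ge (M - \gamma - \epsilon)/\eta$. On $C \setminus \tilde C$, i.e.\ where $\langle c, x\rangle < \gamma + \epsilon$: here I would use $\langle a, x\rangle \le b$ (valid on all of $C$) to write $\langle c', x\rangle = \langle c,x\rangle + \lambda\langle a,x\rangle < (\gamma + \epsilon) + \lambda b = \gamma'$ for \emph{every} $\lambda \ge 0$. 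Picking $\lambda := \max\{0, (M - \gamma - \epsilon)/\eta\}$ makes $\langle c', x\rangle \le \gamma'$ valid on all of $C = \tilde C \cup (C\setminus\tilde C)$, which is what we want.

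I expect the main (and only real) obstacle to be the degenerate cases and making sure the constant $\eta$ is strictly positive: this is where compactness of $C$ is essential — without it, $\tilde C$ need not be compact, $\max_{\tilde C}\langle a,x\rangle$ might equal $b$ even though it is not attained, and no finite $\lambda$ would work. The appearance of $\epsilon$ in $\gamma' = \gamma + \epsilon + \lambda b$ is exactly the slack that buys the strict separation on $F$ and hence the positivity of $\eta$; this is analogous to the role of $\epsilon$ in Lemma~\ref{lem:distance}, and in fact one could alternatively derive this lemma from Lemma~\ref{lem:distance} applied with the single inequality $\langle -a, x\rangle \le -b$, but the direct argument above seems cleaner. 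Everything else is routine: the separating hyperplane is simply the face-defining inequality $\langle a, x\rangle \le b$ itself, rescaled, so no appeal to a general separation theorem is even needed.
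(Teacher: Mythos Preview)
Your proposal is correct and follows essentially the same approach as the paper: both define the set $\tilde C = X := \{x \in C : \langle c,x\rangle \ge \gamma+\epsilon\}$, use compactness to get a strict gap $b - \langle a,x\rangle > 0$ on this set, and then split into the cases $x\in X$ and $x\notin X$. The only cosmetic difference is the choice of $\lambda$: the paper takes the tight value $\lambda = \max_{x\in X}\frac{\langle c,x\rangle - \gamma - \epsilon}{b - \langle a,x\rangle}$, whereas you take the cruder (but still sufficient) $\lambda = (M-\gamma-\epsilon)/\eta$ obtained by bounding numerator and denominator separately.
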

\begin{proof} Define the set $X = \{x \in C: \langle c, x \rangle \geq \gamma+\epsilon \}$. {If $X = \emptyset$, then $\langle c, x \rangle \leq \gamma+\epsilon$ is valid for $C$ and therefore $\lambda = 0$ works. Otherwise,} note that $X$ is compact and any $\bar x \in X$ is not in $F$ and therefore $\langle a, \bar x \rangle < b$. Define $$\lambda = \max_{x \in X} \frac{\langle c, x \rangle - \gamma -\epsilon}{b - \langle a, x \rangle},$$ which is a well-defined real number because we are maximizing a continuous function (the function has strictly positive denominator for all $x\in X$ by the argument above) over a compact set. For any $\bar x \in C$, either $\bar x \in X$ or $\bar x \not\in X$. If $\bar x \in X$, by definition of $\lambda$ above, $\langle c', \bar x \rangle \leq \gamma'$. If $\bar x \not\in X$, then $\langle c', \bar x \rangle = \langle c, \bar x \rangle + \langle\lambda a, \bar x \rangle \leq \langle c, \bar x \rangle + \lambda b \leq \gamma+\epsilon + \lambda b$, where the first inequality follows from the fact that $\langle a, x \rangle \leq b$ is a valid inequality for $C$ and the second inequality follows from the fact that $\bar x \not\in X$. \end{proof}

\begin{corollary}\label{cor:approx-lifting} If $C$ is a polytope, then the above theorem holds with $\epsilon = 0$ as well.
\end{corollary}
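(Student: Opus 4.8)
The plan is to bypass the perturbation/limit argument behind Lemma~\ref{lem:approx-lifting} and instead invoke strong LP duality (the affine form of Farkas' lemma), which in the polyhedral setting produces the lifting multiplier $\lambda$ exactly, rather than only up to an $\epsilon$. I would first dispose of two degenerate cases. If $C=\emptyset$ there is nothing to prove. If $F=\emptyset$, then since $C$ is compact and $\langle a,x\rangle\le b$ is valid for $C$ we have $\beta:=\max_{x\in C}\langle a,x\rangle<b$, and taking $\lambda\ge0$ large enough in terms of $b-\beta$ and $\max_{x\in C}\langle c,x\rangle-\gamma$ makes $\langle c+\lambda a,x\rangle\le\gamma+\lambda b$ hold throughout $C$ (this subcase uses only compactness). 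So assume $C\ne\emptyset$ and $F\ne\emptyset$.

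For the main case, write $C=\{x:\langle a,x\rangle\le b,\ \langle a^i,x\rangle\le b_i,\ i=1,\dots,m\}$ — we may list $\langle a,x\rangle\le b$ among the defining inequalities, since it is valid for $C$ — so that $F=\{x:\langle a,x\rangle=b,\ \langle a^i,x\rangle\le b_i,\ i=1,\dots,m\}$. The inequality $\langle c,x\rangle\le\gamma$ is valid over the nonempty polyhedron $F$, and the linear program $\max\{\langle c,x\rangle:x\in F\}$ has a finite optimum because $F\subseteq C$ is bounded; hence by strong LP duality there exist a free multiplier $\nu\in\R$ for the equation $\langle a,x\rangle=b$ and nonnegative multipliers $\eta_1,\dots,\eta_m$ with $c=\nu a+\sum_i\eta_i a^i$ and $\nu b+\sum_i\eta_i b_i\le\gamma$. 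I would then set $\lambda:=\max\{0,-\nu\}$, $c':=c+\lambda a$, $\gamma':=\gamma+\lambda b$ (i.e.\ with $\epsilon=0$): now $c'=(\nu+\lambda)a+\sum_i\eta_i a^i$ is a nonnegative combination of the constraint normals of $C$, so for every $x\in C$ one gets $\langle c',x\rangle\le(\nu+\lambda)b+\sum_i\eta_i b_i=(\nu b+\sum_i\eta_i b_i)+\lambda b\le\gamma+\lambda b=\gamma'$, which is the claim, and $c',\gamma'$ have exactly the stated form.

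The only genuinely delicate point is the one that also explains why $\epsilon>0$ was unavoidable in Lemma~\ref{lem:approx-lifting}: there the multiplier is obtained as a supremum of ratios $\frac{\langle c,x\rangle-\gamma}{b-\langle a,x\rangle}$ whose denominator vanishes precisely on $F$, so finiteness is not guaranteed without pushing away from $F$ by $\epsilon$. Polyhedrality is exactly what rescues us here — LP duality returns a finite multiplier vector with no perturbation — so the main thing to check carefully is that strong duality applies, i.e.\ that the LP over $F$ has a finite optimum (which follows from boundedness of the polytope $C$) and hence the dual attains a value $\le\gamma$. An alternative route — letting $\epsilon\to0$ in Lemma~\ref{lem:approx-lifting} and extracting a convergent subsequence of the resulting $\lambda(\epsilon)$ — would require a uniform bound on $\lambda(\epsilon)$, which is not transparent; the duality argument sidesteps this entirely.
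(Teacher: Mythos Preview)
Your argument is correct. The paper takes a shorter, more elementary route: rather than invoking LP duality, it simply observes that when $C$ is a polytope the supremum
\[
\lambda \;=\; \max\Big\{0,\ \max_{v}\frac{\langle c,v\rangle-\gamma}{\,b-\langle a,v\rangle\,}\Big\}
\]
can be taken over the finitely many \emph{vertices} $v$ of $C$ that are not in $F$. Each such vertex has $b-\langle a,v\rangle>0$, so the maximum is a well-defined real number; validity of $\langle c+\lambda a,x\rangle\le\gamma+\lambda b$ then only needs to be checked at vertices, where it holds by construction on vertices outside $F$ and by the hypothesis on vertices in $F$. This is a one-line modification of the proof of Lemma~\ref{lem:approx-lifting} itself, replacing the compact set $X$ by a finite set of vertices. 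Your duality argument is more structural and makes the role of polyhedrality explicit through strong LP duality, at the cost of bringing in a heavier tool and handling the edge cases $C=\emptyset$, $F=\emptyset$ separately; the paper's vertex argument absorbs those cases automatically (if every vertex lies in $F$ then $C=F$ and $\lambda=0$ works).
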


\begin{proof} In this case, $\lambda$ can be defined by maximizing $\frac{\langle c, x \rangle - \gamma}{b - \langle a, x \rangle}$ over all vertices $x$ of $C$ not in $F$ ($\lambda$ should be defined as $0$ if the maximum is negative).\end{proof}

Motivated by Lemma~\ref{lem:approx-lifting} and Corollary~\ref{cor:approx-lifting}, we make the following definition.

\begin{definition} {Let $a, c \in \R^n$, $b, \gamma \in \R$ and $\epsilon > 0$. An inequality $\langle c', x \rangle \leq \gamma'$ is said to be an {\em $\epsilon-$approximate rotation of $\langle c, x \rangle \leq \gamma$ with respect to $\langle a, x \rangle = b$} if there exists $\lambda\geq 0$ such that $c' = c + \lambda a$ and $\gamma' \leq \gamma +\epsilon+ \lambda b$. }
\end{definition}

\begin{remark}\label{rem:rotation}If $\langle c', x \rangle \leq \gamma'$ is an $\epsilon-$approximate rotation of $\langle c, x \rangle \leq \gamma$ with respect to $\langle a, x \rangle = b$, then $\{x\in \R^n: \langle c', x \rangle \leq \gamma',\;\; \langle a,x\rangle = b\} \subseteq \{x\in \R^n: \langle c, x \rangle \leq \gamma +\epsilon,\;\; \langle a,x\rangle = b\}$. \end{remark}

\begin{theorem}\label{thm:CP-proof-lifting} Let $C \subseteq \R^n$ be a compact, convex set. Let $\langle a, x \rangle \leq b$ be a valid inequality for $C$ that defines the face $F = C \cap \{x \in \R^n: \langle a, x \rangle = b\}$. Let $S$ be any non-convexity and $\D$ be some family of valid disjunctions. Suppose $\langle c, x \rangle \leq \gamma$ is a valid inequality for $F \cap S$ and $\langle a_1, x \rangle \leq b_1, \ldots, \langle a_N, x \rangle \leq b_N$ is a cutting plane proof of $\langle c, x \rangle \leq \gamma$ based on $\D$ (with respect to $F \cap S$). Then, for any $\bar\epsilon > 0$, there exists a sequence of inequalities $\langle a_1', x \rangle \leq b_1', \ldots, \langle a_N', x \rangle \leq b_N'$ and an inequality $\langle c', x \rangle \leq \gamma'$ such that all of the following hold.
\begin{enumerate}
\item For each $i=1, \ldots, N$, $\langle a_i', x \rangle \leq b_i'$ is an $\bar\epsilon$-approximate rotation of $\langle a_i, x \rangle \leq b_i$ with respect to {$\langle a, x \rangle = b$}.
\item $\langle c', x \rangle \leq \gamma'$ is an $\bar\epsilon$-approximate rotation of $\langle c, x \rangle \leq \gamma$ with respect to {$\langle a, x \rangle = b$}.
\item $\langle a_1', x \rangle \leq b_1', \ldots, \langle a_N', x \rangle \leq b_N'$ is a cutting plane proof of $\langle c', x \rangle \leq \gamma'$ based on $\D$, with respect to $C \cap S$.
\end{enumerate}

\end{theorem}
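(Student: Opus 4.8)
The plan is to re-run the given cutting plane proof verbatim but over $C$ in place of $F$, turning each halfspace $\langle a_i,x\rangle\le b_i$ into an $\bar\epsilon$-approximate rotation of itself via Lemma~\ref{lem:approx-lifting} and likewise rotating $\langle c,x\rangle\le\gamma$ at the end. The one genuine difficulty is that every invocation of Lemma~\ref{lem:approx-lifting} loosens the inequality by a positive amount, so a naive step-by-step lifting would invalidate the later derivations in the proof; the fix is to decide in advance, using Lemma~\ref{lem:distance}, how much slack can be tolerated at each step.

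\emph{Phase 1 (choosing tolerances).} Using Lemma~\ref{lem:distance} I would first produce reals $\eta_N,\eta_{N-1},\dots,\eta_1\in(0,\bar\epsilon/2]$, \emph{determined in this order}, such that, writing $\hat H_j:=\{x:\langle a_j,x\rangle\le b_j+\eta_j\}$: (i) for every $i$ the inequality $\langle a_i,x\rangle\le b_i+\eta_i/2$ is valid for $F\cap\bigcap_{j<i}\hat H_j\cap D_i$, and (ii) $\langle c,x\rangle\le\gamma+\bar\epsilon/2$ is valid for $F\cap\bigcap_{j\le N}\hat H_j$. For (i) with a fixed $i$: writing $D_i=Q_1\cup\dots\cup Q_k$, apply Lemma~\ref{lem:distance} to each compact convex set $F\cap Q_t$ with halfspaces $\{\langle a_j,x\rangle\le b_j\}_{j<i}$ and the valid inequality $\langle a_i,x\rangle\le b_i$ (valid there because $H_i$ is derived from $D_i$ applied to $F_{i-1}$), with slack $\eta_i/2$; this yields a threshold $\rho(i,\eta_i)>0$ so that requiring $\eta_j\le\rho(i,\eta_i)$ for all $j<i$ suffices. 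For (ii), Lemma~\ref{lem:distance} applied to $F$ with all $H_j$ and slack $\bar\epsilon/2$ gives a threshold $\epsilon''>0$ depending only on the original data, so that $\eta_j\le\epsilon''$ for all $j$ suffices. Since $\rho(i,\cdot)$ and $\epsilon''$ involve only $\eta_i$ and fixed data, choosing $\eta_N$ first and then each $\eta_i\le\min\{\bar\epsilon/2,\ \epsilon'',\ \min_{k>i}\rho(k,\eta_k)\}$ makes all constraints simultaneously satisfiable.

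\emph{Phase 2 (assembling the lifted proof).} I would then construct $\lambda_1,\dots,\lambda_N\ge0$ and set $a_i':=a_i+\lambda_i a$, $b_i':=b_i+\eta_i+\lambda_i b$, $H_i':=\{x:\langle a_i',x\rangle\le b_i'\}$ by forward induction on $i$, maintaining the invariant that $H_1',\dots,H_i'$ is a legal partial cutting plane proof over $C$ (each $H_j'$ derived from $D_j$ applied to $C_{j-1}':=C\cap\bigcap_{\ell<j}H_\ell'$) and that $C_i'\cap\{\langle a,x\rangle=b\}\subseteq F\cap\bigcap_{j\le i}\hat H_j$. For the inductive step, the invariant together with Phase 1(i) (used with index $i+1$) shows $\langle a_{i+1},x\rangle\le b_{i+1}+\eta_{i+1}/2$ is valid on $(C_i'\cap Q_t)\cap\{\langle a,x\rangle=b\}$ for every piece $Q_t$ of $D_{i+1}$, and that set is precisely the face of the compact convex set $C_i'\cap Q_t$ cut out by $\langle a,x\rangle\le b$; so Lemma~\ref{lem:approx-lifting} with slack $\eta_{i+1}/2$ gives $\mu_t\ge0$ with $\langle a_{i+1}+\mu_t a,x\rangle\le b_{i+1}+\eta_{i+1}+\mu_t b$ valid on $C_i'\cap Q_t$. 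Setting $\lambda_{i+1}:=\max_t\mu_t$ and using that $\langle a,x\rangle\le b$ holds on all of $C$ together with $\lambda_{i+1}-\mu_t\ge0$, the single inequality $\langle a_{i+1}',x\rangle\le b_{i+1}'$ becomes valid on every $C_i'\cap Q_t$, hence on $C_i'\cap D_{i+1}$, so $H_{i+1}'$ is a cutting plane derived from $D_{i+1}$ applied to $C_i'$; and on $\{\langle a,x\rangle=b\}$ one has $x\in H_{i+1}'\iff\langle a_{i+1},x\rangle\le b_{i+1}+\eta_{i+1}\iff x\in\hat H_{i+1}$, which re-establishes the invariant. After step $N$, Phase 1(ii) gives $\langle c,x\rangle\le\gamma+\bar\epsilon/2$ valid on the face $C_N'\cap\{\langle a,x\rangle=b\}$ of $C_N'$, so one last application of Lemma~\ref{lem:approx-lifting} with slack $\bar\epsilon/2$ produces $c':=c+\mu a$, $\gamma':=\gamma+\bar\epsilon+\mu b$ valid on $C_N'$. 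Since every $\eta_i\le\bar\epsilon$, items 1 and 2 of the statement hold, and $H_1',\dots,H_N'$ is by construction a cutting plane proof of $\langle c',x\rangle\le\gamma'$ with respect to $C\cap S$, which is item 3.

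\emph{Expected main obstacle.} The crux is the slack bookkeeping, and specifically the reason the tolerances $\eta_i$ must be fixed in \emph{reverse} order in Phase 1: relaxing an earlier halfspace $\hat H_j$ weakens every subsequent derivation in the proof, so the permissible relaxation of $H_j$ can only be quantified once those of $H_{j+1},\dots,H_N$ are already fixed. A secondary technical point is that a disjunction is a finite union of convex sets, so Lemmas~\ref{lem:distance} and~\ref{lem:approx-lifting} must be applied piecewise to each $Q_t$ and the resulting multipliers merged by taking a maximum — which is legitimate precisely because $\langle a,x\rangle\le b$ is valid on all of $C$, not merely on the relevant piece.
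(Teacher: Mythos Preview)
Your proposal is correct and follows essentially the same approach as the paper's proof: the key ingredients---reverse-order selection of tolerances via Lemma~\ref{lem:distance}, piecewise lifting on each $Q_t$ via Lemma~\ref{lem:approx-lifting}, and merging the resulting multipliers by taking their maximum using validity of $\langle a,x\rangle\le b$ on all of $C$---are identical. The only difference is organizational: the paper packages the argument as an induction on $N$ (so the tolerance bookkeeping is hidden in the recursion), whereas you unroll this into an explicit two-phase iterative construction, which makes the reverse-order dependence of the $\eta_i$ more transparent but is otherwise the same proof.
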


\begin{proof} We prove the theorem by induction on the length $N$ of the CP proof. If $N=0$, then $\langle c, x \rangle \leq \gamma$ is a valid inequality for $F$ itself and the result follows from Lemma~\ref{lem:approx-lifting}. Consider $N\geq 1$. Fix an arbitrary $\bar \epsilon > 0$ as the ``error parameter". Apply Lemma~\ref{lem:distance} with $C=F$, $m = N$, $a^1, \ldots, a^N, c$, $b_1, \ldots, b_N, \gamma$ and $\epsilon = \frac12\bar\epsilon$ to get $\epsilon' > 0$. Set $\hat\epsilon = \frac12\min\{\epsilon',\bar\epsilon\}$.

Let $D = Q_1 \cup \ldots \cup Q_k\in \D$ be the disjunction used to derive $\langle a_N, x \rangle \leq b_N$ for $F \cap \bigcap_{i=1}^{N-1} \{x \in \R^n: \langle a_i, x \rangle \leq b_i\}$. For each $j=1, \ldots, k$, let $\epsilon_j > 0$ be defined by applying Lemma~\ref{lem:distance} with $C=Q_j \cap F$, $m = N-1$, $a^1, \ldots, a^{N-1}$, $c = a^N$, $b_1, \ldots, b_{N-1}$, $\gamma = b_N$ and $\epsilon = \hat \epsilon$. Define $\epsilon^\star = \min\{\hat\epsilon,\min_{j=1}^k \epsilon_j\}$.

By the induction hypothesis applied to the CP proof $\langle a_1, x \rangle \leq b_1, \ldots, \langle a_{N-1}, x \rangle \leq b_{N-1}$, viewed as a proof of $\langle a_{N-1}, x \rangle \leq b_{N-1}$, with $\epsilon = \epsilon^\star$, there exists a sequence of inequalities $\langle a_1', x \rangle \leq b_1', \ldots, \langle a_{N-1}', x \rangle \leq b_{N-1}'$ such that $\langle a_i', x \rangle \leq b_i'$ is an $\epsilon^\star$-approximate rotation of $\langle a_i, x \rangle \leq b_i$ for each $i=1, \ldots, N-1$ and $\langle a_1', x \rangle \leq b_1', \ldots, \langle a_{N-1}', x \rangle \leq b_{N-1}'$ is a CP proof of $\langle a_{N-1}', x \rangle \leq b_{N-1}'$ based on $\D$, with respect to $C \cap S$ {(if $N=1$, then we consider the trivial CP proof of length $0$ for the trivial inequality $\langle 0, x \rangle \leq 1$.)}

By Remark~\ref{rem:rotation}, $F\cap \{x \in \R^n: \langle a_i', x \rangle \leq b_i'\} \subseteq F\cap \{x \in \R^n: \langle a_i, x \rangle \leq b_i +\epsilon^\star\}$. Since $\langle a_N, x \rangle \leq b_N$ is valid for $D \cap F \cap \bigcap_{i=1}^{N-1} \{x \in \R^n: \langle a_i, x \rangle \leq b_i\}$, it is valid for $Q_j \cap F \cap \bigcap_{i=1}^{N-1} \{x \in \R^n: \langle a_i, x \rangle \leq b_i\}$ for every $j=1, \ldots, k$. By Lemma~\ref{lem:distance} and the choice of $\epsilon^\star$, $\langle a_N, x \rangle \leq b_N + \hat\epsilon$ is valid for $Q_j \cap F\cap \bigcap_{i=1}^{N-1}\{x \in \R^n: \langle a_i', x \rangle \leq b_i'\}$ for every $j=1, \ldots, k$. Since $F$ is a face of $C$ induced by $\langle a, x \rangle \leq b$, we have that $Q_j \cap F\cap \bigcap_{i=1}^{N-1}\{x \in \R^n: \langle a_i', x \rangle \leq b_i'\}$ is a face of $Q_j \cap C \cap \bigcap_{i=1}^{N-1}\{x \in \R^n: \langle a_i', x \rangle \leq b_i'\}$ induced by the same inequality. By Lemma~\ref{lem:approx-lifting}, there exists an $\hat\epsilon$-approximate rotation of $\langle a_N, x \rangle \leq b_N + \hat\epsilon$ valid for $Q_j \cap C \cap \bigcap_{i=1}^{N-1}\{x \in \R^n: \langle a_i', x \rangle \leq b_i'\}$ for every $j=1, \ldots, k$. In other words, there exist $\lambda_j \geq 0$, $j=1, \ldots, k$ such that
$\langle a_N + \lambda_j a, x \rangle \leq (b_N + \hat\epsilon) + \hat\epsilon + \lambda_j b$ is valid for $Q_j \cap C \cap \bigcap_{i=1}^{N-1}\{x \in \R^n: \langle a_i', x \rangle \leq b_i'\}$.
Set $\lambda = \max_{j=1}^n \lambda_j$, $a_N' = a_N + \lambda a$ and $b_N' = b_N+2\hat\epsilon + \lambda b$. Thus, $\langle a_N', x \rangle \leq b_N'$ is valid for $Q_j \cap C \cap \bigcap_{i=1}^{N-1}\{x \in \R^n: \langle a_i', x \rangle \leq b_i'\}$ for all $j=1, \ldots, k$, and therefore for $D \cap C \cap \bigcap_{i=1}^{N-1}\{x \in \R^n: \langle a_i', x \rangle \leq b_i'\}$. Since $2\hat\epsilon \leq \bar\epsilon$ by choice, $\langle a_N', x \rangle \leq b_N'$ is an $\bar\epsilon$-approximate rotation of $\langle a_N, x \rangle \leq b_N$, and thus condition 1. is satisfied for $i=1, \ldots, N$.

From the hypothesis, $\langle c, x \rangle \leq \gamma$ is valid for $F\cap \bigcap_{i=1}^{N}\{x \in \R^n: \langle a_i, x \rangle \leq b_i\}$. The definition of $\epsilon'$ implies that $\langle c, x \rangle \leq \gamma + \frac{\bar\epsilon}{2}$ is valid for $F\cap \bigcap_{i=1}^{N}\{x \in \R^n: \langle a_i, x \rangle \leq b_i + \epsilon'\}$. Also, by choice $\epsilon^\star \leq \hat\epsilon \leq 2\hat\epsilon \leq \epsilon'$ and so $F\cap \bigcap_{i=1}^{N}\{x \in \R^n: \langle a_i, x \rangle \leq b_i + \epsilon'\} \supseteq F\cap \bigcap_{i=1}^{N}\{x \in \R^n: \langle a_i, x \rangle \leq b_i + 2\hat\epsilon\} \supseteq F\cap \bigcap_{i=1}^{N}\{x \in \R^n: \langle a_i', x \rangle \leq b_i'\}$, where the second containment follows from Remark~\ref{rem:rotation}. Since $F\cap \bigcap_{i=1}^{N}\{x \in \R^n: \langle a_i', x \rangle \leq b_i'\}$ is a face of $C\cap \bigcap_{i=1}^{N}\{x \in \R^n: \langle a_i', x \rangle \leq b_i'\}$ induced by $\langle a, x \rangle \leq b$, by Lemma~\ref{lem:approx-lifting} with $\epsilon = \frac{\bar\epsilon}{2}$,
there exists $\hat\lambda\geq 0$ such that $\langle c', x \rangle \leq \gamma'$ is valid for $C\cap \bigcap_{i=1}^{N}\{x \in \R^n: \langle a_i', x \rangle \leq b_i'\}$, with $c' = c + \hat\lambda a$ and $\gamma' = (\gamma + \frac{\bar\epsilon}{2}) + \frac{\bar\epsilon}{2}+ \hat\lambda b = \gamma +\bar\epsilon + \hat\lambda b$. Thus, conditions 2. and 3. of the theorem are satisfied.
\end{proof}

\begin{corollary}\label{cor:rotation-poly} If $C$ is a polytope, the statement of Theorem~\ref{thm:CP-proof-lifting} holds with $\bar\epsilon = 0$.
\end{corollary}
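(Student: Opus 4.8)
The statement is that when $C$ is a polytope, Theorem~\ref{thm:CP-proof-lifting} holds with $\bar\epsilon = 0$, i.e., each $\langle a_i', x\rangle \le b_i'$ is an (exact, $0$-approximate) rotation of $\langle a_i, x\rangle \le b_i$ with respect to $\langle a, x\rangle = b$, the inequality $\langle c', x\rangle \le \gamma'$ is a $0$-approximate rotation of $\langle c, x\rangle \le \gamma$, and $\langle a_1', x\rangle \le b_1', \ldots, \langle a_N', x\rangle \le b_N'$ is a genuine cutting plane proof of $\langle c', x\rangle \le \gamma'$ based on $\D$ with respect to $C \cap S$. The plan is to redo the induction in the proof of Theorem~\ref{thm:CP-proof-lifting}, but exploiting that on a polytope we need no slack. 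The key enabling facts are Corollary~\ref{cor:approx-lifting} (the $\epsilon = 0$ version of the approximate-lifting lemma, available precisely because $C$, hence every face and every $Q_j \cap C \cap \bigcap \{\langle a_i', x\rangle \le b_i'\}$, is a polytope) and the observation that Lemma~\ref{lem:distance} is only used in the original proof to create the room that Corollary~\ref{cor:approx-lifting} now obviates.

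First I would set up the induction on the CP-proof length $N$ exactly as before. The base case $N = 0$: $\langle c, x\rangle \le \gamma$ is valid for the face $F$, and Corollary~\ref{cor:approx-lifting} gives $\lambda \ge 0$ with $\langle c + \lambda a, x\rangle \le \gamma + \lambda b$ valid for $C$, which is the desired $0$-approximate rotation. For the inductive step, let $D = Q_1 \cup \cdots \cup Q_k \in \D$ be the disjunction used to derive $\langle a_N, x\rangle \le b_N$ from $F \cap \bigcap_{i=1}^{N-1}\{\langle a_i, x\rangle \le b_i\}$. Apply the induction hypothesis to the truncated proof $\langle a_1, x\rangle \le b_1, \ldots, \langle a_{N-1}, x\rangle \le b_{N-1}$ (viewed as a $0$-slack CP proof of its own last line with respect to $C \cap S$), obtaining $\langle a_i', x\rangle \le b_i'$ that are exact rotations of $\langle a_i, x\rangle \le b_i$ for $i = 1, \ldots, N-1$ and form a CP proof with respect to $C \cap S$. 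Now the crucial point, which replaces the Lemma~\ref{lem:distance} step: because $\langle a_i', x\rangle \le b_i'$ is an exact rotation with respect to $\langle a, x\rangle = b$, Remark~\ref{rem:rotation} with $\epsilon = 0$ gives $F \cap \{\langle a_i', x\rangle \le b_i'\} = F \cap \{x : \langle a_i', x\rangle \le b_i',\ \langle a, x\rangle = b\} \subseteq F \cap \{\langle a_i, x\rangle \le b_i\}$ — with \emph{no} $\epsilon^\star$ slack. Hence $\langle a_N, x\rangle \le b_N$, being valid for $Q_j \cap F \cap \bigcap_{i=1}^{N-1}\{\langle a_i, x\rangle \le b_i\}$, is valid for $Q_j \cap F \cap \bigcap_{i=1}^{N-1}\{\langle a_i', x\rangle \le b_i'\}$ directly, for each $j$. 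That set is a face (induced by $\langle a, x\rangle \le b$) of the polytope $Q_j \cap C \cap \bigcap_{i=1}^{N-1}\{\langle a_i', x\rangle \le b_i'\}$, so Corollary~\ref{cor:approx-lifting} yields $\lambda_j \ge 0$ with $\langle a_N + \lambda_j a, x\rangle \le b_N + \lambda_j b$ valid on that polytope. Setting $\lambda = \max_j \lambda_j$, $a_N' = a_N + \lambda a$, $b_N' = b_N + \lambda b$ makes $\langle a_N', x\rangle \le b_N'$ valid for every $Q_j \cap C \cap \bigcap_{i=1}^{N-1}\{\langle a_i', x\rangle \le b_i'\}$ and thus for $D \cap C \cap \bigcap_{i=1}^{N-1}\{\langle a_i', x\rangle \le b_i'\}$; it is an exact rotation of $\langle a_N, x\rangle \le b_N$, so the extended sequence $\langle a_1', x\rangle \le b_1', \ldots, \langle a_N', x\rangle \le b_N'$ is a CP proof with respect to $C \cap S$ and condition~1 holds for all $i$. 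Finally, since $\langle c, x\rangle \le \gamma$ is valid for $F \cap \bigcap_{i=1}^{N}\{\langle a_i, x\rangle \le b_i\}$, and $F \cap \bigcap_{i=1}^{N}\{\langle a_i', x\rangle \le b_i'\} \subseteq F \cap \bigcap_{i=1}^{N}\{\langle a_i, x\rangle \le b_i\}$ by Remark~\ref{rem:rotation} with $\epsilon = 0$, the inequality $\langle c, x\rangle \le \gamma$ is valid for the face $F \cap \bigcap_{i=1}^{N}\{\langle a_i', x\rangle \le b_i'\}$ of the polytope $C \cap \bigcap_{i=1}^{N}\{\langle a_i', x\rangle \le b_i'\}$; one more application of Corollary~\ref{cor:approx-lifting} (with $\epsilon = 0$) produces $\hat\lambda \ge 0$ and $c' = c + \hat\lambda a$, $\gamma' = \gamma + \hat\lambda b$ with $\langle c', x\rangle \le \gamma'$ valid for $C \cap \bigcap_{i=1}^{N}\{\langle a_i', x\rangle \le b_i'\}$, giving conditions~2 and~3.

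I do not expect a serious obstacle here; the work is entirely bookkeeping. The one point that needs a moment of care is verifying that all the sets on which Corollary~\ref{cor:approx-lifting} is invoked are genuinely polytopes: $C$ is a polytope by hypothesis, each $Q_j$ is a closed convex set but — for Corollary~\ref{cor:approx-lifting} to apply — we need $Q_j \cap C$ to be a polytope, which holds when the members of the disjunctions in $\D$ are polyhedral (the families in the paper — variable and split disjunctions — consist of intersections/unions of halfspaces, so this is fine); in the fully general statement one should either assume $\D$ has polyhedral members or note that the corollary as stated in the excerpt is understood with this restriction. Apart from that, the substantive simplification over Theorem~\ref{thm:CP-proof-lifting} is exactly that Lemma~\ref{lem:distance} — the device for manufacturing $\epsilon'$-room around the halfspaces — is never needed, because exact rotations already satisfy the required containments with no slack, and Corollary~\ref{cor:approx-lifting} lifts faces of polytopes back to the polytope without losing anything. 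So the proof is a line-by-line copy of the proof of Theorem~\ref{thm:CP-proof-lifting} with every ``$\hat\epsilon$'', ``$\epsilon^\star$'', ``$\epsilon'$'' and ``$\bar\epsilon$'' set to $0$, every invocation of Lemma~\ref{lem:distance} deleted, and every invocation of Lemma~\ref{lem:approx-lifting} replaced by Corollary~\ref{cor:approx-lifting}.
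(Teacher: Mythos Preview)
Your proposal is correct and follows essentially the same approach as the paper's own proof, which simply states that one replaces every appeal to Lemma~\ref{lem:approx-lifting} by Corollary~\ref{cor:approx-lifting} and drops Lemma~\ref{lem:distance}; you have spelled this out in detail. Your side remark that applying Corollary~\ref{cor:approx-lifting} to $Q_j\cap C\cap\bigcap_i\{\langle a_i',x\rangle\le b_i'\}$ tacitly requires the $Q_j$ to be polyhedral is a valid technical point the paper leaves implicit (and which holds in all the applications, since the disjunctions used there are variable or split disjunctions).
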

\begin{proof}
This is handled by appealing to Corollary~\ref{cor:approx-lifting} instead of Lemma~\ref{lem:approx-lifting} in the proof of Theorem~\ref{thm:CP-proof-lifting}. There will be no need for Lemma~\ref{lem:distance} anymore in this case.
\end{proof}

The above proof is inspired by ideas in Dash~\cite{dash2002exponential}, where the polyhedral case is analyzed. In the general convex case, faces may not be exposed and the above proof deals with this issue.

\subsection{Proofs of the main results}
\begin{proof}[Proof of Theorem~\ref{thm:CP<=BB}] 
We prove this by induction on the number $k$ of branching nodes in the BC proof. If $k=0$, then we have a CP proof and we are done. Now consider a BC proof with $k\geq 1$ branching nodes. Note that all nodes in the tree represent subsets of $C$ obtained by intersecting $C$ with additional halfspaces (either cutting planes or disjunction inequalities of the form $x_i \leq 0$ or $x_i \geq 1$), i.e., each node in the tree is a compact, convex subset of $C$.
Consider any {\em maximal depth} branching node, that is, all its descendants are cutting nodes or leaves. Suppose this branching node represents a compact, convex subset $C' \subseteq C$ and uses the disjunction $\{x: x_i \leq 0\} \cup \{x: x_i \geq 1\}$. Let the two children of $C'$ be $C_1 = C' \cap \{x: x_i \leq 0\}$ and $C_2 = C' \cap \{x: x_i \geq 1\}$, which are both faces of $C'$.

Let $\epsilon' =\frac{\epsilon}{2}$. Since $C'$ corresponds to a maximal depth branching node, the BC proof under consideration yields a CP proof of $\langle c, x\rangle \leq \gamma$ with respect to $C_1\cap S$. Using Theorem~\ref{thm:CP-proof-lifting} with $\bar\epsilon = \epsilon'$, we can find a CP proof of $\langle c_1, x\rangle \leq \gamma_1$ with respect to $C'\cap S$ such that $\langle c_1, x\rangle \leq \gamma_1$ is an $\epsilon'$-approximate rotation of $\langle c, x\rangle \leq \gamma$ with respect to {$x_i = 0$}. In other words, there exists $\lambda_1 \geq 0$ such that $c_1 = c+ \lambda_1 e^i$ and $\gamma_1\leq \gamma + \epsilon'$, where $e^i$ is the $i$-th standard unit vector. Similarly, applying Theorem~\ref{thm:CP-proof-lifting} to the CP proof of $\langle c, x\rangle \leq \gamma$ with respect to $C_2\cap S$ (the other branch obtained from $x_i \geq 1$), we can find a CP proof of $\langle c_2, x\rangle \leq \gamma_2$ with respect to $C'\cap S$ such that $\langle c_2, x\rangle \leq \gamma_2$ is an $\epsilon'$-approximate rotation of $\langle c, x\rangle \leq \gamma$ with respect to {$x_i=1$}. In other words, there exists $\lambda_2 \geq 0$ such that $c_2 = c + \lambda_2 (- e^i)$ and $\gamma_2 \leq \gamma + \epsilon' - \lambda_2$.

Consider the set $C''$ obtained by intersecting $C'$ with the two inequalities $\langle c_1, x \rangle \leq \gamma_1$ and $\langle c_2, x \rangle \leq \gamma_2$. Now observe that if we consider the face {$F_1$ of $C''$ defined by $x_i = 0$}, the inequality $\langle c_1, x \rangle \leq \gamma_1$ reduces to $\langle c, x \rangle \leq \gamma+\epsilon'$, { i.e., $F_1 \cap \{x : \langle c_1, x \rangle \leq \gamma_1\} = F_1 \cap \{x: \langle c, x \rangle \leq \gamma+\epsilon'\}$. Similarly, on the face defined by $x_i = 1$, the inequality $\langle c_2, x \rangle \leq \gamma_2$ also reduces to $\langle c, x \rangle \leq \gamma + \epsilon'$. Thus, $\langle c, x \rangle\leq \gamma + \epsilon'$ is valid for both these faces of $C''$.} Thus, we can derive this inequality as a cutting plane for $C''$ using the disjunction $\{x: x_i \leq 0\} \cup \{x: x_i \geq 1\}$. Thus, {by concatenating the CP proofs of $\langle c_1, x \rangle \leq \gamma_1$ and $\langle c_2, x \rangle \leq \gamma_2$ and then deriving $\langle c, x \rangle \leq \gamma + \epsilon'$ using the disjunction $\{x: x_i \leq 0\} \cup \{x: x_i \geq 1\}$, we have replaced the entire tree below $C'$ with a CP proof such that $\langle c, x \rangle \leq \gamma + \epsilon'$ is valid for the leaf. Moreover, the length of this CP proof is exactly one less than the number of nodes below $C'$ in the original branch-and-cut tree (since we do not have the two branching nodes, but have an extra cutting plane derivation). Thus, this replacement gives us a new BC proof of $\langle c, x \rangle \leq \gamma + \epsilon'$ with one less branching node.} We now appeal to the induction hypothesis with $\epsilon = \epsilon'$ for the new, modified BC proof of $\langle c, x \rangle \leq \gamma + \epsilon'$. Thus, we obtain a CP proof of size at most the new BC proof (whose size is at most $N$, the size of the original BC proof) for the inequality $\langle c, x \rangle \leq (\gamma + \epsilon') + \epsilon'$. By choice, $2\epsilon' = \epsilon$, we have thus produced a CP proof of $\langle c, x \rangle \leq \gamma + \epsilon$ with size at most $N$.

The case when $C$ is a polytope is handled by appealing to Corollary~\ref{cor:rotation-poly} in the above proof.
\end{proof}

\begin{proof}[Proof of Theorem~\ref{thm:CP<<BB}] We will use the fact that, for a clique with $t\ge2$ vertices, the LP value of the independent set problem is $t/2$ (and 1 if $t=1$).

We first prove a lower bound on the size of any BB proof that establishes the upper bound of $m$ on the objective value.

\begin{claim}
For $m\geq 1$, any branch-and-bound proof of $\sum_{v} x_v \leq m$ has size at least $2^{m+1}-2$.
\end{claim}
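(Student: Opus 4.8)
The plan is to analyze the structure of a branch-and-bound proof tree for the graph $G$ consisting of $m$ disjoint triangles. Since the only disjunctions available are variable disjunctions, every node of the tree corresponds to fixing some subset of the coordinates to $0$ or $1$ (possibly with further LP-feasible restrictions), and the LP relaxation at each node is a face of $P(G)$ obtained by such fixings. The key quantitative fact I would use is the one stated at the start of the proof: on a clique with $t \ge 2$ vertices the LP optimum of the independent set problem is $t/2$, and it is $1$ when $t=1$ and $0$ when $t=0$. Hence if a node has, among the $m$ triangles, $a$ triangles with no vertex fixed, $b$ triangles with exactly one vertex fixed to $0$ (and none to $1$), and so on, the LP value there is easy to write down in closed form; crucially, as long as at least one triangle still has two ``free'' vertices, the LP value strictly exceeds $m$, so the node cannot be pruned by the bound $LB = m$ and must be branched further (a leaf of a valid BB proof of $\sum_v x_v \le m$ must have LP value $\le m$).

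The main step is to set up the right potential/counting argument. I would argue that a node can be a leaf only if in every one of the $m$ triangles the set of currently free (unfixed, and not forced by LP feasibility) vertices has size at most $1$ — equivalently, in each triangle at least two of the three variables are fixed, OR one variable is fixed to $1$ (which forces the other two to $0$). In either case each triangle's contribution to the LP value drops to at most $1$, giving total $\le m$. Conversely, any internal node where some triangle still has $2$ or $3$ free vertices has LP value $> m$ and must branch on a variable; I'd want to show that branching on a variable in a triangle that already has $\le 1$ free vertex is useless (it doesn't decrease the LP value and doesn't move us toward a leaf), so WLOG every branching node branches on a variable of a triangle with $\ge 2$ free vertices. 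Then I would track, for a branch, the vector recording for each triangle whether it is ``resolved'' (contributes $\le 1$) or ``unresolved'', and show that to resolve a single triangle one needs a subtree of size at least $2$ in a worst-case-forced sense: branching $x_i \le 0$ on a triangle $\{i,j,k\}$ leaves $\{j,k\}$ free (still unresolved, LP contribution $1$ on that pair but the triangle overall still $1$... ) — here I must be careful, since after one ``$\le 0$'' split a triangle has two free vertices forming an edge, whose LP value is $1$, so the triangle is already resolved. The precise combinatorics is that the adversary (the prover trying to make the tree small) is forced, on the ``$x_i \ge 1$'' side, to immediately resolve that triangle, but on the ``$x_i \le 0$'' side the triangle becomes an edge which is resolved as well; so a single triangle seems to cost only a bounded subtree. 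Thus the exponential blow-up must come from the interaction across triangles: the LP value is a sum, and to get the total down to $m$ one must resolve all $m$ triangles, and because pruning requires the total (not each summand) to be $\le m$, partial progress on some triangles does not allow pruning until essentially all triangles are handled along that root-to-leaf path.

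Concretely, I would prove by induction on $m$ the statement: any BB proof (with variable disjunctions) of $\sum_v x_v \le m$ on $m$ disjoint triangles has at least $2^{m+1} - 2$ nodes (excluding the root). For the inductive step, take the root; it branches on some variable $x_i$ in triangle $T_1 = \{i,j,k\}$ (as argued, it cannot be a leaf and WLOG branches inside a triangle with all $3$ vertices free). On the child $x_i \ge 1$, the triangle $T_1$ is fully resolved and its vertices contribute a fixed amount; the residual problem on the remaining $m-1$ triangles must prove $\sum_{v \in V \setminus T_1} x_v \le m - (\text{contribution of } T_1) $. The subtlety is bookkeeping the right-hand side: after setting $x_i=1, x_j=x_k=0$, triangle $T_1$ contributes $1$, so the subtree rooted here is a BB proof of $\sum_{\text{rest}} x_v \le m-1$ on $m-1$ triangles, hence has $\ge 2^{m}-2$ nodes by induction. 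On the child $x_i \le 0$, triangle $T_1$ becomes an edge $\{j,k\}$ with LP value $1$; again $T_1$ is resolved with contribution exactly $1$ (the LP will set one of $x_j,x_k$ to $1$), so this subtree is a BB proof of $\sum_{\text{rest}} x_v \le m-1$ on $m-1$ triangles and also has $\ge 2^{m}-2$ nodes. Adding the two children themselves, the total is $\ge 2 + 2(2^{m}-2) = 2^{m+1}-2$. The base case $m=1$: proving $\sum_v x_v \le 1$ on a single triangle has LP value $3/2 > 1$ at the root, so the root must branch; each child has LP value $1$ (one child is an edge, the other has a vertex fixed to $1$), hence is a leaf, giving $2 = 2^{2}-2$ nodes.

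The step I expect to be the main obstacle is making rigorous the claim that in a valid BB proof we may assume every branching is ``useful'', i.e. branches on a not-yet-resolved triangle — and more importantly, justifying that the right-hand side truly decrements by exactly $1$ per resolved triangle regardless of what order and on which side the prover branches, so that the induction on the residual $m-1$ triangles goes through cleanly. In particular I need to handle the case where the prover branches on a triangle that has already been partially fixed, or branches in a way that creates an infeasible (empty) node on one side — an empty node is a leaf for free, which could in principle shortcut the tree. I would rule this out by noting that a single variable fixing $x_i \le 0$ or $x_i \ge 1$ inside $P(G)$ restricted to a face never creates infeasibility unless the face was already forcing $x_i$ the other way, in which case that branch was not ``shaving off'' the LP optimum and is disallowed for a BB *algorithm* — but since the Claim is about BB *proofs*, which have no such restriction, I must instead argue that an empty child still forces the *other* child to carry a residual proof of $\sum_{\text{rest}} x_v \le m-1$ on $m-1$ triangles whose size is $\ge 2^m - 2$, and $1 + (2^m-2) \ge 2^{m+1}-2$ fails for $m \ge 2$ — so this is actually the delicate point and I would need a sharper argument: an empty child can only arise when some variable is fixed both to $0$ and to $1$ along the path, which means the *path* already had $\ge 2$ fixings in that triangle, i.e. that triangle was already resolved before this branching, contradicting usefulness of the branch. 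Pinning down this ``usefulness normalization'' precisely is the crux of the proof.
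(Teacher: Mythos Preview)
Your inductive approach differs from the paper's, and the inductive step has a real gap. The paper does \emph{not} induct on $m$; it argues directly that every \emph{feasible} leaf in the BB tree lies at depth at least $m$, by observing that a feasible node at depth $k$ (with $k_1$ variables fixed to $1$ and $k_2=k-k_1$ fixed to $0$) has LP value at least $k_1+\frac{3(m-k_1)-k_2}{2}=\frac{3m-k}{2}>m$ whenever $k<m$. A full binary tree whose leaves are all at depth $\ge m$ has at least $2^{m+1}-1$ nodes. Infeasible nodes are handled separately: if $N_1$ is infeasible, its sibling $N_2$ represents the \emph{same} polytope as the parent $N_0$ (the extra fixing is redundant), so one contracts $N_0$ and $N_2$ and deletes $N_1$, producing a strictly smaller tree; hence a minimum-size proof has only feasible nodes.

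Your induction breaks at the step where you assert that the subtree under the $x_i\le 0$ child ``is a BB proof of $\sum_{\text{rest}} x_v\le m-1$ on $m-1$ triangles.'' This relies on the premise that after fixing $x_i=0$ the triangle $T_1$ contributes \emph{exactly} $1$ forever after. That is false: the prover may, deeper in this subtree, branch on $x_j$ and then on $x_k$, reaching a node with $x_i=x_j=x_k=0$ where $T_1$ contributes $0$; at such a leaf the residual requirement is only $\sum_{\text{rest}}x_v\le m$, not $m-1$, so the subtree does not project to an $(m-1)$-triangle proof. Your ``usefulness normalization'' does not rule this out: unlike the $x_i\ge 1$ side (where branching on $x_j$ or $x_k$ gives one empty child and one child identical to the parent, and can be contracted), on the $x_i\le 0$ side branching on $x_j$ yields two \emph{distinct} nonempty children with the same LP value, and neither child's subtree need be a valid proof for the parent's polytope. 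The concern you flag at the end---infeasible children---is a red herring; the actual obstruction is the drop of $T_1$'s contribution to $0$.

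The gap is fixable (e.g., one can apply the paper's depth bound directly to each child's subtree, noting that its root already sits at depth $1$), but once you invoke a depth argument the induction on $m$ becomes superfluous.
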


\begin{proof} Consider first any feasible node of the branch-and-bound tree at depth $k < m$. Consider the path from the root node to this node at depth $k$ and suppose that of $k$ nodes on this path that were branched on, $k_1$ were set to $1$ and $k_2$ were set to $0$ (so $k_1 + k_2 = k$). Feasibility implies that all the $k_1$ vertices set to $1$ are from different cliques. From the remaining $m - k_1$ cliques, $k_2$ vertices were set to $0$, so the LP value at this node is at least $k_1 + \frac{3(m-k_1) - k_2}{2} = \frac{3m - (k_1 + k_2)}{2} = \frac{3m - k}{2}$. Since we assumed $k < m$, the LP has value strictly bigger than $m$. Thus, if the branch-and-bound tree has only feasible nodes, only nodes at depth $m$ or more can have LP values at most $m$. Since every branching node has at least two children, this means we have at least $2^{m+1} - 1$ nodes in the tree, thus the size of the proof is at least $2^{m+1}-2$.

Now, assume that some node $N_1$ of the BB tree is infeasible, which means at the node $N_1$, we have set two variables, which are denoted as $x_1$ and $x_2$, of the same clique to $1$. Let $N_0$ be the parent node of $N_1$, and let $N_2$ be the other child of $N_0$. In $N_2$, we have $x_1=1$ and $x_2=0$ (or vice versa). However, $x_2=0$ is a redundant constraint once $x_1=1$ is imposed since they belong to the same clique. Thus, $N_0$ and $N_2$ have the same LP objective value. So we can eliminate the node $N_1$, and contract $N_0$ and $N_2$ to obtain an equivalent BB tree with fewer nodes. By doing the same thing to every infeasible node, we can get a new BB tree with fewer nodes than the original BB tree. Thus, the smallest size BB proofs are those with only feasible nodes. And we proved a lower bound of $2^{m+1}-2$ on the size of such BB proofs above.\end{proof}

Next we show that there is an efficient cutting plane algorithm that solves the problem.
\begin{claim}\label{claim:CP-fast}
There is a cutting plane algorithm based on variable disjunctions that solves the problem in $m$ iterations.
\end{claim}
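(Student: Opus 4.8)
The plan is to run the cutting-plane algorithm of Definition~\ref{def:CP-algo} so that the $i$-th cut is the clique inequality $\sum_{v\in t_i}x_v\le 1$ for the $i$-th triangle $t_i$ of $G$, and to argue that after $m$ such cuts the current relaxation is exactly the integer hull, so the LP solver returns an integer optimum of value $m$.

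First I would check that each clique inequality is derivable from a single variable disjunction. Fixing a triangle $t=\{a,b,c\}$ and the disjunction $D_{a,0}=\{x_a\le 0\}\cup\{x_a\ge 1\}$: on $P(G)\cap\{x_a\le 0\}$ one has $x_a=0$, so the edge $(b,c)$ gives $x_a+x_b+x_c=x_b+x_c\le 1$; on $P(G)\cap\{x_a\ge 1\}$ one has $x_a=1$, so the edges $(a,b),(a,c)$ force $x_b=x_c=0$ and again $x_a+x_b+x_c\le 1$. Hence $x_a+x_b+x_c\le 1$ is valid on $P(G)\cap D_{a,0}$, and therefore a legitimate cutting plane derived from $D_{a,0}$ for every relaxation $C_i\subseteq P(G)$.

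Next I would exploit the product structure. Since the triangles are vertex-disjoint, $P(G)=\prod_{t}P(K_3)$ with one factor per triangle, the objective $\sum_v x_v$ is separable across the factors, and this remains true after truncating some factors by their clique inequality. Optimizing a separable linear function over a product forces every factor of an optimal point to be optimal in that factor; over $P(K_3)$ the maximum of the coordinate-sum is $3/2$ (add the three edge inequalities). Consequently, if $C_i$ is $P(G)$ with clique cuts on triangles $t_1,\dots,t_i$ only, then every LP-optimal $x^i$ has coordinate-sum $3/2$ on each remaining triangle; in particular $x^i\notin\Z^{3m}$, the algorithm does not stop, and the clique inequality of $t_{i+1}$ (right-hand side $1<3/2$) separates $x^i$. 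So at iteration $i$ the algorithm may legitimately pick $D_{a_{i+1},0}$ with $a_{i+1}$ a fixed vertex of $t_{i+1}$ and the cut $\sum_{v\in t_{i+1}}x_v\le 1$, producing $C_{i+1}$. A one-line polyhedral computation shows that a single clique-truncated factor equals the standard simplex $\{x\ge 0:\sum_{v\in t}x_v\le1\}$, so $C_m$ is a product of $m$ standard simplices, hence an integral polytope, hence $\conv(P(G)\cap\Z^{3m})$, on which $\sum_v x_v$ attains maximum $m$.

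The only delicate point — and the step I expect to need the most care — is the final LP solve: $C_m$ is integral but still has fractional optimal points (all coordinates $1/3$), so one must argue that the solver returns an integral optimum. I would handle this by invoking that the convex solver returns an extreme-point optimizer when the optimum is attained (as the simplex method does), and extreme points of an integral polytope are lattice points; then $x^m\in S$ and the algorithm halts with optimal value $m$ after generating exactly $m$ cuts. Putting the three pieces together gives the claimed cutting-plane algorithm.
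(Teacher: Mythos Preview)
Your proposal is correct and follows essentially the same approach as the paper: derive the clique inequality $x_a+x_b+x_c\le 1$ for each triangle from the variable disjunction $D_{a,0}$, observe that it separates the current LP optimum (which still has coordinate-sum $3/2$ on the uncut triangles), and after $m$ such cuts the relaxation certifies $\sum_v x_v\le m$. Your write-up is in fact more careful than the paper's: the paper does not explicitly address your ``delicate point'' about the final LP solve returning an integer extreme point, so your extra justification there is a genuine (if minor) improvement over the published argument.
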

\begin{proof} $P(K_3) := \{(x_u, x_v, x_w) \in \R^3_+: \;\; x_u + x_v \leq 1,\;\; x_u + x_w \leq 1,\;\; x_v + x_w \leq 1\}$. Applying the disjunction $x_u \leq 0$ and $x_u \geq 1$, we see that $x_u + x_v + x_w \leq 1$ is valid for this disjunction.  Thus, for each copy of $K_3$, we derive this inequality in one iteration. Furthermore, this inequality cuts off the optimal solution of the LP relaxation, as the optimal LP value is $3/2$. Therefore in $m$ iterations, we have the inequality that bounds the sum of all variables by $m$.\end{proof}
The two claims above together complete the proof of Theorem~\ref{thm:CP<<BB}.
\end{proof}

\begin{proof}[Proof of Theorem~\ref{thm:BB<<CP}] We argue the $n=2$ case. The general $n$ dimensional case is similar. Consider applying cutting planes derived from variable disjunctions on the convex hull of $\{(0,0), (1.5,1), (2,2), (1, 1.5)\}$. By symmetry, one can focus on the behavior of any one of the two fractional vertices. It is not hard to show that the best one can do is to ``move" the vertex $(1.5,1) = (1 + \frac{1}{2},1)$ to $(1 + \frac{1}{4},1)$ in two iterations; and after $2K$ iterations, the vertex moves to $(1 + \frac{1}{2+2^K},1)$; see Figure~\ref{fig:BB<<CP}. Thus, to maximize $x_1 - x_2$, any CP algorithm takes infinitely many iterations to reach the optimal value of $0$. However, after just two BB steps based on the disjunctions $D_{1,1}$ and $D_{2,0}$ (recall the notation from~\eqref{eq:var-disj}), the leaves of the BB tree consist of the feasible integer points $(2,2)$ and $(0,0)$, and a triangle over which the LP relaxation has $(1,1)$ as its unique optimal solution.
\end{proof}

{\begin{proof}[Proof of Theorem~\ref{thm:fixed-dim-0/1}] For a BB algorithm (see Definition~\ref{def:BB-algo}), one simply iterates through the disjunctions $\{x: x_i \leq 0\}\cup \{x: x_i \geq 1\}$, $i=1, ..., n$ in order and applies the disjunction that cuts off the optimal solution to $\max\{\langle c, x \rangle: x \in N\}$ at every node $N$ (the nodes can be selected arbitrarily in $\mathcal{L}$). In the worst case, one enumerates all integer points and so the algorithm takes at most $O(2^n)$ iterations.

For a CP proof, one first observes that since $C$ is compact, for every $0/1$ point not in $C$, one can separate it from $C$ by a separating hyperplane. Thus, there exists a polytope $P$ such that $C \subseteq P$ and $P \cap \Z^n = C \cap \Z^n$. Balas' sequential convexification theorem~\cite{balas1998disjunctive} implies that, starting from $P$, if we repeatedly convexify with respect to the disjunctions $\{x: x_i \leq 0\}\cup \{x: x_i \geq 1\}$, $i=1, ..., n$, in order, we obtain the polytope $\conv(P \cap \Z^n) = \conv(C \cap \Z^n)$. Let us label this sequence of polytopes as $$P_0 = P,\;\;\ldots, \;\;P_i = \conv(P_{i-1}\cap\{x: x_i \leq 0\} \cup P_{i-1} \cap \{x: x_i \geq 1\}),\;\; \ldots, \;\;P_n = \conv(C \cap \Z^n).$$ Moreover, since we started with $C \subseteq P$, any sequence of cutting planes derived for $P$ is a valid sequence of cutting planes for $C$. By a result of Andersen, Cornu\'ejols and Li~\cite{DBLP:journals/mp/AndersenCL05}, any split inequality for a polytope in $\R^n$ is also a split inequality for a relaxation of the polytope obtained from at most $n$ linearly independent valid inequalities. By combining this result with Balas' theorem and Carath\'eodory's theorem, it is not hard to get a CP proof that starts from $P$ and proves the validity of $\langle c, x \rangle \leq \gamma$ whose length only depends on the dimension $n$. This same proof can be used starting from $C$.
\end{proof}}

\begin{figure}
\centering
\begin{tikzpicture} [ font = \small, align = flush center, >=stealth, thick,
  %node distance = 4cm,
  %inner xsep = 0pt,
  every node/.style={rounded corners, text width=3.5cm, minimum height=1.3cm}]
  \node [fill=white!20] (initialize)
  {\includegraphics[width=0.7\textwidth]{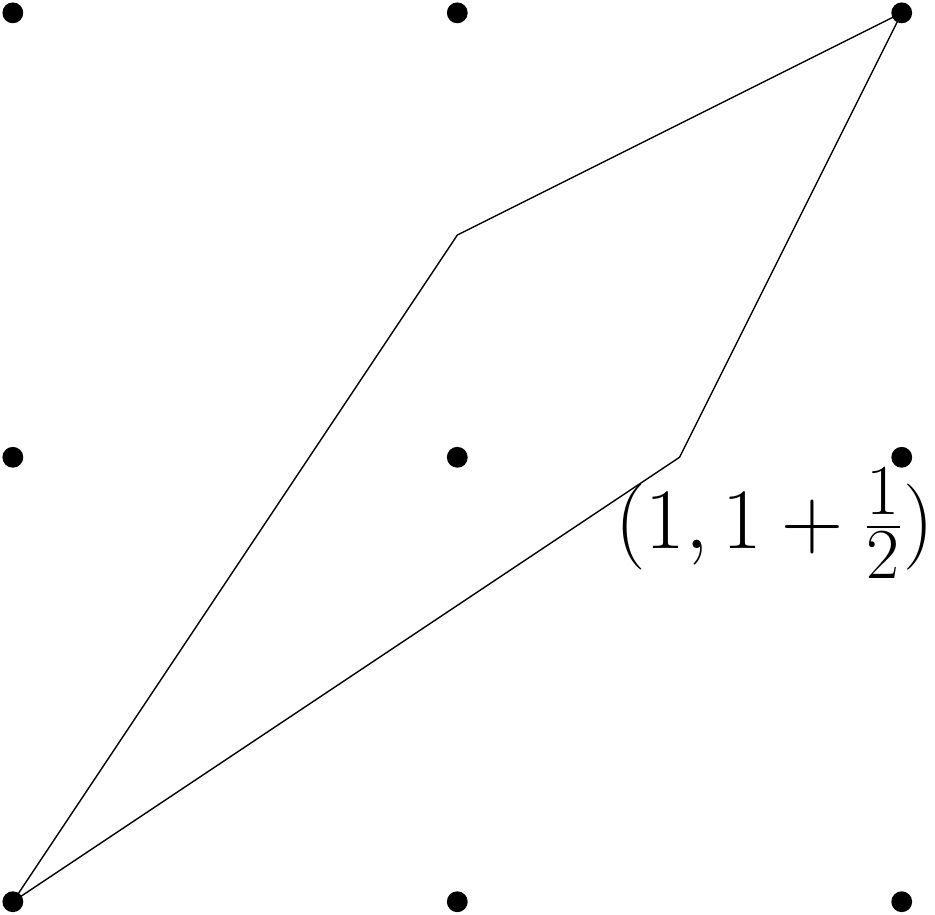}};
  \node [fill=white!20, right = 1.5cm of initialize] (cut1)
  {\includegraphics[width=\textwidth]{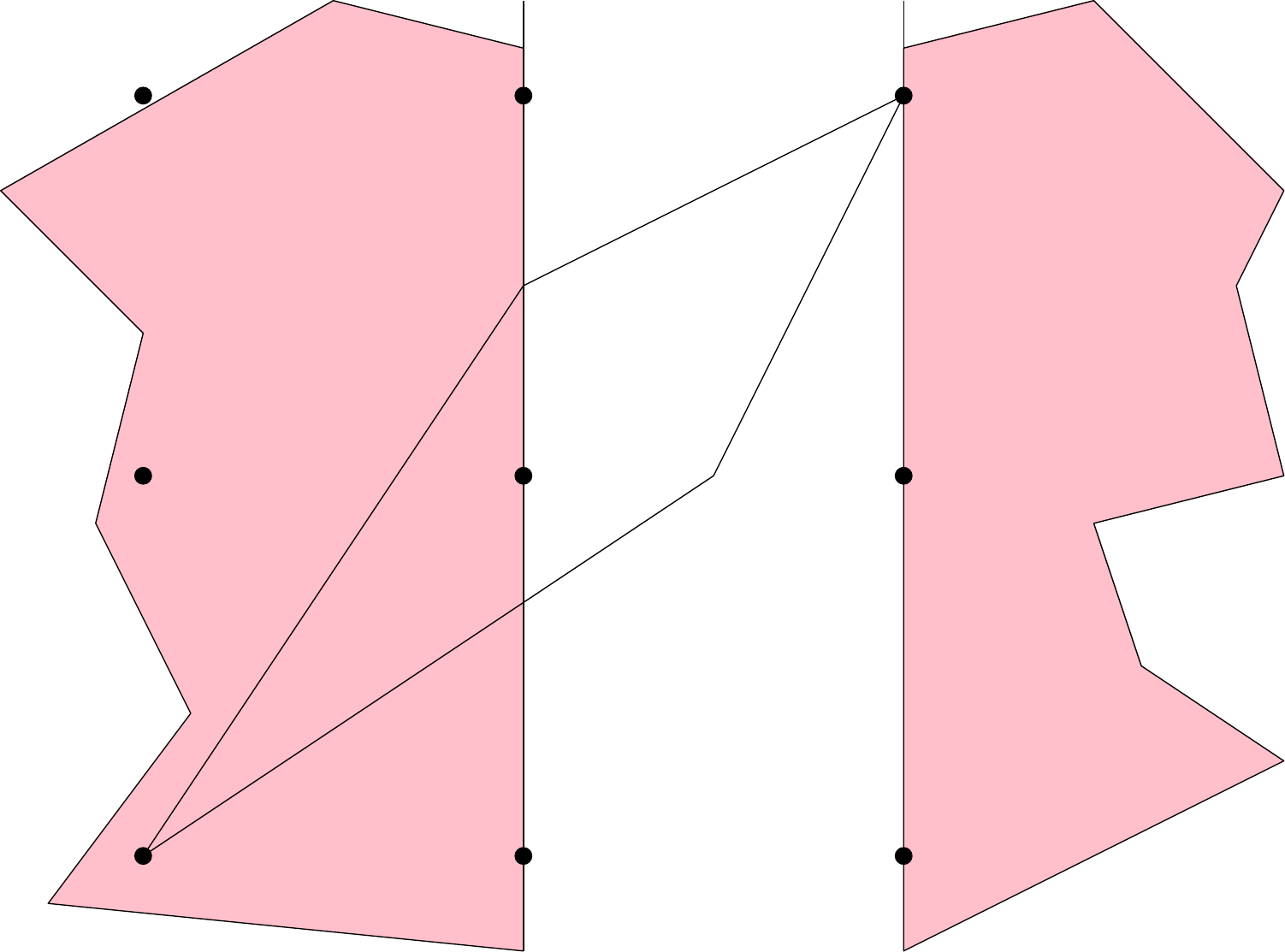}};
  \node [fill=white!20, right = 1.5cm of cut1] (cut2)
  {\includegraphics[width=\textwidth]{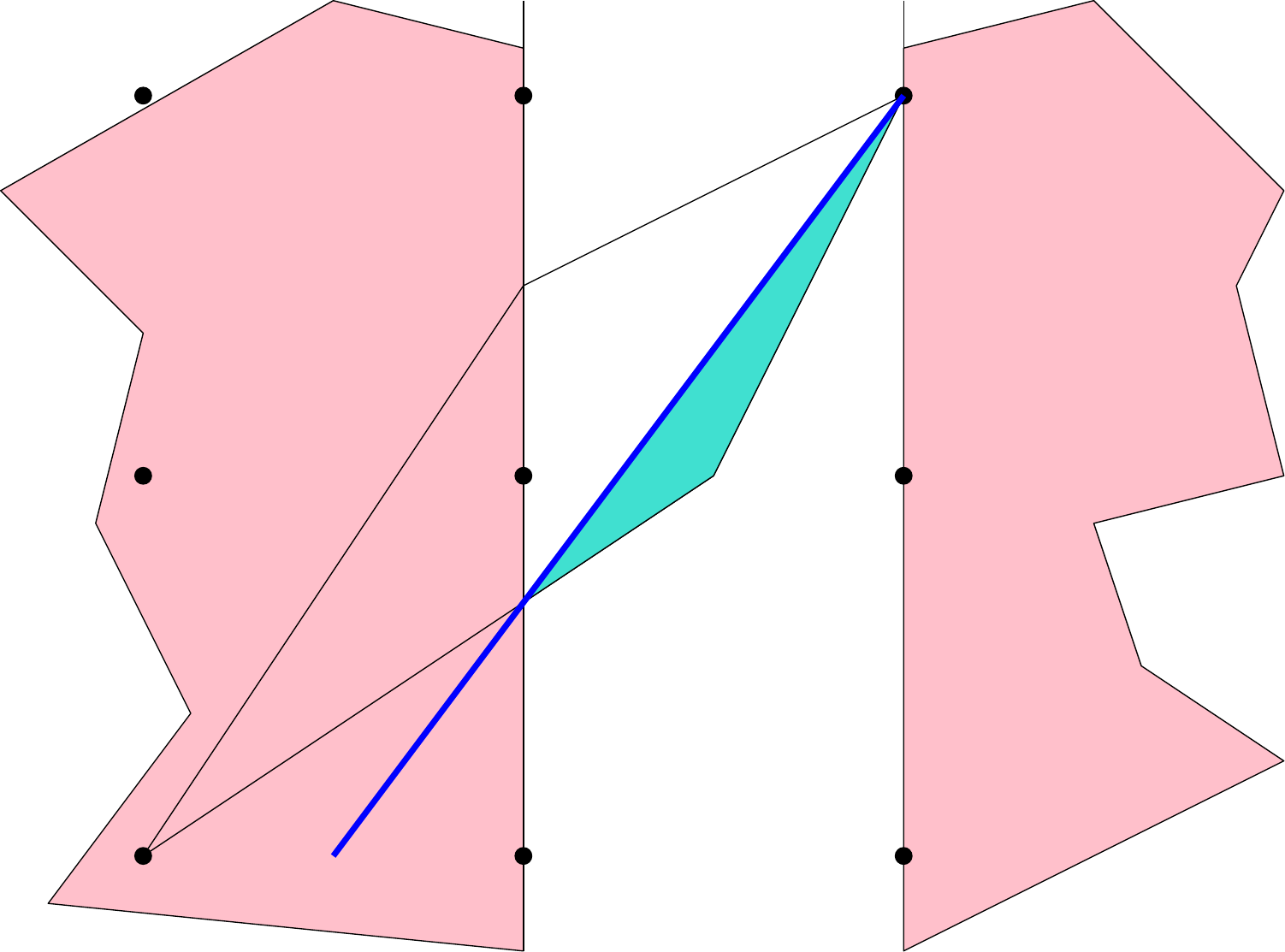}};
 \node [fill=white!20, below = 1.25cm of initialize] (cut3)
  {\includegraphics[width=0.7\textwidth]{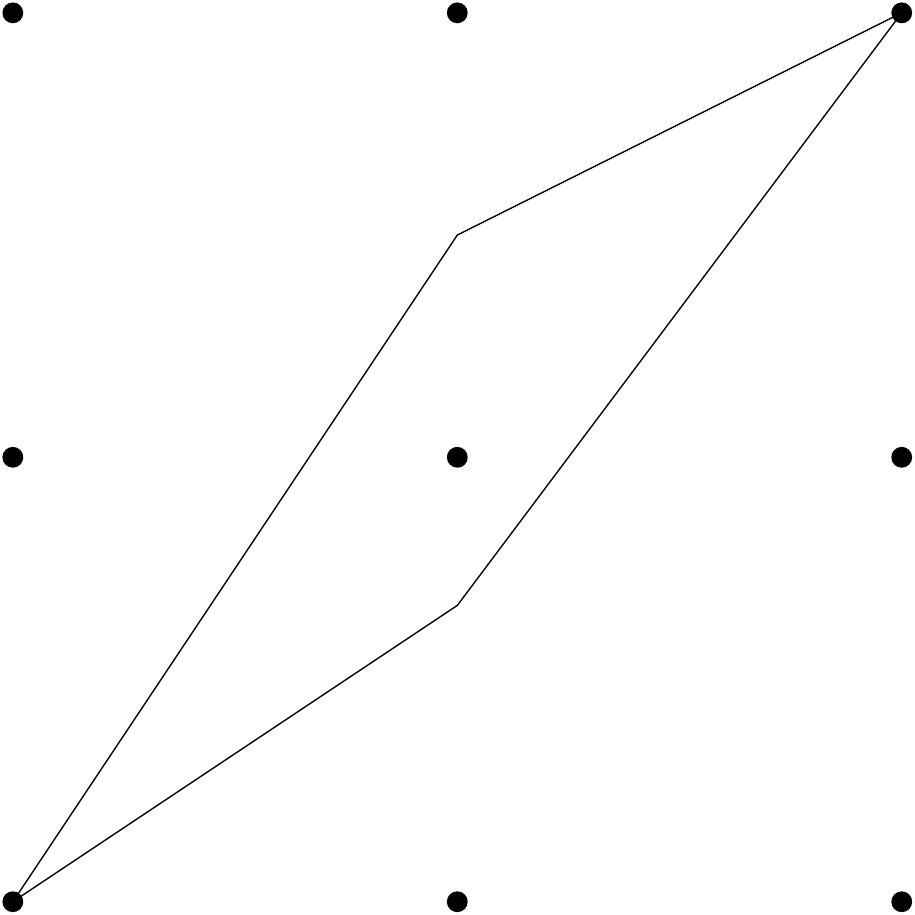}};
  \node [fill=white!20, right = 1.5cm of cut3] (cut4)
  {\includegraphics[width=1.05\textwidth]{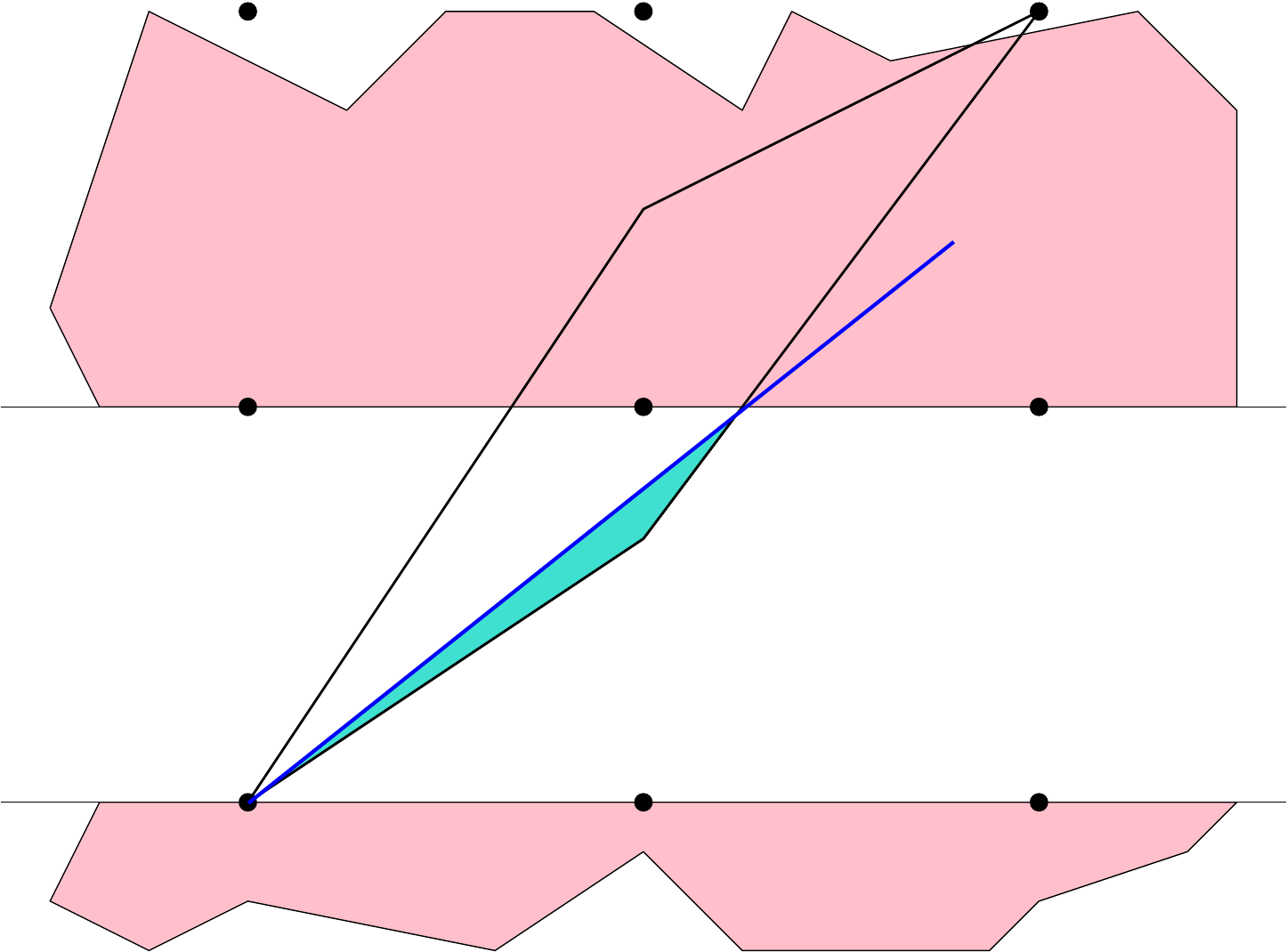}};
  \node [fill=white!20, right = 1.5cm of cut4] (cut5)
  {\includegraphics[width=0.7\textwidth]{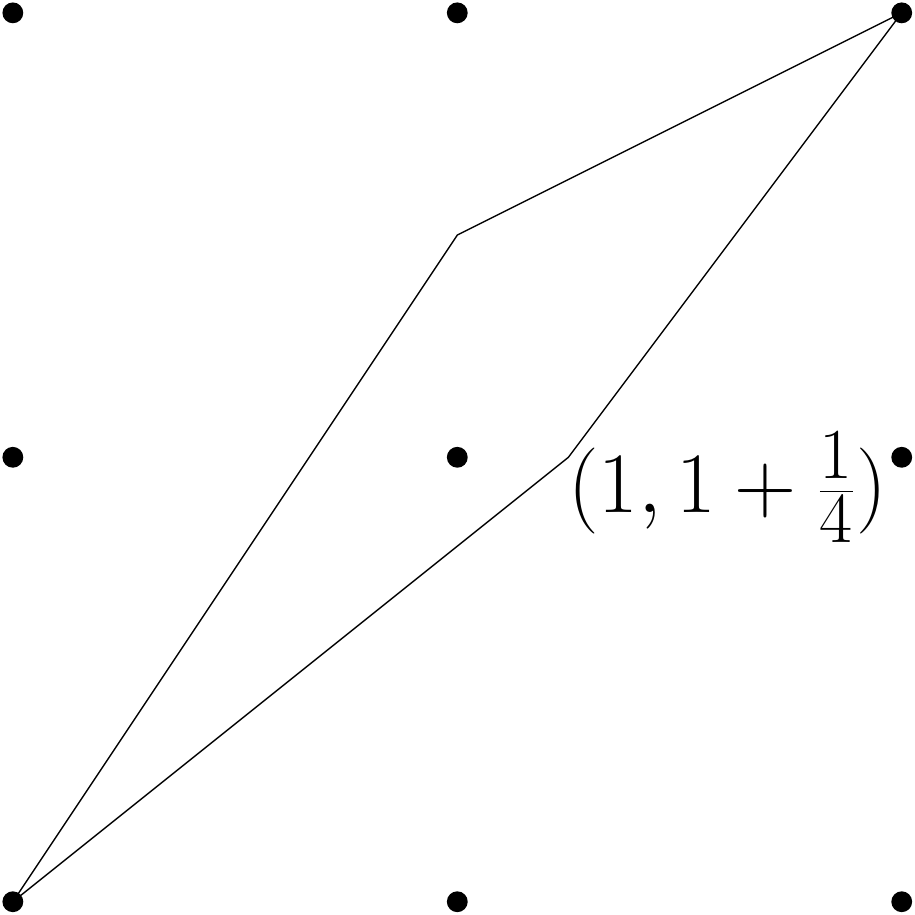}};
  \draw[->] (initialize) -- +(3.1,0);
  \draw[->] (cut1) +(2.2,0) -- (cut2);
  \draw[->,rounded corners=2mm] (cut2) -- +(0, -2.0) -- +(-10.0,-2.0) -- +(-10.0,-2.5);
  \draw[->] (cut3) -- +(3.1,0);
  \draw[->] (cut4) +(2.2,0) -- (cut5);
 \end{tikzpicture}
 \caption{Cutting plane proofs based on variable disjunctions can have infinite length.}\label{fig:BB<<CP}
 \end{figure}

\begin{proof}[Proof of Theorem~\ref{thm:BB<=CP}] {Let us denote by $D_1, \ldots, D_K \in \D$ the sequence of disjunctions (with possible repetitions) used by the CP proof.} Denote the relaxation after applying the cuts based on $D_1, \ldots, D_i$ as $C_i$, i.e., the relaxation after the $i$-th {step} of the CP proof. Define $z^{CP}(i): = \sup\{\langle c, x\rangle : x \in C_i\}$, $i=1, \ldots, K$. Thus, $z^{CP}(K) \leq \gamma$.

We ``simulate" the CP proof with a BB procedure as follows.
\begin{enumerate}
\item Initialize $\cL := \{C\}$.
\item For $i=1,2, \ldots, K$, do
\begin{enumerate}
\item Compute $z^{BB} := \max_{Q \in \cL}\left\{\sup\{\langle c, x\rangle : x \in Q\}\right\}$ (we use the convention that $\sup\{\langle c, x\rangle : x \in Q\} = -\infty$ if $Q =\emptyset$).
\item While $z^{BB} > z^{CP}(i)$ do
\begin{itemize} \item Let $Q$ be a node in $\cL$ that achieves the maximum in the definition of $z^{BB}$ and let $v \in Q$ be any point such that $\langle c, v\rangle > z^{CP}(i)$. Clearly, {$v \not\in C_i$} and must be cut off by some inequality generated by one of the disjunctions $D_1, \ldots, D_i$. Let this disjunction be $D^\star =(Q^\star_1 \cup \ldots \cup Q^\star_k)$. Apply this disjunction on $Q$, i.e., replace $Q$ by $\bigcup_{i=1}^k Q^\star_i\cap Q$ in $\cL$.
\end{itemize}
\end{enumerate}
\end{enumerate}

Observe that if the above algorithm stops in a finite number of steps, then $z^{BB} \leq z^{CP}(K) \leq \gamma$. Thus we have a BB proof of $\langle c, x \rangle \leq \gamma$. Furthermore, the above BB procedure is a BB algorithm in the sense of Definition~\ref{def:BB-algo} if at every iteration $v$ can be chosen to be a point attaining the supremum, i.e., a point such that $\langle c,v\rangle=\sup\{\langle c, x\rangle : x \in Q \}$, where $Q$ is the node selected in step 2(b).

We now show that the BB algorithm generates a BB proof tree of size at most $O((MK)^{n+1})$.

Consider the tree at the end of the BB algorithm. The number of feasible {\em leaves} is at most the number of nonempty cells in a hyperplane arrangement with $MK$ hyperplanes since each $D_j$ has complexity at most $M$. By standard combinatorics of hyperplane arrangements~\cite{zaslavsky1977combinatorial,buck1943partition}, there can be at most $O((M K)^n)$ cells which then bounds the number of feasible leaves generated. Since every node is at distance at most $K$ from the root (because no disjunction is applied more than once along the same path from the root to a leaf), there are at most $O(K(MK)^n)$ feasible {\em nodes} in the entire tree. Note that only feasible nodes in the tree can have children because if a node in the tree is infeasible, no disjunction is ever applied to it in any BB algorithm, and certainly not in the above one. Since each feasible node of the tree has at most $M$ children because the complexity of each disjunction is at most $M$, the total number of nodes in the tree is bounded by $O((MK)^{n+1})$.\end{proof}

\begin{proof}[Proof of Theorem~\ref{thm:BB-algorithm-fixed-dim}] Since $\langle c, x \rangle \leq \gamma$ is valid for $C \cap \Z^n$, the set $C \cap \{x\in \R^n: \langle c, x \rangle \geq \lceil\gamma\rceil\}$ has no integer points. Consider a BB proof that first branches on $\langle c, x \rangle \leq \lfloor\gamma\rfloor$ and $\langle c, x \rangle \geq \lceil\gamma\rceil$.  By the flatness theorem~\cite{BanaszczykLitvakPajorSzarek99,banaszczyk1996inequalities,rudelson2000distances}, there is a BB proof based on split disjunctions that proves infeasibility of the latter branch with size at most $f(n)^n$. Using arguments similar to those used in the proof of Theorem~\ref{thm:BB<=CP}, we can convert this BB proof into a BB algorithm that takes at most $O((2f(n))^{n(n+1)})$ iterations.
\end{proof}

\begin{proof}[Proof of Theorem~\ref{thm:BB-con-CP-exp}] For any $h'>1$, let $v^1(h')=(1-\frac{1}{h},1-\frac{1}{h},h')$ and $v^2(h')=(1-\frac{1}{h},1-\frac{1}{h},h'\cdot\frac{h-1}{h+1})$. Consider any polyhedron $Q$ that contains $\conv\{(0,0,0), (0,2,0), (2,0,0), v^1(h')\}$.

We first claim that every split cut for $Q$ based on variable disjunctions on the variables $x_1$ and $x_2$ is valid for $v^2(h')$. For any such disjunction other than $D_{1,0}$ and $D_{2,0}$, $v^2(h')$ is contained in one of the halfspaces of the disjunction, so the claim is true. Consider $D_{1,0}$. Let $v^3(h'):=(1,\frac{h-1}{h+1}, h'\cdot \frac{h}{h+1})$. Clearly $v^3(h')\in D_{1,0}$. Since $v^3(h')$ lies on the segment between $v^1(h')$ and $(2,0,0)$, we have $v^3(h')\in Q$. Thus $v^3(h')\in Q\cap D_{1,0}$. Since $v^2(h')$ is in the segment between $v^3(h')$ and $(0,\frac{2(h-1)}{h+1},0) \in Q \cap D_{1,0}$, we have that $v^2(h')$ is in the convex hull of $Q\cap D_{1,0}$. With a symmetric argument, it can be proved that  a cutting plane derived from $D_{2,0}$ cannot cut off $v^2(h')$ from $Q$. Thus, the statement follows.

It is also not hard to see that any split cut for $Q$ based on a disjunction on $x_3$ is valid for $(1-\frac{1}{h},1-\frac{1}{h},h'-1)$.

We conclude that the closure of all split cuts based on variable disjunctions for $Q$ must contain the point $(1-\frac{1}{h},1-\frac{1}{h},\min\{h'\cdot\frac{h-1}{h+1}, h'-1\}) = (1-\frac{1}{h},1-\frac{1}{h},\min\{h' - h'\cdot\frac{2}{h+1}, h'-1\}) = (1-\frac{1}{h},1-\frac{1}{h},h' - \max\{h'\cdot\frac{2}{h+1}, 1\})$.
\medskip

Let $k$ be the length of any CP proof based on variable disjunctions with respect to $P(h)$ to certify the validity of the inequality $x_3\leq (h+1)/2$. When $h'\geq \frac{h+1}{2}$,  we have $h'\cdot(\frac{2}{h+1})\geq 1$. So from the above arguments, $k$ must satisfy

\begin{equation*}
\begin{array}{rccl}
    & h\cdot \left(1-\frac{2}{h+1}\right)^k  &\leq &\frac{h+1}{2}\\
    \Rightarrow & 1-k\frac{2}{h+1} &\leq &\frac{1}{2} + \frac{1}{2h} \\
     \Rightarrow &\frac{h+1}{4} - \frac{h+1}{4h} & \leq & k
    \end{array}
\end{equation*}
where the second inequality follows from the fact that $1-nx \leq (1-x)^n$, assuming $0 < x < 1$, i.e., assuming $h > 1$.
Thus, for $h > 1$, to certify $x_3\leq 0$ from $x_3\leq h$, we need at least $\Omega(h)$ steps.

However, in just two BB steps based on the disjunctions $D_{1,0}$ and $D_{2,0}$, we can obtain the optimal value.
\end{proof}

\subsection{Some additional remarks}\label{sec:misc}

Theorem~\ref{thm:CP<=BB} that shows cutting planes do just as well as branch-and-bound for 0/1 problems can be generalized a little to the following scenario.

\begin{theorem}\label{thm:CP<=BB-II} Let $C\subseteq \R^n$ be a compact, convex set. Let $\D$ be a family of valid disjunctions for some non-convexity $S\subseteq \R^n$, such that each disjunction in $\D$ is {\em facial} for $C$, i.e., $C \cap D$ is a union of exposed faces of $C$.

Let $\langle c, x \rangle \leq \gamma$ be a valid inequality for $C \cap S$ (possibly $c=0$, $\gamma = -1$ if $C \cap S = \emptyset$). If there exists a branch-and-cut proof of size $N$ based on $\D$ that certifies $\langle c, x \rangle \leq \gamma$, then for any $\epsilon > 0$, there exists a cutting plane proof based on $\D$ of size at most $N$ certifying $\langle c, x \rangle \leq \gamma + \epsilon$.

If $C$ is a polytope, then the statement is also true with $\epsilon = 0$.
\end{theorem}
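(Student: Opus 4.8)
The plan is to mimic the proof of Theorem~\ref{thm:CP<=BB} essentially verbatim, replacing the specific variable disjunctions $\{x_i \le 0\}\cup\{x_i\ge 1\}$ by an arbitrary facial disjunction $D = Q_1\cup\dots\cup Q_k \in \D$. The single structural fact about variable disjunctions that was actually used there was that each set $C'\cap\{x_i\le 0\}$ and $C'\cap\{x_i\ge 1\}$ appearing at a branching node is a \emph{face} of $C'$ — indeed an exposed face, cut out by an inequality ($x_i\le 0$ or $-x_i\le -1$) valid for $C'$. The hypothesis that $\D$ is facial for $C$ is designed precisely to recover this: if $C\cap D$ is a union of exposed faces of $C$, then for each $j$ the set $C\cap Q_j$ is an exposed face of $C$, hence of the form $C\cap\{\langle a_j,x\rangle = b_j\}$ for some inequality $\langle a_j,x\rangle\le b_j$ valid for $C$. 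One subtlety to check: at a branching node we are working not with $C$ itself but with a subset $C'\subseteq C$ obtained by intersecting $C$ with previously derived halfspaces, so I need that $C'\cap Q_j$ is still an exposed face of $C'$. This follows because intersecting with a valid halfspace preserves the property "exposed face": if $C\cap Q_j = C\cap\{\langle a_j,x\rangle=b_j\}$ with $\langle a_j,x\rangle\le b_j$ valid for $C$, then $\langle a_j,x\rangle\le b_j$ is also valid for $C'$ and $C'\cap Q_j = C'\cap\{\langle a_j,x\rangle=b_j\}$ is the corresponding exposed face of $C'$. (Here I use that each $Q_j$ is convex and $C'\cap Q_j \subseteq C\cap Q_j$.)

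Granting this, the induction goes through as before: induct on the number $k$ of branching nodes in the BC proof; if $k=0$ it is already a CP proof. Otherwise pick a maximal-depth branching node representing $C'$, using disjunction $D=Q_1\cup\dots\cup Q_k$, with children $C'\cap Q_j = C'\cap\{\langle a_j,x\rangle = b_j\}$, each an exposed face of $C'$ defined by an inequality $\langle a_j,x\rangle\le b_j$ valid for $C'$. Below each child the BC proof is a pure CP proof of $\langle c,x\rangle\le\gamma$ with respect to $(C'\cap Q_j)\cap S$. Apply Theorem~\ref{thm:CP-proof-lifting} with $F = C'\cap Q_j$, $C = C'$, and $\bar\epsilon = \epsilon/2$, for each $j=1,\dots,k$: this lifts each branch's CP proof to a CP proof, with respect to $C'\cap S$, of an inequality $\langle c_j,x\rangle\le\gamma_j$ that is an $(\epsilon/2)$-approximate rotation of $\langle c,x\rangle\le\gamma$ with respect to $\langle a_j,x\rangle = b_j$, i.e.\ $c_j = c+\lambda_j a_j$, $\gamma_j\le\gamma+\epsilon/2+\lambda_j b_j$ with $\lambda_j\ge 0$.

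Now concatenate these $k$ lifted CP proofs (all valid with respect to $C'\cap S$) to obtain $C'' := C'\cap\bigcap_{j=1}^k\{\langle c_j,x\rangle\le\gamma_j\}$. By Remark~\ref{rem:rotation}, on the face $C''\cap\{\langle a_j,x\rangle=b_j\}$ the inequality $\langle c_j,x\rangle\le\gamma_j$ implies $\langle c,x\rangle\le\gamma+\epsilon/2$; hence $\langle c,x\rangle\le\gamma+\epsilon/2$ is valid on each $C''\cap Q_j$, so it is valid on $C''\cap D \supseteq C''\cap S$, i.e.\ it is a legitimate cutting plane for $C''$ derived from the disjunction $D\in\D$. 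Appending this one cut yields a BC proof of $\langle c,x\rangle\le\gamma+\epsilon/2$ with one fewer branching node and size no larger than $N$ (we removed $k$ branching children and their subtrees' structure but added $k$ lifted proofs of the same total length plus one cut — the node-count bookkeeping is identical to the proof of Theorem~\ref{thm:CP<=BB}). Apply the induction hypothesis to this modified BC proof with error parameter $\epsilon/2$ to get a CP proof of $\langle c,x\rangle\le\gamma+\epsilon$ of size at most $N$. For the polyhedral case, replace every appeal to Lemma~\ref{lem:approx-lifting}/Theorem~\ref{thm:CP-proof-lifting} by Corollary~\ref{cor:approx-lifting}/Corollary~\ref{cor:rotation-poly} to run the argument with $\epsilon = 0$ throughout.

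The main obstacle — and really the only place the hypothesis does work — is the bookkeeping around "exposed face" at interior branching nodes: ensuring that after intersecting with earlier cuts and disjunction inequalities, each piece $C'\cap Q_j$ is still an exposed face of $C'$ cut out by an inequality valid for $C'$, so that Theorem~\ref{thm:CP-proof-lifting} is applicable. Once that is pinned down, the rest is a line-by-line transcription of the proof of Theorem~\ref{thm:CP<=BB}, and in fact Theorem~\ref{thm:CP<=BB} becomes the special case $S=\Z^n$, $C\subseteq[0,1]^n$, $\D$ the variable disjunctions (for which every $C\cap\{x_i\le 0\}$, $C\cap\{x_i\ge 1\}$ is an exposed face of $C$).
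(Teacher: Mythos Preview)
Your proposal is correct and follows exactly the approach the paper takes: the paper's own proof of Theorem~\ref{thm:CP<=BB-II} consists of a single sentence stating that one adapts the proof of Theorem~\ref{thm:CP<=BB} by allowing finitely many exposed faces per disjunction instead of two, and you have spelled out precisely that adaptation. Your verification that $C'\cap Q_j$ remains an exposed face of $C'$ for any $C'\subseteq C$ obtained by intersecting with halfspaces, and your bookkeeping on the node count (saving $k-1\ge 0$ nodes instead of exactly one), are the only new details, and both are handled correctly.
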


\begin{proof} The above result can be established by simply observing that in the proof of Theorem~\ref{thm:CP<=BB}, instead of two faces arising out of a disjunction, one could have possibly more than two, but finitely many, exposed faces coming from a disjunction. The entire proof adapts easily to this minor change.\end{proof}

\begin{remark} The proofs of Theorems~\ref{thm:CP<=BB} and~\ref{thm:CP<=BB-II} imply the following. Let $C$ be a compact, convex set and let $F$ be an exposed face of $C$ defined by $\langle a, x \rangle = b$. Let $D$ be any valid disjunction (not necessarily facial) for some non-convexity $S$. Let $\langle c, x \rangle \leq \gamma$ be a cutting plane derived from $D$ for $F$. Then for any $\epsilon > 0$, there exists an $\epsilon$-approximate rotation $\langle c' , x \rangle \leq \gamma'$ of $\langle c , x \rangle \leq \gamma$ with respect to $\langle a, x \rangle = b$ such that $\langle c' , x \rangle \leq \gamma'$ is a cutting plane for $C$ derived from $D$.
\end{remark}

It is an open question to decide whether Theorem~\ref{thm:CP<=BB-II} extends to the case where $C$ is not required to be compact.
\medskip

Let us turn our attention to the independent set examples in Theorem~\ref{thm:CP<<BB} which show that cutting planes can be potentially much better in the 0/1 setting. These may seem a bit pathological because the disconnected components lead to a cartesian product of congruent polytopes, which is well-known to be a bad case for branch-and-bound. However, one can create other examples of the independent set problem which have exponential BB proofs of optimality that generalize these examples to connected graphs. We give two such constructions below, but many others are possible.

\begin{theorem}\label{thm:CP<<BB-II}
Given $m$ disjoint copies of $K_3$ (cliques of size $3$) and an extra vertex which is called the {\em center}, we arbitrarily add connections between the center and the other vertices to form a graph $G$ so that there is at most one vertex from each clique connected to the center. Then for $C=P(G)$ with objective $\sum_{v\in V}x_v$, $S= \Z^{3m+1}$ and $\D$ representing the family of all variable disjunctions, there is a cutting plane algorithm which solves the maximum independent set problem in at most $m$ iterations, but any branch-and-bound proof certifying an upper bound of $m+1$ on the optimal value has size at least $2^{m+1} - 2$ for all $m\geq 1$.
\end{theorem}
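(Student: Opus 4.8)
The plan is to adapt the two claims in the proof of Theorem~\ref{thm:CP<<BB} to this connected instance. For the cutting plane upper bound, observe that the extra \emph{center} vertex is irrelevant to bounding the objective: since the center is not part of any $K_3$, dropping the edges incident to it only enlarges the feasible region, so it suffices to derive the clique inequality $x_u+x_v+x_w\le 1$ on each of the $m$ copies of $K_3$, exactly as in Claim~\ref{claim:CP-fast}, using the variable disjunction $x_u\le 0$ or $x_u\ge 1$. Summing these $m$ inequalities with the bound $x_{\text{center}}\le 1$ (the trivial box constraint already present in $P(G)$) yields $\sum_{v}x_v\le m+1$ in $m$ iterations, and since $m+1$ is achievable (pick the center plus one vertex from each clique, which is independent by the ``at most one vertex per clique connected to the center'' hypothesis — actually one must be slightly careful here; see below), this solves the problem.

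For the branch-and-bound lower bound, I would reprove the claim by the same two-step argument: first bound the size of any BB proof that uses only feasible nodes, then argue infeasible nodes can be removed without increasing size. For a feasible node at depth $k<m$ reached by fixing $k_1$ variables to $1$ and $k_2=k-k_1$ variables to $0$, I want to lower-bound the LP value. The $k_1$ ones lie in distinct cliques (feasibility), and among them at most one is the center; the remaining cliques contribute their full LP value $3/2$ except where a vertex was zeroed. Carrying out the same arithmetic as before, the LP value at such a node is at least roughly $\frac{3m-k}{2}$ plus a possible contribution of $1$ from the center being available, which is strictly larger than $m+1$ when $k<m$ (one needs to check the center bookkeeping carefully, but the $3/2$-versus-$1$ gap per untouched clique drives everything). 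Hence every node with LP value $\le m+1$ sits at depth $\ge m$, forcing a complete binary tree of depth $m$ and at least $2^{m+1}-1$ nodes, i.e.\ proof size $\ge 2^{m+1}-2$. The infeasible-node contraction argument is identical to the one in Theorem~\ref{thm:CP<<BB}: an infeasible node forces two vertices of a common clique to $1$, its sibling then has a redundant zero-fixing and the same LP value as the parent, so the infeasible node can be pruned and parent/sibling contracted.

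The main obstacle is the depth-$k$ LP estimate, because the presence of the center vertex and its (arbitrary) edges complicates the clean counting that worked for disjoint $K_3$'s. One must track separately whether the center has been branched on, whether a clique-vertex adjacent to the center has been fixed to $1$ (which then forbids the center), and ensure the bound $\frac{3m-k}{2}$ (or the appropriate variant incorporating the $+1$ from the center) still exceeds $m+1$ strictly for $k<m$. I expect this to go through because fixing any variable — center or not — reduces the LP bound by at most $1$ (going from a clique's $3/2$ contribution, or the center's $1$ contribution, down), while the ``budget'' of untouched cliques each contributing $3/2$ rather than $1$ gives a surplus of $m/2$ over the naive bound of $m$, which comfortably absorbs the extra $+1$. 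For the second construction referenced in the paper's outline, the same template applies with the connectivity pattern changed, so I would state it as an analogous theorem and note that the proof is a routine modification.
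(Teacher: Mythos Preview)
Your cutting-plane argument matches the paper's: it simply says the CP algorithm is the same as in Theorem~\ref{thm:CP<<BB}. For the branch-and-bound lower bound, however, the paper takes a different and much shorter route than the direct depth-$k$ LP estimate you propose. Rather than tracking whether the center has been branched on and bookkeeping its contribution to the LP, the paper argues by reduction: if the BB tree never branches on the center, the argument of Theorem~\ref{thm:CP<<BB} applies unchanged; if the tree does branch on the center at some node, one deletes the $x_{\text{center}}=1$ subtree entirely and contracts that node with its $x_{\text{center}}=0$ child. Iterating this over all center-branchings produces a strictly smaller BB tree in which the center is never branched on, and one is back in the first case. This reduction is the key idea absent from your plan.

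The obstacle you flag is genuine, and your optimism that it ``goes through'' is not warranted as stated. If one of the $k_1$ vertices fixed to $1$ happens to be the center's neighbour in its clique, the edge constraint forces $x_{\text{center}}=0$ in the LP, so your lower bound collapses to $\frac{3m-k}{2}$ with no additive contribution from the center; at depth $k=m-1$ this gives only $m+\tfrac12$, already below the target $m+1$. Your heuristic that ``fixing any variable reduces the LP bound by at most $1$'' is correct, but fixing a center-neighbour to $1$ can reduce it by $\tfrac12$ (its own clique) plus up to another $\tfrac12$ (killing the center), so the surplus is not as comfortable as you suggest. A direct argument would therefore need further case analysis that you have not supplied; the paper's reduction sidesteps this bookkeeping.
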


\begin{proof} Observe that if the center is never branched on, then one can simply use the same argument as in the proof of Theorem~\ref{thm:CP<<BB}. Now suppose we have a BB tree where at some nodes, the center is branched upon. We can replace the entire subtree at all such nodes with the subtree corresponding to the branch where the center node is set to $0$. This is a smaller BB tree with no branchings on the center node. We can now again apply the proof of Theorem~\ref{thm:CP<<BB}.

The CP algorithm is the same as in Theorem~\ref{thm:CP<<BB}.
\end{proof}

The following theorem adds edges between the copies of the cliques themselves to make the graph connected.

\begin{theorem}\label{thm:CP<<BB-III} Given $m$ disjoint copies of $K_m$ (cliques of size $m$) and a constant $\alpha$, where $0\leq \alpha<1$, we arbitrarily add connections between cliques to form a graph $G$ so that at most $\alpha m$ nodes of each clique are connected to other cliques. Then for $C=P(G)$  with objective $\sum_{v\in V}x_v$, $S= \Z^{m^2}$ and $\D$ representing the family of all variable disjunctions, any branch-and-bound proof certifying an upper bound of $m$ on the optimal value has size at least $2^{m+1}-2$ for all $m\geq 3/(1-\alpha)$. {In contrast, there is a CP proof based on variable disjunctions with $O(m^4)$ length.} \end{theorem}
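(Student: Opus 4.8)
The statement has two independent halves: an $\Omega(2^m)$ lower bound on the size of any BB proof of $\sum_{v}x_v\le m$, and an $O(m^4)$ (in fact $O(m^2)$) upper bound on the length of a CP proof based on variable disjunctions. I would prove the first by an LP‑value estimate at shallow nodes, and the second by cheaply deriving each clique inequality.

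\textbf{BB lower bound.} First reduce to the case where every node of the proof tree is feasible, exactly as in the proof of Theorem~\ref{thm:CP<<BB}: an infeasible node occurs only when two adjacent vertices (in the same clique, or joined by a cross‑edge) have both been fixed to $1$ along the root path, and then its parent $N_0$ already satisfies $x_w\le 0$ (or $x_w\ge 1$) identically for the branching variable $x_w$, so one child of $N_0$ is empty and the sibling equals $N_0$; deleting the empty child and contracting produces a strictly smaller BB proof, so a minimum‑size proof is feasible‑only. The core estimate is a lower bound on the LP value $\max\{\sum_v x_v:x\in N\}$ at a feasible node $N$ of depth $k<m$. Along the path to $N$, say $k_1$ variables were fixed to $1$ and $k_2=k-k_1$ to $0$; feasibility forces the $k_1$ ones to form an independent set, hence to lie in $k_1$ distinct cliques (``used'') and to be pairwise non‑adjacent, so no cross‑edge joins two of them. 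I exhibit a feasible fractional point of $N$: keep the $k_1$ ones, set every other vertex of a used clique to $0$, and in each of the remaining $m-k_1$ (``free'') cliques set to $\tfrac12$ every vertex that is neither fixed to $0$ nor joined by a cross‑edge to one of the $k_1$ ones, the rest to $0$. Every edge is satisfied: within a clique two $\tfrac12$'s sum to $1$; a cross‑edge leaving a used clique meets either a $1$‑vertex (whose free‑clique neighbour was put in the $0$‑set) or a $0$‑vertex; and a cross‑edge between free cliques has both ends $\le\tfrac12$. Since each clique has at most $\alpha m$ externally‑connected vertices, a free clique with $z_i$ of its vertices fixed to $0$ contributes at least $(1-\alpha)m-z_i$ vertices at value $\tfrac12$, and $\sum_i z_i\le k_2$. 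Hence the LP value at $N$ is at least $k_1+\tfrac12\bigl((m-k_1)(1-\alpha)m-k_2\bigr)$, and using $k_2<m-k_1$ this is strictly larger than $k_1+\tfrac{m-k_1}{2}\bigl((1-\alpha)m-1\bigr)$, which is $\ge m$ exactly when $(1-\alpha)m\ge 3$, i.e. $m\ge 3/(1-\alpha)$. So for such $m$ no feasible node of depth $<m$ can be a leaf of a proof of $\sum_v x_v\le m$; every leaf has depth $\ge m$; since each branching node has two nonempty children the tree contains a complete binary tree through depth $m-1$ plus the depth‑$m$ children of its leaves, so it has at least $2^{m+1}-1$ nodes and size at least $2^{m+1}-2$.

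\textbf{CP upper bound.} I derive the clique inequality $\sum_{v\in C_i}x_v\le 1$ for each of the $m$ cliques $C_i$; once all these cuts are present in the relaxation, $\sum_v x_v=\sum_{i=1}^m\sum_{v\in C_i}x_v\le m$ holds automatically and no further cut is needed. Fix a clique, relabel its vertices $1,\dots,m$, and derive $x_1+\dots+x_j\le 1$ for $j=3,4,\dots,m$ in turn, each from the single variable disjunction $x_j\le 0$ versus $x_j\ge 1$ applied to the current relaxation: on the branch $x_j\le 0$ the inequality reduces to $x_1+\dots+x_{j-1}\le 1$, which is the previously derived cut (or the edge inequality $x_1+x_2\le 1$ when $j=3$); on the branch $x_j\ge 1$ the within‑clique edges $x_i+x_j\le 1$ together with $x_i\ge 0$ force $x_1=\dots=x_{j-1}=0$ while the box constraint gives $x_j\le 1$, so $x_1+\dots+x_j=x_j\le 1$. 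That is $m-2$ cuts per clique, hence $m(m-2)=O(m^2)$ cuts in total, comfortably within $O(m^4)$ (cross‑edges only shrink the relaxation, so they never obstruct validity of these cuts).

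\textbf{Main obstacle.} The delicate point is the BB estimate: one must check that the fractional point built at a feasible node is feasible across \emph{all} cross‑edges — which works precisely because feasibility of the node forbids a cross‑edge between two fixed $1$‑vertices, and because any free‑clique vertex adjacent to a fixed $1$‑vertex is one of the $\le\alpha m$ externally‑connected vertices of its clique and hence is safely dropped from the ``$\tfrac12$''‑set — and that the resulting bookkeeping is tight enough to produce the threshold $m\ge 3/(1-\alpha)$. The reduction to feasible‑only trees and the CP construction are routine adaptations of arguments already used for Theorem~\ref{thm:CP<<BB}.
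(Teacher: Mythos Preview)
Your branch-and-bound lower bound follows the paper's argument essentially verbatim: the same reduction to feasible-only trees (contracting an infeasible child whose sibling has the same LP region as the parent), the same half-integral witness point in the free cliques, and the same accounting that loses at most $\alpha m$ vertices per free clique to cross-edges and at most $k_2$ in total to $0$-fixings. Your algebra factors the bound through $k_2<m-k_1$ while the paper writes the closed form $\tfrac12\bigl((1-\alpha)m^2+(3-m+\alpha m)k_1-k\bigr)$ and minimizes over $k_1\le k\le m-1$; both yield the same threshold $m\ge 3/(1-\alpha)$ and the same $2^{m+1}-2$ count.

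Your cutting-plane half is a genuinely different and sharper construction. The paper uses Balas sequential convexification on each $K_m$: it shows that the $i$-th convexification $P_i$ is described by the $O(m^2)$ inequalities $\sum_{j\le i}x_j+x_u+x_v\le 1$ for $u,v>i$, derives \emph{all} of these at each of the $O(m)$ steps, and so spends $O(m^3)$ cuts per clique, $O(m^4)$ in total. You instead derive only the single inequality $x_1+\dots+x_j\le 1$ at step $j$, using the previously derived $x_1+\dots+x_{j-1}\le 1$ on the branch $x_j\le 0$ and the edge inequalities $x_i+x_j\le 1$ on the branch $x_j\ge 1$; that is $m-2$ cuts per clique and $O(m^2)$ cuts altogether. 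Both are valid CP proofs based on variable disjunctions; yours avoids carrying the full intermediate facet description and therefore beats the paper's stated $O(m^4)$ bound by a factor of $m^2$.
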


\begin{proof} Consider first any feasible node of the branch-and-bound tree at depth $k < m$. Consider the path from the root node to this node at depth $k$ and suppose that of $k$ nodes on this path that were branched on, $k_1$ were set to $1$ and $k_2$ were set to $0$ (so $k_1 + k_2 = k$). Feasibility implies that all the $k_1$ vertices set to $1$ are from different cliques. From the remaining $m - k_1$ cliques, $k_2$ vertices were set to $0$. In addition, due to the connections between cliques, the vertices from these $m-k_1$ cliques connected to the $k_1$ vertices can take value at most $0$ in the LP at this node. There can be at most $\alpha m(m-k_1)$ such vertices, so the LP value is at least $k_1 + \frac{(m-k_1)m - k_2}{2} - \frac{\alpha m(m-k_1)}{2} = \frac{(1-\alpha)m^2 + (3-m+\alpha m)k_1 - k}{2}$, which is a nonincreasing function of $k_1$ and $k$, when $m\geq 3/(1-\alpha)$. Using the fact that $k_1 \leq k \leq m-1$, this lower bound on the LP value is at least $m+\frac{(1-\alpha)m-2}{2} > m$ for $m\geq 3/(1-\alpha)$. Thus, if the branch-and-bound tree has only feasible nodes, only nodes at depth $m$ or more can have LP values at most $m$. Since every branching node has at least two children, this means we have at least $2^{m+1}-1$ nodes.

To deal with BB trees with infeasible nodes, we proceed as in the proof of Theorem~\ref{thm:CP<<BB}.

{The CP reasoning is as follows. Define $P_0$ to be the fractional stable set polytope $P(K_m)$, and define recursively $P_ i = \conv((P_{i-1} \cap \{x \in \R^m: x_i = 0\}) \cup (P_{i-1} \cap \{x \in \R^m: x_i = 1\})$, for $i=1, \ldots, m$. As $P_{0} \cap \{x \in \R^m: x_1 = 0\}$ is described by the constraints $x_1 = 0,\;x_u + x_v  \leq  1,\; \forall u,v>1$ and $P_{0} \cap \{x \in \R^m: x_1 = 1\}$ is the point $x_1=1, x_u=0, \forall u>1$, then $P_1$ is described by the inequalities $x_1+x_u+x_v\le 1, \; \forall u,v>1$ (i.e., clique inequalities associated to all the triangles containing 1). More generally, it can be shown that $P_i$ is described by the system:

\[
\begin{array}{rcll}
(\sum_{j=1}^i x_j) + x_u + x_v & \leq & 1  &   \quad\forall u, v > i \\

0 \leq x \leq 1 &&&
\end{array}
\]
Therefore, $P_{m-2}$ is the integer hull of $P(K_m)$ and each $P_i$ has an $O(m^2)$ sized description. The CP proof simply proceeds through each copy of $P(K_m)$, convexifying sequentially to produce all the inequalities needed to describe $P_0, P_1, \ldots, P_{m-2}$. Since each $P_i$ has an $O(m^2)$ description, and we do $O(m)$ steps of convexification, all the facets of the integer hull of $P(K_m)$ can be derived with an $O(m^3)$ length proof. We repeat this for the $m$ copies of $K_m$, giving a total length of $O(m^4)$.}\end{proof}

{Note that in Theorems~\ref{thm:CP<<BB} and~\ref{thm:CP<<BB-II}, we provide CP algorithms that solve the problems in polynomially many iterations; whereas, for Theorem~\ref{thm:CP<<BB-III}, we are only able to obtain a CP proof and the question of obtaining a CP algorithm for these instances that takes $\poly(m)$ iterations remains open.}

\section{Future avenues of research}

Some open questions are highlighted in Table \ref{tab:CP-BB-results}. Here we elaborate on this. When the dimension is variable, although our results clarify the relative strength of BB and CP proofs/algorithms for the case of variable disjunctions, the situation is not clear if general split disjunctions are allowed. It can be shown that the independent set instances of Theorem \ref{thm:CP<<BB} can be solved in a polynomial number of iterations with BB based on general split disjunctions. This is because they can be solved in polynomial time by Chv\'atal-Gomory cuts (in fact, the cuts used in Claim~\ref{claim:CP-fast} are Chv\'atal-Gomory cuts) and a CP proof based on Chv\'atal-Gomory cuts can be viewed as a BB proof based on general split disjunctions where one side of every branching is empty. See also~\cite{beame_et_al:LIPIcs:2018:8341} where Chva\'tal-Gomory cuts are simulated by BB based on general splits. Thus, these independent set instances cannot be used to show that cutting planes are much better than branch-and-bound. {However, there are other known instances constructed by Dobbs et al. where BB proofs based on general split disjunctions are necessarily exponential in size; see Theorem 4.11 in~\cite{dash2014lattice}. These could potentially provide similar gaps between CP and BB based on general split disjunctions, but it is unknown if there are polynomial sized CP proofs based on general split disjunctions for these instances.}  It is also possible that there are 0/1 instances where BB based on general split disjunctions can be shown to perform much better than CP, as there exist 0/1 polytopes for which any CP proof of infeasibility based on general split disjunctions has exponential length \cite{dash2010complexity}. Finally, for general polytopes, it is not clear whether there are examples where BB is much better than CP with general split disjunctions, as is the case for variable disjunctions (Theorems \ref{thm:BB<<CP} and \ref{thm:BB-con-CP-exp}).

When the dimension is fixed, a question that remains open is whether there are examples in which BB based on general split disjunctions takes a constant number of iterations while CP based on the same disjunctions requires a number of iterations that is exponential in the input size. (Note that this is the case when only variable disjunctions are allowed: see Theorem \ref{thm:BB-con-CP-exp}.) {In fact, nothing rules out the possibility of a CP algorithm based on split disjunctions that takes polynomial number of iterations in fixed dimensions.} This would complement Lenstra style algorithms for integer optimization in fixed dimensions. Such a CP algorithm has been found for 2 dimensions \cite{basu-split-2D}.

We also suspect that a related conjecture is true: the split rank of any rational polyhedron is bounded by a polynomial function of the input size, even for variable dimension. A fixed dimension CP algorithm would prove this latter conjecture for fixed dimensions. {It is well-known that the split rank of any two dimensional polyhedron is at most two; for instance, see~\cite{basu-split-2D}. To the best of our knowledge, it is not known if the conjecture is true even for three dimensional rational polyhedra.}

Another open question is related to Theorem \ref{thm:CP<=BB}, which asserts that cutting planes are always at least as good as branch-and-bound up to $\epsilon$-slack. Since, for any $\epsilon>0$, the CP proof has the same length as the BC proof, it is natural to wonder whether one can get the same result with $\epsilon=0$.

An intriguing open question comes out of the discussion at the very end of Section~\ref{sec:intro}. Can we make the difference between a CP/BB algorithm and CP/BB proof sharper, extending the insight from~\cite{owen2001disjunctive}? In particular, in the example from~\cite{owen2001disjunctive} CP algorithms do not converge to the optimal value, but there are finite length CP proofs of arbitrary accuracy; however, there are no finite cutting plane proofs of optimality making the example somewhat contrived. We pose a sharper version of the problem below.

Let $\D$ be a family of disjunctions such that there are finite length CP proofs of optimality (or infeasibility) for every instance. Is it true that for every instance in dimension $n \in \N$, the ratio of the smallest number of iterations of a CP algorithm and the smallest length of a CP proof is bounded by $\poly(n)$, or is there a family of instances such that this ratio is lower bounded by $\Omega(\exp(n))$? The same question goes for BB algorithms and proofs.

An answer to this question would give insight into the wisdom of the widespread practice of using only cutting planes that work on the optimal solution.

\subsection*{Acknowledgments}
Michele Conforti and Marco Di Summa were supported by the PRIN grant 2015B5F27W and by a SID grant of the University of Padova. Amitabh Basu and Hongyi Jiang gratefully acknowledge support from NSF Grant CMMI1452820 and ONR Grant N000141812096. {Daniel Dadush pointed out to us the example by Dobbs et al. from~\cite{dash2014lattice} that establishes an exponential lower bound on any BB proof based on general split disjunctions.} The manuscript also benefited greatly from discussions with Aleksandr Kazachkov, Andrea Lodi and Sriram Sankaranarayanan. Part of this work was done when the first author was visiting the Centre de Recherches Math\'ematiques (CRM) at Universit\'e de Montr\'eal, Canada as part of the Simons-CRM scholar-in-residence program. The visit and the research done was supported in part by funding from the Simons Foundation and the Centre de Recherches Math\'ematiques.

\bibliographystyle{plain}
\bibliography{../../full-bib}
\end{document}